\newtheorem{thm}{Theorem}[section]
\newtheorem{cor}[thm]{Corollary}
\newtheorem{lem}[thm]{Lemma}
\newtheorem{prop}[thm]{Proposition}
\newtheorem{ex}[thm]{Example}
\newtheorem{defn}[thm]{Definition}
\newtheorem{defns}[thm]{Definitions}
\newtheorem{rmk}[thm]{Remark}
\newtheorem{rems}[thm]{Remarks}
\newcommand{\ad}{\mathrm{ad}}
\begin{document}


\title{CONSTRUCTIONS AND $T^{\ast}$-EXTENSIONS OF 3-BIHOM-LIE SUPERALGEBRAS}


\author{ISMAIL LARAIEDH\thanks{D\'epartement de Math\'ematiques,
Facult\'e des Sciences de Sfax, BP 802, 3038 Sfax, Tunisie. E.mail:
Ismail.laraiedh@gmail.com and Departement of Mathematics, College of Sciences and Humanities - Kowaiyia, Shaqra University,
Kingdom of Saudi Arabia. E.mail:
ismail.laraiedh@su.edu.sa}}

\maketitle

\begin{abstract}
The aim of this paper is to generalise the construction of $3$-Bihom-Lie superalgebras and we provide
some properties can be lifted to its $T^{\ast}$-extensions such as nilpotency, solvability and decomposition. We study the representations, $T_{\theta}$-extensions
and $T^{\ast}_{\theta}$-extension of $3$-Bihom-Lie superalgebras and prove the necessary and sufficient conditions for
a $2n$-dimensional quadratic $3$-Bihom-Lie superalgebra to be isomorphic to a $T^{\ast}_{\theta}$-extension.


\end{abstract}
\maketitle {\bf Mathematics Subject Classification} (2010). 17A40, 17A45, 17B10, 17B70.

{\bf Key words } : $3$-Bihom-Lie superalgebras, representations, $T_{\theta}$-extensions, $T_{\theta}^{\ast}$-extensions

\bigskip
\thispagestyle{empty}

\section{INTRODUCTION}
The origin of Hom-structures can be found in the physics literature around 1900, appearing
in the study of quasideformations of Lie algebras of vector fields. n-ary Hom-type generalization of n-ary algebras
were introduced in \cite{H L S}. Derivations and generalized derivations of many varieties of
algebras and Hom-algebras were investigated in \cite{A M A, K2, L L, K N, Z K, Z C M, Z Z}. Then, hom-Lie algebras were generalized to hom-Lie superalgebras by Ammar
and Makhlouf \cite{F Ab, F Ab N}.

A Bihom-algebra is an algebra in such a way that the identities defining the structure
are twisted by two homomorphisms. This class of algebras was introduced from
a categorical approach in \cite{G M M P} as an extension of the class of Hom-algebras. When the
two linear maps are same, then Bihom-algebras will be return to Hom-algebras. The representation theory of Bihom-Lie algebras was introduced by Cheng in
\cite{C Q}, in which, Bihom-cochain complexes, derivation, central extension, derivation extension,
trivial representation and adjoint representation of Bihom-Lie algebras were studied. In particular, the definition of $n$-Bihom-Lie algebras was introduced in [11, 5, 6].

In 1997 Bordermann introduced the notion of $T^{\ast}$-extension of Lie algebras \cite{B}, which
is a workable extensional technique since it is a one-step procedure: it is one of the main
tools to prove that every symplectic quadratic Lie algebra is a special symplectic Manin
algebra \cite{B B M}. Many facts show that $T^{\ast}$-extension is an important method to
study algebraic structures \cite{B, B B M, L C S, L C M, Li C}.

In the present article, we study the $3-$Bihom-Lie superalgebras, which can
be viewed as an extension of $3-$Bihom-Lie algebras to $\mathbb{Z}_{2}$-graded algebras. Recently, the definition of Bihom-Lie superalgebras were introduced in \cite{W G}. More applications of the Bihom-Lie superalgebras, $3-$Bihom-Lie superalgebras can be found in
\cite{A S O, S S}

The paper proceds as follows. In Section 2, summarizes basic concepts and recall the definition of $3$-Bihom-Lie superalgebras, and show that a $3-$Bihom-Lie superalgebra is given by the direct sum of two
$3$-Bihom-Lie superalgebras and the tensor product of a $3$-totally Bihom-associative superalgebra
and a $3$-Bihom-Lie superalgebra. Also we prove that a homomorphism between $3$-Bihom-Lie
superalgebras is a morphism if and only if its graph is a $3-$Bihom subalgebra. In Section 3,
we give the definition of representations of $3$-Bihom-Lie superalgebras. We can obtain the semidirect product $3$-Bihom-Lie superalgebra $(\mathfrak{g}\oplus M, [\cdot, \cdot, \cdot]_{\rho}, \alpha+\alpha_M,\beta+\beta_M)$ associated with any representation $\rho$ of a $3$-Bihom-Lie superalgebra $(\mathfrak{g}, [\cdot, \cdot, \cdot], \alpha, \beta)$ on $M$. And we can get a $T_\theta$-extension of $(\mathfrak{g}, [\cdot, \cdot, \cdot], \alpha, \beta)$ by a $3$-cocycle $\theta$. In Section 4, $T_\theta^*$-extensions of $3$-Bihom-Lie superalgebras are studied. We give the necessary and sufficient
conditions for a $2n$-dimensional quadratic $3$-Bihom-Lie superalgebra to be isomorphic to a $T_\theta^*$-extension.

\section{DEFINITIONS AND PROPRIETIES}
\begin{defn}\cite{W G}
A Bihom-Lie superalgebra over a field $\mathbb{K}$ is a $4$-tuple $(\mathfrak g, [�\cdot, \cdot], \alpha,\beta)$ consisting
of a $\mathbb{Z}_2$-graded vector space $\mathfrak g=\mathfrak{g}_{0}\oplus \mathfrak{g}_{1}$, an even bilinear map $[\cdot,\cdot] : \mathfrak g\times\mathfrak g\longrightarrow\mathfrak g$ and two even endomorphisms $\alpha,\beta: \mathfrak g \rightarrow \mathfrak g$ satisfying the following
identities, $\forall x,y,z,u,v\in\mathfrak{g}$
\begin{enumerate}
\item
$\alpha\circ\beta=\beta\circ\alpha,$\\
\item
$\alpha([x,y])=[\alpha(x),\alpha(y)],\ \beta([x,y])=[\beta(x),\beta(y)],$\\
\item
$[\beta(x),\alpha(y)] = -(-1)^{|x||y|}[\beta(y), \alpha(x)],$\\
\item
$\displaystyle\circlearrowleft_{x,y,z}(-1)^{|x||z|}[\beta^2(x), [\beta(y),\alpha(z)]] = 0$.
\end{enumerate}
\end{defn}
\begin{thm}\label{induced-color}
Let $(\mathfrak{g},[.,.])$ be a Lie superalgebra and $\alpha,\beta:\mathfrak{g}\longrightarrow\mathfrak{g}$
be two algebra homomorphisms
such that $\alpha\circ\beta=\beta\circ\alpha$, then $(\mathfrak{g},[.,.]_{\alpha,\beta},\alpha,\beta)$ is a Bihom-Lie superalgebra, where
$[x,y]_{\alpha,\beta}=[\alpha(x),\beta(y)].$
\end{thm}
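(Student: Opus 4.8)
The plan is to verify directly that the $4$-tuple $(\mathfrak{g},[\cdot,\cdot]_{\alpha,\beta},\alpha,\beta)$ satisfies the four axioms of a Bihom-Lie superalgebra, using only that $(\mathfrak{g},[\cdot,\cdot])$ is a Lie superalgebra, that $\alpha,\beta$ are algebra homomorphisms, and that $\alpha\circ\beta=\beta\circ\alpha$. Axiom (1) is immediate since it is exactly the hypothesis $\alpha\circ\beta=\beta\circ\alpha$. For axiom (2), I would compute $\alpha([x,y]_{\alpha,\beta})=\alpha([\alpha(x),\beta(y)])=[\alpha^2(x),\alpha\beta(y)]$ using that $\alpha$ is a homomorphism, and compare with $[\alpha(x),\alpha(y)]_{\alpha,\beta}=[\alpha^2(x),\beta\alpha(y)]$; these agree precisely because $\alpha\beta=\beta\alpha$. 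The computation for $\beta$ is symmetric.

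For axiom (3), the graded skew-symmetry twisted by $\alpha,\beta$, I would expand $[\beta(x),\alpha(y)]_{\alpha,\beta}=[\alpha\beta(x),\beta\alpha(y)]$ and use the ordinary graded antisymmetry $[a,b]=-(-1)^{|a||b|}[b,a]$ of the Lie superbracket together with commutativity of $\alpha$ and $\beta$ to rewrite this as $-(-1)^{|x||y|}[\alpha\beta(y),\beta\alpha(x)]=-(-1)^{|x||y|}[\beta(y),\alpha(x)]_{\alpha,\beta}$; one checks the parities are preserved since $\alpha,\beta$ are even. The main work is axiom (4), the Bihom-Jacobi identity. Here I would write out $[\beta^2(x),[\beta(y),\alpha(z)]_{\alpha,\beta}]_{\alpha,\beta}$; the inner bracket is $[\alpha\beta(y),\beta\alpha(z)]$, so applying the outer twisted bracket gives $[\alpha\beta^2(x),\,\beta([\alpha\beta(y),\beta\alpha(z)])]=[\alpha\beta^2(x),\,[\alpha\beta^2(y),\beta^2\alpha(z)]]$, again pushing $\beta$ inside via the homomorphism property and rearranging $\alpha$'s and $\beta$'s by commutativity. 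After summing cyclically over $x,y,z$ with the signs $(-1)^{|x||z|}$, the expression becomes the image under $\alpha\beta^2$ (in each slot, in a suitably arranged way) of the ordinary graded Jacobi identity for $(\mathfrak{g},[\cdot,\cdot])$ applied to $\beta^2(x),\beta^2(y),\beta^2(z)$ — or more precisely, since all three linear maps commute, everything collapses to the cyclic sum $\circlearrowleft_{x,y,z}(-1)^{|x||z|}[\,\overline{x},[\overline{y},\overline{z}]\,]$ for appropriate twisted arguments $\overline{x}$, which vanishes by the super Jacobi identity of the underlying Lie superalgebra.

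The only genuine obstacle is bookkeeping in axiom (4): one must carefully track how each $\alpha$ and $\beta$ produced by expanding the two nested twisted brackets lands on each of the three arguments, and confirm that after using $\alpha\beta=\beta\alpha$ freely the three cyclic terms are exactly $\Phi$ applied to the three terms of a single instance of the classical graded Jacobi identity, for one common linear operator $\Phi$ (a monomial in $\alpha,\beta$) and one common triple of arguments. Since $\alpha$ and $\beta$ commute with each other and each is a bracket homomorphism, this reduction is purely formal, so no sign subtleties beyond those already present in the super Jacobi identity arise; I would present the computation for the first cyclic term in full and remark that the other two follow by the same manipulation, so that the cyclic sum vanishes termwise-comparably with the underlying identity.
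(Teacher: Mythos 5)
Your proposal is correct and follows essentially the same route as the paper: direct verification of the axioms, with the key step being that $[\beta^2(x),[\beta(y),\alpha(z)]_{\alpha,\beta}]_{\alpha,\beta}=[\alpha\beta^2(x),[\alpha\beta^2(y),\alpha\beta^2(z)]]=\alpha\beta^2([x,[y,z]])$, so the signed cyclic sum vanishes by the ordinary super Jacobi identity. Your write-up is in fact slightly more complete, since the paper dismisses the skew-symmetry as obvious and does not spell out the multiplicativity check for $\alpha$ and $\beta$.
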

\begin{proof} Obviously $[.,.]_{\alpha,\beta}$ is Bihom-superskewsymmetric. Furthermore $(\mathfrak{g},[.,.]_{\alpha,\beta},\alpha,\beta)$ satisfies the Bihom-super-Jacobi condition. Indeed  
\begin{eqnarray*}
   \circlearrowleft_{x,y,z}(-1)^{|z||x|}[\beta^2(x),[\beta(y),\alpha(z)]_{\alpha,\beta}]_{\alpha,\beta} &=&
   \circlearrowleft_{x,y,z}(-1)^{|z||x|}[\beta^2(x),[\alpha\beta(y),\beta\alpha(z)]]_{\alpha,\beta} \\
   &=& \circlearrowleft_{x,y,z}(-1)^{|z||x|}[\alpha\beta^2(x),[\alpha\beta^2(y),\alpha\beta^2(z)]] \\
 &=& \alpha\beta^2\circlearrowleft_{x,y,z}(-1)^{|z||x|}[x,[y,z]] \\
   &=&  0.
\end{eqnarray*}
\end{proof}
\begin{ex}\cite{Ka Ab} We consider in the sequel the matrix realization
of this Lie superalgebra.\\
 Let $osp(1, 2) = \mathfrak{g}_0 \oplus \mathfrak{g}_1$ be the Lie superalgebra where $\mathfrak{g}_0$ is spanned by
\begin{eqnarray*}
&& H=\left(
  \begin{array}{ccc}
    1 & 0 & 0 \\
    0 & 0 & 0 \\
    0 & 0 & -1 \\
  \end{array}
\right),~~X=\left(
  \begin{array}{ccc}
    0 & 0 & 1 \\
    0 & 0 & 0 \\
    0 & 0 & 0 \\
  \end{array}
\right),~~Y=\left(
  \begin{array}{ccc}
    0 & 0 & 0 \\
    0 & 0 & 0 \\
    1 & 0 & 0 \\
  \end{array}
\right),
\end{eqnarray*}
and $\mathfrak{g}_1$ is spanned by
\begin{eqnarray*}
&& F=\left(
  \begin{array}{ccc}
    0 & 0 & 0 \\
    1 & 0 & 0 \\
    0 & 1 & 0 \\
  \end{array}
\right),~~G=\left(
  \begin{array}{ccc}
    0 & 1 & 0 \\
    0 & 0 & -1 \\
    0 & 0 & 0 \\
  \end{array}
\right).
\end{eqnarray*}
The defining relations $($we give only the ones with non-zero values in the right-hand side$)$ are
\begin{eqnarray*}
&& [H,X]= 2X,\hskip1cm[H,Y] = -2Y,\hskip1cm [X,Y] = H,\\
&& [Y,G]= F,\hskip1cm[X,F] = G,\hskip1cm [H,F]=-F,\hskip1cm [H,G] = G,\\
&& [G,F]= H,\hskip1cm [G,G]=-2X,\hskip1cm [F,F]= 2Y.
\end{eqnarray*}
Let $\lambda,\mu \in \mathbb{R}^\ast$, we consider the linear maps $\alpha_\lambda: osp(1,2)\longrightarrow osp(1,2)$ and $\beta_\mu: osp(1,2)\longrightarrow osp(1,2)$ defined by
\begin{eqnarray*}
\alpha_\lambda(X)&=& \lambda^2 X,~~\alpha_\lambda(Y)=\frac{1}{\lambda^2}Y,~~\alpha_\lambda(H)=H,~~\alpha_\lambda(F)=\frac{1}{\lambda}F,~~ \alpha_\lambda(G)=\lambda G,\\
\beta_\mu(X)&=& \mu^2 X,~~\beta_\mu(Y)=\frac{1}{\mu^2}Y,~~\beta_\mu(H)=H,~~\beta_\mu(F)=\frac{1}{\mu}F,~~ \beta_\mu(G)=\mu G.
\end{eqnarray*}
Obviously, we have $\alpha_\lambda \circ \beta_\mu=\beta_\mu \circ \alpha_\lambda$. For all $H,X,Y,F$ and $G$ in $osp(1,2)$, we have
\begin{eqnarray*}
\alpha_\lambda([H,X])&=& \alpha_\lambda(2X)=2 \lambda^2 X,~~
\alpha_\lambda([H,Y])= \alpha_\lambda(-2Y)=-2\frac{1}{\lambda^2}Y,\\
\alpha_\lambda([X,Y])&=& \alpha_\lambda(H)=H,~~
\alpha_\lambda([Y,G])= \alpha_\lambda(F)=\frac{1}{\lambda}F,\\
\alpha_\lambda([X,F])&=& \alpha_\lambda(G)=\lambda G,~~
\alpha_\lambda([H,F])= \alpha_\lambda(-F)=-\frac{1}{\lambda}F,\\
\alpha_\lambda([H,G])&=& \alpha_\lambda(G)=\lambda G,~~
\alpha_\lambda([G,F])= \alpha_\lambda(H)=H,\\
\alpha_\lambda([G,G])&=& \alpha_\lambda(-2X)=-2\lambda^2 X,~~
\alpha_\lambda([F,F])= \alpha_\lambda(2Y)=2\frac{1}{\lambda^2}Y.
\end{eqnarray*}
On the other hand, we have
\begin{eqnarray*}
&&[\alpha_\lambda(H),\alpha_\lambda(X)]=[H,\lambda^2 X]=2\lambda^2 X,~~
[\alpha_\lambda(H),\alpha_\lambda(Y)]= [H,\frac{1}{\lambda^2}Y]=-2\frac{1}{\lambda^2}Y,\\
&&[\alpha_\lambda(X),\alpha_\lambda(Y)]=[\lambda^2 X,\frac{1}{\lambda^2}Y]=H,~~
[\alpha_\lambda(Y),\alpha_\lambda(G)]=[\frac{1}{\lambda^2}Y,\lambda G]=\frac{1}{\lambda}F,\\
&&[\alpha_\lambda(X),\alpha_\lambda(F)]=[\lambda^2 X,\frac{1}{\lambda}F]=\lambda G,~~
[\alpha_\lambda(H),\alpha_\lambda(F)]=[H,\frac{1}{\lambda}F]=-\frac{1}{\lambda}F,\\
&&[\alpha_\lambda(H),\alpha_\lambda(G)]=[H,\lambda G]=\lambda G,~~
[\alpha_\lambda(G),\alpha_\lambda(F)]= [\lambda G,\frac{1}{\lambda}F]=H,\\
&&[\alpha_\lambda(G),\alpha_\lambda(G)]=[\lambda G,\lambda G]=-2\lambda^2 X,~~
[\alpha_\lambda(F),\alpha_\lambda(F)]= [\frac{1}{\lambda}F,\frac{1}{\lambda}F]=2\frac{1}{\lambda^2}Y.
\end{eqnarray*}
Therefore, for $a,a^{'}\in osp(1,2)$, we have
\begin{eqnarray*}
\alpha_\lambda([a,a^{'}])&=& [\alpha_\lambda(a),\alpha_\lambda(a^{'})].
\end{eqnarray*}
Similarly, we have
\begin{eqnarray*}
\beta_\mu([a,a^{'}])&=& [\beta_\mu(a),\beta_\mu(a^{'})].
\end{eqnarray*}
Applying Theorem \ref{induced-color} we obtain a family of Bihom-Lie superalgebras\\ $osp(1,2)_{\alpha_\lambda,\beta_\mu} = \Big(osp(1,2),[.,.]_{\alpha_{\lambda},\beta_{\mu}}=[\cdot,\cdot]\circ (\alpha_\lambda \otimes \beta_\mu),\alpha_\lambda,\beta_\mu \Big)$ where the Bihom-Lie superalgebra bracket $[.,.]_{\alpha_{\lambda},\beta_{\mu}}$
 on the basis elements is given, for $\lambda,\mu \neq 0$, by
\begin{eqnarray*}
&& [H,X]_{\alpha_{\lambda},\beta_{\mu}}= 2\mu^{2}X,\hskip1cm [H,Y]_{\alpha_{\lambda},\beta_{\mu}} = \frac{-2}{\mu^{2}}Y,\hskip1cm [X,Y]_{\alpha_{\lambda},\beta_{\mu}} =(\frac{\lambda}{\mu})^2 H,\\
&& [Y,G]_{\alpha_{\lambda},\beta_{\mu}}=\frac{\mu}{\lambda^2} F,\hskip1cm [X,F]_{\alpha_{\lambda},\beta_{\mu}} =\frac{\lambda^2}{\mu} G,\hskip1cm [H,F]_{\alpha_{\lambda},\beta_{\mu}}=-\frac{1}{\mu}F,\\
&& [G,F]_{\alpha_{\lambda},\beta_{\mu}}=\frac{\lambda}{\mu} H,\hskip1cm [G,G]_{\alpha_{\lambda},\beta_{\mu}}=-2\lambda \mu X,\hskip1cm[F,F]_{\alpha_{\lambda},\beta_{\mu}}= 2\frac{\lambda}{\mu}Y,\\
&& [H,G]_{\alpha_{\lambda},\beta_{\mu}} =\mu G.
\end{eqnarray*}
\end{ex}

\begin{defn}\cite{A S O}
A $3$-Bihom Lie superalgebra over a field $\mathbb{K}$ is a quadruple $(\mathfrak{g},[.,.,.],\alpha,\beta),$ where consisting of a $\mathbb{Z}_{2}$-graded vector space
$\mathfrak{g}=\mathfrak{g}_{0}\oplus\mathfrak{g}_{1}$, an even trilinear map $[.,.,.]: \mathfrak{g}\otimes\mathfrak{g}\otimes\mathfrak{g}\longrightarrow\mathfrak{g}$ and two even endomorphisms $\alpha,\beta:\mathfrak{g}\longrightarrow\mathfrak{g}$ satisfying the following conditions, $\forall x,y,z,u,v\in\mathfrak{g}$
\begin{enumerate}
\item
$\alpha\circ\beta=\beta\circ\alpha,$
\item
$\alpha([x,y,z]_)=[\alpha(x),\alpha(y),\alpha(z)]~and~\beta([x,y,z])=[\beta(x),\beta(y),\beta(z)],$
\item
$[\beta(x),\beta(y),\alpha(z)]=-(-1)^{|x||y|}[\beta(y),\beta(x),\alpha(z)]=-(-1)^{|y||z|}[\beta(x),\beta(z),\alpha(y)],~~$($3-$Bihom-super-skewsymmetry).
\item
 $ $\\
$\begin{array}{lllllll}[\beta^{2}(x),\beta^{2}(y),[\beta(z),\beta(u),\alpha(v)]]&=&(-1)^{(|u|+|v|)(|x|+|y|+|z|)}[\beta^{2}(u),\beta^{2}(v),[\beta(x),\beta(y),\alpha(z)]]\\
&&-(-1)^{(|z|+|v|)(|x|+|y|)+|u||v|}[\beta^{2}(z),\beta^{2}(v),[\beta(x),\beta(y),\alpha(u)]]\\
&&+(-1)^{(|z|+|u|)(|x|+|y|)}[\beta^{2}(z),\beta^{2}(u),[\beta(x),\beta(y),\alpha(v)]],\end{array}$

($3-$Bihom-super-Jacobi identity).

\end{enumerate}
\end{defn}
\begin{rems}
\begin{enumerate}
\item
The $3-$Bihom-super-Jacobi identity is equivalent to
$$[\beta^{2}(x),\beta^{2}(y),[\beta(z),\beta(u),\alpha(v)]]=(-1)^{|z||v|}\circlearrowleft_{u,v,z}(-1)^{\gamma}[\beta^{2}(u),\beta^{2}(v),[\beta(x),\beta(y),\alpha(z)]],$$

where $\gamma=(|u|+|v|)(|x|+|y|)+|z||u|$
\item
A $3-$Lie superalgebra $(\mathfrak{g},[.,.,.])$ is a $3-$BiHom-Lie superalgebra with $\alpha=\beta=Id$, since the $3-$BiHom-super-Jacobi condition
reduces to the $3$-super-Jacobi condition when $\alpha=\beta=Id$.
\end{enumerate}
\end{rems}
\begin{defns}
\begin{enumerate}
\item
A $3$-Bihom Lie superalgebra $(\mathfrak{g},[.,.,.],\alpha,\beta)$ is regular if $\alpha$ and $\beta$ are an algebra automorphisms.
\item
A sub-vector space $\eta\subset \mathfrak{g}$ is a $3-$Bihom subalgebra of $(\mathfrak{g},[.,.,.],\alpha,\beta)$ if $\alpha(\eta)\subset \eta$,\\
$\beta(\eta)\subset\eta$ and $[\eta,\eta,\eta]\subseteq\eta$, (i.e.
$[x,y,z]\in\eta,~~\forall x,y,z\in\eta$).
\item
A sub-vector space $\eta\subset \mathfrak{g}$ is a Bihom ideal of $(\mathfrak{g},[.,.,.],\alpha,\beta)$ if $\alpha(\eta)\subset \eta$, $\beta(\eta)\subset\eta$\\
and $[\eta,\mathfrak{g},\mathfrak{g}]\subseteq\eta$, (i.e.
$[x,y,z]\in\eta,~~\forall x\in\eta;~y,z\in\mathfrak{g})$.
\end{enumerate}
\end{defns}
\begin{defns}
\begin{enumerate}
\item
The center of $(\mathfrak{g},[.,.,.],\alpha,\beta)$ is the set of $x\in\mathfrak{g}$ such that $[x,y_{1},y_{2}]=0$ for any $y_{1},y_{2}\in\mathfrak{g}$. The center is an ideal of $\mathfrak{g}$ which we will denote by $Z(\mathfrak{g})$.
\item
The $(\alpha,\beta)$-center of $(\mathfrak{g},[.,.,.],\alpha,\beta)$ is the set
$$Z({\alpha,\beta})=\{x\in\mathfrak{g},~[x,\alpha\beta(y_{1}),\alpha\beta(y_{2})]=0,~~for~any~y_{1},y_{2}\in\mathfrak{g}\}$$
\end{enumerate}

\end{defns}
\begin{prop}
Let $(\mathfrak{g},[.,.,.])$ be a $3$-Lie superalgebra and $\alpha,\beta:\mathfrak{g}\longrightarrow\mathfrak{g}$
be two algebra homomorphisms
such that $\alpha\circ\beta=\beta\circ\alpha$, then $(\mathfrak{g},[.,.,.]_{\alpha,\beta},\alpha,\beta)$ is a $3$-Bihom-Lie superalgebra, where
$[x,y,z]_{\alpha,\beta}=[\alpha(x),\alpha(y),\beta(z)]].$
\end{prop}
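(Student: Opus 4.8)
The plan is to mimic, at the ternary level, the verification carried out for Theorem~\ref{induced-color}: the twisted bracket $[x,y,z]_{\alpha,\beta}=[\alpha(x),\alpha(y),\beta(z)]$ inherits all four axioms of a $3$-Bihom-Lie superalgebra directly from the fact that $\alpha,\beta$ are commuting $3$-Lie superalgebra homomorphisms. First I would dispose of the easy structural items: axiom~(1) is the hypothesis $\alpha\circ\beta=\beta\circ\alpha$; for axiom~(2) one checks $\alpha([x,y,z]_{\alpha,\beta})=\alpha([\alpha(x),\alpha(y),\beta(z)])=[\alpha^2(x),\alpha^2(y),\alpha\beta(z)]=[\alpha(x),\alpha(y),\beta(z)]_{\alpha,\beta}\circ\alpha$ — using that $\alpha$ is a homomorphism and commutes with $\beta$ — and symmetrically for $\beta$.

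Next I would verify the $3$-Bihom-super-skewsymmetry (axiom~(3)). Here $[\beta(x),\beta(y),\alpha(z)]_{\alpha,\beta}=[\alpha\beta(x),\alpha\beta(y),\beta\alpha(z)]$, and since all three arguments carry the same "weight" $\alpha\beta$ (using commutativity to rewrite $\beta\alpha=\alpha\beta$), the ordinary skew-symmetry of the $3$-Lie superalgebra bracket in its first two slots, and the skew-symmetry relating slots two and three, translate verbatim into the two required identities with the correct Koszul signs $-(-1)^{|x||y|}$ and $-(-1)^{|y||z|}$.

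The main work — and the step I expect to be the only nontrivial bookkeeping — is the $3$-Bihom-super-Jacobi identity (axiom~(4)). The strategy is to expand every term on both sides of the claimed identity using $[a,b,c]_{\alpha,\beta}=[\alpha(a),\alpha(b),\beta(c)]$ twice (once for the inner bracket, once for the outer), push all the $\alpha$'s and $\beta$'s inward past the (genuine) $3$-Lie bracket using axiom~(2)-type homomorphism properties, and use $\alpha\beta=\beta\alpha$ to collect a common external operator. Concretely, each side should reduce to $\alpha^{?}\beta^{?}$ applied to a corresponding term of the \emph{untwisted} $3$-super-Jacobi identity for $(\mathfrak{g},[.,.,.])$ — one must check the exponents of $\alpha$ and $\beta$ match across all terms, which is where care is needed since the inner and outer arguments are weighted differently by $\beta^2,\beta^2,\beta,\beta,\alpha$ in the axiom. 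Once every term is seen to be the common operator applied to the same term appearing in the ordinary $3$-Lie superalgebra Jacobi identity (with identical signs, since the twisting introduces no new parity), the right-hand side minus left-hand side equals that operator applied to zero, hence vanishes. The Koszul signs require no adjustment because $\alpha,\beta$ are even, so $|\alpha(x)|=|\beta(x)|=|x|$ throughout.
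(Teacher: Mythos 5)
Your proposal is correct and follows essentially the same route as the paper: the structural axioms and the $3$-Bihom-super-skewsymmetry follow immediately from the homomorphism property, commutativity, and the total super-skewsymmetry of the original bracket, while for the $3$-Bihom-super-Jacobi identity both sides collapse to the common operator $\alpha\beta^{2}$ applied to the corresponding terms of the untwisted $3$-super-Jacobi identity, exactly as in the paper's computation (and in the proof of Theorem~\ref{induced-color}). The only cosmetic remark is the stray ``$\circ\alpha$'' in your axiom~(2) verification, which should simply read $[\alpha(x),\alpha(y),\alpha(z)]_{\alpha,\beta}$.
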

\begin{proof}
It is easy to see that, $\forall x,y,z\in\mathfrak{g}$ we have

$\alpha([x,y,z]_{\alpha,\beta})=[\alpha(x),\alpha(y),\alpha(z)]_{\alpha,\beta},$

$\beta([x,y,z]_{\alpha,\beta})=[\beta(x),\beta(y),\beta(z)]_{\alpha,\beta},$\\[0.2cm]
and\\

$[\beta(x),\beta(y),\alpha(z)]_{\alpha,\beta}=-(-1)^{|x||y|}[\beta(y),\beta(x),\alpha(z)]_{\alpha,\beta},$

$[\beta(x),\beta(y),\alpha(z)]_{\alpha,\beta}=-(-1)^{|y||z|}[\beta(x),\beta(z),\alpha(y)]_{\alpha,\beta}.$\\

Now, we prove the $3-$Bihom-super-Jacobi identity. Let $x,y,z,u,v\in\mathfrak{g}$,
$$\begin{array}{lllllll}[\beta^{2}(x),\beta^{2}(y),[\beta(z),\beta(u),\alpha(v)]_{\alpha,\beta}]_{\alpha,\beta}
&=&[\beta^{2}(x),\beta^{2}(y),[\alpha\beta(z),\alpha\beta(u),\alpha\beta(v)]]_{\alpha,\beta}\\
&=&[\alpha\beta^{2}(x),\alpha\beta^{2}(y),[\alpha\beta^{2}(z),\alpha\beta^{2}(u),\alpha\beta^{2}(v)]]\\
&=&\alpha\beta^{2}([x,y,[z,u,v]])\\
&=&\alpha\beta^{2}(-1)^{|z||v|}\circlearrowleft_{u,v,z}(-1)^{\gamma}[u,v,[x,y,z]]\\
&=&(-1)^{|z||v|}\circlearrowleft_{u,v,z}(-1)^{\gamma}[\beta^{2}(u),\beta^{2}(v),[\beta(x),\beta(y),\alpha(z)]_{\alpha,\beta}]_{\alpha,\beta},
\end{array}$$
where $\gamma=(|u|+|v|)(|x|+|y|)+|z||u|$.

Then the $3-$Bihom-super-Jacobi identity it satisfies.
\end{proof}
\begin{prop}\label{prop1}
Let $(\mathfrak{g},[.,.,.],\alpha,\beta)$ be a $3$-Bihom-Lie superalgebra, $\alpha',\beta':\mathfrak{g}\longrightarrow\mathfrak{g}$ be two even commuting algebra homomorphisms
and any two of the maps $\alpha,\beta,\alpha',\beta'$ commute. Then $(\mathfrak{g},[.,.,.]_{\alpha',\beta'}:=[.,.,.]\circ(\alpha'\otimes\alpha'\otimes\beta'),\alpha\circ\alpha',\beta\circ\beta')$ is a $3$-Bihom-Lie superalgebra.
\end{prop}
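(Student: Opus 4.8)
The plan is to check, one by one, the four axioms in the definition of a $3$-Bihom-Lie superalgebra for the quadruple $\big(\mathfrak{g},[.,.,.]_{\alpha',\beta'},\alpha\circ\alpha',\beta\circ\beta'\big)$, each time pushing the auxiliary homomorphisms $\alpha'$ and $\beta'$ through the original bracket and absorbing the resulting compositions into one substitution of the arguments. Write $\widetilde{\alpha}=\alpha\circ\alpha'$ and $\widetilde{\beta}=\beta\circ\beta'$; throughout one uses freely that any two of $\alpha,\beta,\alpha',\beta'$ commute and that $\alpha',\beta'$ are even, hence parity-preserving.

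Axiom (1) is immediate from pairwise commutativity: $\widetilde{\alpha}\circ\widetilde{\beta}=\alpha\alpha'\beta\beta'=\beta\beta'\alpha\alpha'=\widetilde{\beta}\circ\widetilde{\alpha}$. For axiom (2), expand $\widetilde{\alpha}\big([x,y,z]_{\alpha',\beta'}\big)=\alpha\alpha'\big[\alpha'(x),\alpha'(y),\beta'(z)\big]$; applying in turn that $\alpha'$ and then $\alpha$ are homomorphisms of $[.,.,.]$ and re-ordering commuting maps gives $\big[\alpha\alpha'\alpha'(x),\alpha\alpha'\alpha'(y),\alpha\alpha'\beta'(z)\big]=[\widetilde{\alpha}(x),\widetilde{\alpha}(y),\widetilde{\alpha}(z)]_{\alpha',\beta'}$, and the computation for $\widetilde{\beta}$ is identical. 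For axiom (3), observe that
$$[\widetilde{\beta}(x),\widetilde{\beta}(y),\widetilde{\alpha}(z)]_{\alpha',\beta'}=\big[\alpha'\beta\beta'(x),\alpha'\beta\beta'(y),\beta'\alpha\alpha'(z)\big]=\big[\beta(\alpha'\beta'(x)),\beta(\alpha'\beta'(y)),\alpha(\alpha'\beta'(z))\big],$$
so both $3$-Bihom-super-skewsymmetry relations for the new bracket are instances of those for $[.,.,.]$ at the arguments $\alpha'\beta'(x),\alpha'\beta'(y),\alpha'\beta'(z)$, with the same signs since $|\alpha'\beta'(x)|=|x|$, etc.

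The main work is axiom (4), the $3$-Bihom-super-Jacobi identity. The key claim is that, for all $x,y,z,u,v$,
$$\big[\widetilde{\beta}^{\,2}(x),\widetilde{\beta}^{\,2}(y),[\widetilde{\beta}(z),\widetilde{\beta}(u),\widetilde{\alpha}(v)]_{\alpha',\beta'}\big]_{\alpha',\beta'}=\big[\beta^{2}(X),\beta^{2}(Y),[\beta(Z),\beta(U),\alpha(V)]\big],$$
where $X=\alpha'(\beta')^{2}(x)$ and likewise $Y,Z,U,V$. To see this, first expand the inner new bracket as $\big[\beta\alpha'\beta'(z),\beta\alpha'\beta'(u),\alpha\alpha'\beta'(v)\big]$; the outer new bracket applies $\beta'$ to this third slot, and since $\beta'$ is a homomorphism this produces $\big[\beta\alpha'(\beta')^{2}(z),\beta\alpha'(\beta')^{2}(u),\alpha\alpha'(\beta')^{2}(v)\big]=[\beta(Z),\beta(U),\alpha(V)]$; meanwhile the first two slots become $\alpha'\widetilde{\beta}^{\,2}(x)=\beta^{2}\alpha'(\beta')^{2}(x)=\beta^{2}(X)$ and similarly $\beta^{2}(Y)$, proving the claim. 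Applying the very same computation with the roles of $(x,y)$ and $(z,u,v)$ permuted shows that each of the three terms on the right-hand side of the new Jacobi identity, evaluated at $x,y,z,u,v$, equals the corresponding term of the original Jacobi identity evaluated at $X,Y,Z,U,V$; since $\alpha',\beta'$ are even the parities $|X|=|x|,\dots$ are unchanged, so all the sign prefactors match. Thus the new $3$-Bihom-super-Jacobi identity at $x,y,z,u,v$ is exactly the original one at $X,Y,Z,U,V$, which holds by hypothesis.

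The only point requiring care is the bookkeeping of exponents: one must check that after composing the two twisted brackets the five arguments carry precisely the weights $(\beta^{2},\beta^{2},\beta,\beta,\alpha)$ demanded by the original Jacobi identity (with the auxiliary part $\alpha'(\beta')^{2}$ appearing uniformly on all five entries), and that no stray factor of $\alpha'$ or $\beta'$ is left in the wrong slot. There is no genuine obstacle: the hypothesis that $\alpha',\beta'$ are commuting algebra homomorphisms commuting with $\alpha$ and $\beta$ is exactly what permits sliding them freely through brackets and compositions, so each axiom reduces mechanically to the corresponding axiom for $(\mathfrak{g},[.,.,.],\alpha,\beta)$.
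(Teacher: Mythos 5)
Your proof is correct and follows essentially the same route as the paper: each axiom for the twisted structure is reduced to the corresponding axiom of $(\mathfrak{g},[.,.,.],\alpha,\beta)$ by sliding the commuting homomorphisms $\alpha',\beta'$ through the bracket. The only cosmetic difference is that you absorb the uniform factor $\alpha'\beta'$ (resp.\ $\alpha'(\beta')^{2}$) into the arguments, whereas the paper factors it outside the bracket before invoking the original skewsymmetry and Jacobi identities; you also verify the multiplicativity axiom explicitly, which the paper leaves implicit.
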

\begin{proof}
First we prove the $3$-Bihom-super-skewsymmetry. For any $x,y,z\in\mathfrak{g},$ we have
$$\begin{array}{lllllllll}[\beta\circ\beta'(x),\beta\circ\beta'(y),\alpha\circ\alpha'(z)]_{\alpha',\beta'}&=&[\alpha'\circ\beta\circ\beta'(x),\alpha'\circ\beta\circ\beta'(y),\beta'\circ\alpha\circ\alpha'(z)]\\
&=&\alpha'\circ\beta'([\beta(x),\beta(y),\alpha(z)])\\
&=&-(-1)^{|x||y|}\alpha'\circ\beta'([\beta(y),\beta(x),\alpha(z)])\\
&=&-(-1)^{|x||y|}[\alpha'\circ\beta'\circ\beta(y),\alpha'\circ\beta'\circ\beta(x),\alpha'\circ\beta'\circ\alpha(z)]\\
&=&-(-1)^{|x||y|}[\beta\circ\beta'(y),\beta\circ\beta'(x),\alpha\circ\alpha'(z)]_{\alpha',\beta'}\end{array}$$
In the same way, $[\beta\circ\beta'(x),\beta\circ\beta'(y),\alpha\circ\alpha'(z)]_{\alpha',\beta'}=-(-1)^{|y||z|}[\beta\circ\beta'(x),\beta\circ\beta'(z),\alpha\circ\alpha'(y)]_{\alpha',\beta'}$.

Now we prove the $3$-Bihom-super-Jacobi identity. For any $x,y,z,u,v\in\mathfrak{g}$, we have
$$\begin{array}{lllllll}&&[(\beta\circ\beta')^{2}(u),(\beta\circ\beta')^{2}(v),[(\beta\circ\beta')(x),\beta\circ\beta'(y),(\alpha\circ\alpha')(z)]_{\alpha',\beta'}]_{\alpha',\beta'}\\
&=&[(\beta\circ\beta')^{2}(u),(\beta\circ\beta')^{2}(v),\alpha'\circ\beta'([\beta(x),\beta(y),\alpha(z)]]_{\alpha',\beta'}\\
&=&[\alpha'\circ(\beta\circ\beta')^{2}(u),\alpha'\circ(\beta\circ\beta')^{2}(v),\alpha'\circ(\beta')^{2}([\beta(x),\beta(y),\alpha(z)])]\\
&=&\alpha'\circ(\beta')^{2}([\beta^{2}(u),\beta^{2}(v),[\beta(x),\beta(y),\alpha(z)]])\\
&=&(-1)^{|x||z|}\circlearrowleft_{y,z,x}(-1)^{\gamma}\alpha'\circ(\beta')^{2}[\beta^{2}(y),\beta^{2}(z),[\beta(u),\beta(v),\alpha(x)]]\\
&=&(-1)^{|x||z|}\circlearrowleft_{y,z,x}(-1)^{\gamma}[(\beta\circ\beta')^{2}(y),(\beta\circ\beta')^{2}(z),[\beta\circ\beta'(u),\beta\circ\beta'(v),\alpha\circ\alpha'(x)]_{\alpha',\beta'}]_{\alpha',\beta'}\\
\end{array}$$
Thus $(\mathfrak{g},[.,.,.]_{\alpha',\beta'}:=[.,.,.]\circ(\alpha'\otimes\alpha'\otimes\beta'),\alpha\circ\alpha',\beta\circ\beta')$ is a $3$-Bihom-Lie superalgebra.
\end{proof}
\begin{cor}
Let $(\mathfrak{g},[.,.,.],\alpha,\beta)$ a  $3$-Bihom-Lie superalgebra. Then $(\mathfrak{g},[.,.,.]_{k}:=[.,.,.]\circ(\alpha^{k}\otimes\alpha^{k}\otimes\beta^{k}),\alpha^{k+1},\beta^{k+1})$ is a $3$-Bihom-Lie superalgebra.
\end{cor}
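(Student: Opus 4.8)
The plan is to obtain this as an immediate specialisation of Proposition \ref{prop1}. I would apply that proposition to the given $3$-Bihom-Lie superalgebra $(\mathfrak{g},[.,.,.],\alpha,\beta)$ with the auxiliary homomorphisms chosen to be $\alpha':=\alpha^{k}$ and $\beta':=\beta^{k}$. Its conclusion then asserts that $\big(\mathfrak{g},[.,.,.]\circ(\alpha'\otimes\alpha'\otimes\beta'),\alpha\circ\alpha',\beta\circ\beta'\big)$ is a $3$-Bihom-Lie superalgebra; substituting $\alpha\circ\alpha'=\alpha^{k+1}$, $\beta\circ\beta'=\beta^{k+1}$ and $[.,.,.]\circ(\alpha^{k}\otimes\alpha^{k}\otimes\beta^{k})=[.,.,.]_{k}$ gives precisely the statement of the corollary. (The case $k=0$ is just the original structure, so one may assume $k\ge 1$.)

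It then only remains to check that $\alpha'$ and $\beta'$ satisfy the hypotheses of Proposition \ref{prop1}. They are even endomorphisms, being compositions of the even maps $\alpha$ and $\beta$. They are algebra homomorphisms for $[.,.,.]$: condition (2) in the definition of a $3$-Bihom-Lie superalgebra says exactly that $\alpha$ and $\beta$ preserve the bracket, and a composition of bracket-preserving maps preserves the bracket, so $\alpha^{k}$ and $\beta^{k}$ do. Finally, from $\alpha\circ\beta=\beta\circ\alpha$ a trivial induction shows $\alpha^{i}\circ\beta^{j}=\beta^{j}\circ\alpha^{i}$ for all $i,j\ge 0$; in particular $\alpha'\circ\beta'=\beta'\circ\alpha'$ and any two of the four maps $\alpha,\beta,\alpha',\beta'$ commute. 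This propagation of commutativity to all powers is the only substantive point, and it is the (very mild) crux of the argument; everything else is formal.

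As an alternative I would note that the result also follows by induction on $k$: the base case $k=0$ is the hypothesis, and the inductive step applies Proposition \ref{prop1} to $(\mathfrak{g},[.,.,.]_{k},\alpha^{k+1},\beta^{k+1})$ with $\alpha'=\alpha$, $\beta'=\beta$, using that $\alpha,\beta$ are algebra homomorphisms for $[.,.,.]_{k}$ (again a one-line check from the commutativity of $\alpha$ and $\beta$), which yields $[.,.,.]_{k}\circ(\alpha\otimes\alpha\otimes\beta)=[.,.,.]_{k+1}$ with twists $\alpha^{k+2},\beta^{k+2}$. I expect no real obstacle here: the entire content is the observation that powers of commuting algebra homomorphisms are again commuting algebra homomorphisms, after which Proposition \ref{prop1} does all the work.
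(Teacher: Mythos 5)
Your proposal is correct and is essentially the paper's own proof: the paper likewise obtains the corollary by applying Proposition \ref{prop1} with $\alpha'=\alpha^{k}$ and $\beta'=\beta^{k}$. The extra verifications you include (that powers of the commuting homomorphisms $\alpha,\beta$ are again commuting algebra homomorphisms) are the same routine checks the paper leaves implicit.
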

\begin{proof}
Apply Proposition \ref{prop1} with $\alpha'=\alpha^{k}$ and $\beta'=\beta^{k}$.
\end{proof}
\begin{defn}
A $3$-Totally Bihom-associative superalgebra is a quadruple $(\mathcal{A},\mu,\alpha,\beta),$ consisting of a linear space $\mathcal{A}$, an even trilinear map $\mu:\mathcal{A}\otimes\mathcal{A}\otimes\mathcal{A}$ and two even linear maps $\alpha,\beta:\mathcal{A}\longrightarrow\mathcal{A}$ satisfying the following conditions, $\forall a_{1},a_{2},a_{3},a_{4},a_{5}\in\mathcal{A}$,
\begin{enumerate}
\item
$\alpha\circ\beta=\beta\circ\alpha$,
\item
$\alpha(\mu(a_{1},a_{2},a_{3}))=\mu(\alpha(a_{1}),\alpha(a_{2}),\alpha(a_{3}))~ and ~\beta(\mu(a_{1},a_{2},a_{3}))=\mu(\beta(a_{1}),\beta(a_{2}),\beta(a_{3}))$
\item
$3$-Total Bihom-associativity: $\forall i,j:~1\leq i,j \leq 3$
\begin{equation}\label{10}\begin{array}{llllllll}
&&\mu(\alpha(a_{1}),\cdots,\alpha(a_{i-1}),\mu(a_{i},\cdots,a_{2+i}),\beta(a_{3+i}),\cdots,\beta(a_{5}))\\
&=&\mu(\alpha(a_{1}),\cdots,\alpha(a_{j-1}),\mu(a_{j},\cdots,a_{2+j}),\beta(a_{3+j}),\cdots,\beta(a_{5}))\end{array}\end{equation}
\end{enumerate}
\end{defn}
\begin{rmk}
The equation(\ref{10}) is equivalent to
$$\mu(\mu(a_{1},a_{2},a_{3}),\beta(a_{4}),\beta(a_{5}))=\mu(\alpha(a_{1}),\mu(a_{2},a_{3},a_{4}),\beta(a_{5}))=\mu(\alpha(a_{1}),\alpha(a_{2}),\mu(a_{3},a_{4},a_{5}))$$
\end{rmk}
\begin{defn}
A $3$-Partially Bihom-associative superalgebra is a quadruple $(\mathcal{A},\mu,\alpha,\beta),$ consisting of a linear space $\mathcal{A}$, an even trilinear map $\mu:\mathcal{A}\otimes\mathcal{A}\otimes\mathcal{A}$ and two even linear maps $\alpha,\beta:\mathcal{A}\longrightarrow\mathcal{A}$ satisfying the following conditions, $\forall a_{1},a_{2},a_{3},a_{4},a_{5}\in\mathcal{A}$,
\begin{enumerate}
\item
$\alpha\circ\beta=\beta\circ\alpha$,
\item
$\alpha(\mu(a_{1},a_{2},a_{3}))=\mu(\alpha(a_{1}),\alpha(a_{2}),\alpha(a_{3}))~ and ~\alpha(\mu(a_{1},a_{2},a_{3}))=\mu(\alpha(a_{1}),\alpha(a_{2}),\alpha(a_{3}))$
\item
$3$-Partial Bihom-associativity: $\forall i,j:~1\leq i,j \leq 3$
\begin{equation}
\sum_{i=1}^{3}\mu(\alpha(a_{1}),\cdots,\alpha(a_{i-1}),\mu(a_{i},\cdots,a_{2+i}),\beta(a_{3+i}),\cdots,\beta(a_{5}))=0
\end{equation}
\end{enumerate}
\end{defn}
\begin{prop}
Let $(\mathcal{A},\mu,\alpha_{1},\beta_{1})$ be a $3$-totally Bihom-associative superalgebra and $(\mathfrak{g},[.,.,.],\alpha_{2},\beta_{2})$ a $3$-Bihom-Lie superalgebra.
If $\alpha_{1}$ is surjective and $\forall a_{1},a_{2},a_{3}\in\mathcal{A},$
\begin{equation}\label{eq}\mu(\beta_{1}(a_{1}),\beta_{1}(a_{2}),\alpha_{1}(a_{3}))=\mu(\beta_{1}(a_{2}),\beta_{1}(a_{1}),\alpha_{1}(a_{3}))=\mu(\beta_{1}(a_{1}),\beta_{1}(a_{3}),\alpha_{1}(a_{2})).\end{equation}
Then $(\mathcal{A}\otimes\mathfrak{g},[.,.,.]_{\mathcal{A}\otimes\mathfrak{g}},\alpha,\beta)$ is a $3$-Bihom-Lie superalgebra, where the trilinear map $[.,.,.]_{\mathcal{A}\otimes\mathfrak{g}}:\wedge^{3}(\mathcal{A}\otimes\mathfrak{g})\longrightarrow\mathcal{A}\otimes\mathfrak{g}$ are given by $$[a_{1}\otimes x_{1},a_{2}\otimes x_{2},a_{3}\otimes x_{3}]_{\mathcal{A}\otimes\mathfrak{g}}=\mu(a_{1},a_{2},a_{3})\otimes[x_{1},x_{2},x_{3}],~\forall a_{i}\in\mathcal{A},~x_{i}\in\mathfrak{g},~i=1,2,3,$$
and the two linear maps $\alpha,\beta:\mathcal{A}\otimes\mathfrak{g}\longrightarrow\mathcal{A}\otimes\mathfrak{g}$ are given by $\alpha(a_{1}\otimes x_{1})=\alpha_{1}(a_{1})\otimes \alpha_{2}(x_{1})$ and $\beta(a_{1}\otimes x_{1})=\beta_{1}(a_{1})\otimes \beta_{2}(x_{1})$.
\end{prop}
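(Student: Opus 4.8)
The plan is to verify, in turn, the four defining axioms of a $3$-Bihom-Lie superalgebra for $(\mathcal{A}\otimes\mathfrak{g},[.,.,.]_{\mathcal{A}\otimes\mathfrak{g}},\alpha,\beta)$; since the bracket is trilinear and $\alpha,\beta$ are linear, it suffices to check everything on homogeneous simple tensors $a\otimes x$, with $|a\otimes x|=|a|+|x|$. Axiom $(1)$, $\alpha\circ\beta=\beta\circ\alpha$, is immediate from $\alpha_{1}\circ\beta_{1}=\beta_{1}\circ\alpha_{1}$ and $\alpha_{2}\circ\beta_{2}=\beta_{2}\circ\alpha_{2}$ applied factorwise. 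Axiom $(2)$ follows the same way: $\alpha\big([a_{1}\otimes x_{1},a_{2}\otimes x_{2},a_{3}\otimes x_{3}]_{\mathcal{A}\otimes\mathfrak{g}}\big)=\alpha_{1}(\mu(a_{1},a_{2},a_{3}))\otimes\alpha_{2}([x_{1},x_{2},x_{3}])$, which equals $[\alpha(a_{1}\otimes x_{1}),\alpha(a_{2}\otimes x_{2}),\alpha(a_{3}\otimes x_{3})]_{\mathcal{A}\otimes\mathfrak{g}}$ because $\alpha_{1}$ is multiplicative for $\mu$ and $\alpha_{2}$ for the bracket; and likewise for $\beta$.

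For axiom $(3)$, the $3$-Bihom-super-skewsymmetry, I would compute on simple tensors $[\beta(a_{1}\otimes x_{1}),\beta(a_{2}\otimes x_{2}),\alpha(a_{3}\otimes x_{3})]_{\mathcal{A}\otimes\mathfrak{g}}=\mu(\beta_{1}(a_{1}),\beta_{1}(a_{2}),\alpha_{1}(a_{3}))\otimes[\beta_{2}(x_{1}),\beta_{2}(x_{2}),\alpha_{2}(x_{3})]$, then apply hypothesis $(\ref{eq})$ to interchange the first two (respectively the last two) arguments in the $\mathcal{A}$-factor while applying the $3$-Bihom-super-skewsymmetry of $(\mathfrak{g},[.,.,.],\alpha_{2},\beta_{2})$ to the $\mathfrak{g}$-factor; the Koszul sign produced on the $\mathfrak{g}$-side is exactly the one demanded by the $3$-Bihom-super-skewsymmetry of $\mathcal{A}\otimes\mathfrak{g}$. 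This is a routine check.

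The real content is axiom $(4)$, which I would verify in the cyclic form of the $3$-Bihom-super-Jacobi identity recorded in the Remarks after the definition. Evaluating both sides on simple tensors $a_{x}\otimes x,\dots,a_{v}\otimes v$, every term factors as an $\mathcal{A}$-part tensored with a $\mathfrak{g}$-part, and the $\mathfrak{g}$-parts by themselves already satisfy the $3$-Bihom-super-Jacobi identity of $\mathfrak{g}$, with the same cyclic sum and signs. Hence it is enough to show that the $\mathcal{A}$-part of the left-hand term, $\mu(\beta_{1}^{2}(a_{x}),\beta_{1}^{2}(a_{y}),\mu(\beta_{1}(a_{z}),\beta_{1}(a_{u}),\alpha_{1}(a_{v})))$, coincides with the $\mathcal{A}$-part of each of the three right-hand terms, $\mu(\beta_{1}^{2}(a_{u}),\beta_{1}^{2}(a_{v}),\mu(\beta_{1}(a_{x}),\beta_{1}(a_{y}),\alpha_{1}(a_{z})))$ and its two cyclic shifts in $(u,v,z)$; once this is known, the common $\mathcal{A}$-factor pulls out of the whole identity and the $\mathfrak{g}$-Jacobi identity closes the argument. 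To establish this coincidence I would first use the surjectivity of $\alpha_{1}$ to promote $(\ref{eq})$ to the unconditional symmetry $\mu(\beta_{1}(p),\beta_{1}(q),c)=\mu(\beta_{1}(q),\beta_{1}(p),c)$ (valid for all $p,q,c\in\mathcal{A}$, since every $c$ is of the form $\alpha_{1}(a_{3})$), so that $\mu(\beta_{1}^{2}(-),\beta_{1}^{2}(-),-)$ is symmetric in its first two slots and $\mu(\beta_{1}(-),\beta_{1}(-),\alpha_{1}(-))$ is totally symmetric; then I would combine this with $3$-total Bihom-associativity in its equivalent form $\mu(\mu(a_{1},a_{2},a_{3}),\beta_{1}(a_{4}),\beta_{1}(a_{5}))=\mu(\alpha_{1}(a_{1}),\mu(a_{2},a_{3},a_{4}),\beta_{1}(a_{5}))=\mu(\alpha_{1}(a_{1}),\alpha_{1}(a_{2}),\mu(a_{3},a_{4},a_{5}))$, using surjectivity of $\alpha_{1}$ once more to write $\beta_{1}^{2}(a_{x}),\beta_{1}^{2}(a_{y})$ in the image of $\alpha_{1}$ so that these relations apply and let one shuttle the inner triple in and out of the outer product. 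Chaining these rewrites shows that $\mu(\beta_{1}^{2}(-),\beta_{1}^{2}(-),\mu(\beta_{1}(-),\beta_{1}(-),\alpha_{1}(-)))$ is invariant under exactly the permutations of its five arguments that relate the four terms above.

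The step I expect to be the main obstacle is precisely this last one: tracking, purely inside the $\mathcal{A}$-factor, how $(\ref{eq})$ (made unconditional via surjectivity of $\alpha_{1}$) and the instances of $3$-total Bihom-associativity interlock to yield the required symmetry of the doubly nested $\mu$-product --- surjectivity of $\alpha_{1}$ being exactly what makes both ingredients applicable here, and the only hypothesis beyond the underlying algebra structures. Everything else --- axioms $(1)$--$(3)$ and the bookkeeping of Koszul signs --- is direct and entirely parallel to the proofs of the earlier propositions in this section.
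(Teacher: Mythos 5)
Your proposal is correct and follows essentially the same route as the paper: axioms (1)--(3) are verified factorwise on simple tensors, and for the $3$-Bihom-super-Jacobi identity each term is written as an $\mathcal{A}$-factor tensored with a $\mathfrak{g}$-factor, the $\mathcal{A}$-factors of all four nested products are identified so that a common factor pulls out, and the Jacobi identity of $(\mathfrak{g},[.,.,.],\alpha_{2},\beta_{2})$ closes the argument. If anything, you are more explicit than the paper at the decisive point: the paper justifies the coincidence of the $\mathcal{A}$-factors by citing only the symmetry condition (\ref{eq}), whereas you correctly flag that the surjectivity of $\alpha_{1}$ together with the $3$-total Bihom-associativity is what makes that rewriting of the doubly nested $\mu$-products legitimate.
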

\begin{proof}
Since $\alpha_{1}\circ\beta_{1}=\beta_{1}\circ\alpha_{1}$ and $\alpha_{2}\circ\beta_{2}=\beta_{2}\circ\alpha_{2}$, we have $\alpha\circ\beta=\beta\circ\alpha$.

First we prove $[.,.,.]_{\mathcal{A}\otimes\mathfrak{g}}$ satisfies Bihom-super-skewsymmetry. For $a_{i}\in\mathcal{A},~x_{i}\in\mathfrak{g},~i=1,2,3,$ we have
$$\begin{array}{llllllll}
&&[\beta(a_{1}\otimes x_{1}),\beta(a_{2}\otimes x_{2}),\alpha(a_{3}\otimes x_{3})]_{\mathcal{A}\otimes\mathfrak{g}}\\
&=&[\beta_{1}(a_{1})\otimes \beta_{2}(x_{1}),\beta_{1}(a_{2})\otimes \beta_{2}(x_{2}),\alpha_{1}(a_{3})\otimes \alpha_{2}(x_{3})]_{\mathcal{A}\otimes\mathfrak{g}}\\
&=&\mu(\beta_{1}(a_{1}),\beta_{1}(a_{2}),\alpha_{1}(a_{3}))[\beta_{2}(x_{1}),\beta_{2}(x_{2}),\alpha_{2}(x_{3})]\\
&=&-(-1)^{|x_{1}||x_{2}|}\mu(\beta_{1}(a_{2}),\beta_{1}(a_{1}),\alpha_{1}(a_{3}))[\beta_{2}(x_{2}),\beta_{2}(x_{1}),\alpha_{2}(x_{3})]\\
&=&-(-1)^{|x_{1}||x_{2}|}[\beta(a_{2}\otimes x_{2}),\beta(a_{1}\otimes x_{1}),\alpha(a_{3}\otimes x_{3})]_{\mathcal{A}\otimes\mathfrak{g}}
\end{array}$$
In the same way,
$$[\beta(a_{1}\otimes x_{1}),\beta(a_{2}\otimes x_{2}),\alpha(a_{3}\otimes x_{3})]_{\mathcal{A}\otimes\mathfrak{g}}=-(-1)^{|x_{2}||x_{3}|}[\beta(a_{1}\otimes x_{1}),\beta(a_{3}\otimes x_{3}),\alpha(a_{2}\otimes x_{2})]_{\mathcal{A}\otimes\mathfrak{g}}$$

Now, we prove the $3-$Bihom-super-Jacobi identity. Let $a_{i}\in\mathcal{A},~x_{i}\in\mathfrak{g},~i=1,2,3,$
$$\begin{array}{llllllllll}
&&(-1)^{(|x_{4}|+|x_{5}|)(|x_{1}|+|x_{2}|+|x_{3}|)}\times\\&&[\beta^2(a_4\otimes x_4), \beta^2(a_5\otimes x_5), [\beta(a_1\otimes x_1), \beta(a_2\otimes x_2), \alpha(a_3\otimes x_3)]_{A\otimes \mathfrak{g}}]_{A\otimes \mathfrak{g}}\\
&&-(-1)^{(|x_{3}|+|x_{5}|)(|x_{1}|+|x_{2}|)+|x_{4}||x_{5}|}\times\\&&[\beta^2(a_3\otimes x_3), \beta^2(a_5\otimes x_5), [\beta(a_1\otimes x_1), \beta(a_2\otimes x_2), \alpha(a_4\otimes x_4)]_{A\otimes \mathfrak{g}}]_{A\otimes \mathfrak{g}}\\
&&+(-1)^{(|x_{3}|+|x_{4}|)(|x_{1}|+|x_{2}|)}\times\\&&[\beta^2(a_3\otimes x_3), \beta^2(a_4\otimes x_4), [\beta(a_1\otimes x_1), \beta(a_2\otimes x_2), \alpha(a_5\otimes x_5)]_{A\otimes L}]_{A\otimes \mathfrak{g}}\\
&=&(-1)^{(|x_{4}|+|x_{5}|)(|x_{1}|+|x_{2}|+|x_{3}|)}\times\\&&\mu(\beta_{1}^{2}(a_4),\beta_{1}^{2}(a_5),\mu\big(\beta_{1}(a_1),\beta_{1}(a_2),\alpha_{1}(a_3)\big)\big)\otimes[\beta_{2}^{2}(x_4), \beta_{2}^{2}(x_5), [\beta_{2}(x_1), \beta_{2}(x_2), \alpha_{2}(x_3)]]\\
&&-(-1)^{(|x_{3}|+|x_{5}|)(|x_{1}|+|x_{2}|)+|x_{4}||x_{5}|}\times\\&&\mu\big(\beta_{1}^{2}(a_3),\beta_{1}^{2}(a_5),\mu\big(\beta_{1}(a_1),\beta_{1}(a_2),\alpha_{1}(a_4)\big)\big)\otimes[\beta_{2}^{2}(x_3), \beta_{2}^{2}(x_5), [\beta_{2}(x_1), \beta_{2}(x_2), \alpha_{2}(x_4)]]\\
&&+(-1)^{(|x_{3}|+|x_{4}|)(|x_{1}|+|x_{2}|)}\times\\&&\mu\big(\beta_{1}^{2}(a_3),\beta_{1}^{2}(a_4),\mu\big(\beta_{1}(a_1)\beta_{1}(a_2)\alpha_{1}(a_5)\big)\otimes[\beta_{2}^{2}(x_3)), \beta_{2}^{2}(x_4), [\beta_{2}(x_1), \beta_{2}(x_2), \alpha_{2}(x_5)]]\\
&=&\mu\big(\beta_{1}^{2}(a_1),\beta_{1}^{2}(a_2),\mu\big(\beta_{1}(a_3),\beta_{1}(a_4),\alpha_{1}(a_5)\big)\big)\otimes\\&&\big((-1)^{(|x_{4}|+|x_{5}|)(|x_{1}|+|x_{2}|+|x_{3}|)}[\beta_{2}^{2}(x_4), \beta_{2}^{2}(x_5), [\beta_{2}(x_1), \beta_{2}(x_2), \alpha_{2}(x_3)]]\\
&&-(-1)^{(|x_{3}|+|x_{5}|)(|x_{1}|+|x_{2}|)+|x_{4}||x_{5}|}[\beta_{2}^{2}(x_3), \beta_{2}^{2}(x_5), [\beta_{2}(x_1), \beta_{2}(x_2), \alpha_{2}(x_4)]]\\&&(-1)^{(|x_{3}|+|x_{4}|)(|x_{1}|+|x_{2}|)}[\beta_{2}^{2}(x_3), \beta_{2}^{2}(x_4), [\beta_{2}(x_1), \beta_{2}(x_2), \alpha_{2}(x_5)]]\big)\\
&=&\mu\big(\beta_{1}^{2}(a_1),\beta_{1}^{2}(a_2),\mu\big(\beta_{1}(a_3)\beta_{1}(a_4)\alpha_{1}(a_5)\big)\big)\otimes[\beta_{2}^{2}(x_1), \beta_{2}^{2}(x_2), [\beta_{2}(x_3), \beta_{2}(x_4)\big), \alpha_{2}(x_5)]]\\
&=&[\beta^2(a_1\otimes x_1), \beta^2(a_2\otimes x_2), [\beta(a_3\otimes x_3), \beta(a_4\otimes x_4), \alpha(a_5\otimes x_5)]_{A\otimes\mathfrak{g}}]_{A\otimes \mathfrak{g}},\end{array}$$
where using equation (\ref{eq}) in the second equality.

Thus $(\mathcal{A}\otimes\mathfrak{g},[.,.,.]_{\mathcal{A}\otimes\mathfrak{g}},\alpha,\beta)$ is a $3$-Bihom-Lie superalgebra.
\end{proof}
\begin{prop}
Given two $3$-Bihom-Lie superalgebras $(\mathfrak{g},[.,.,.],\alpha,\beta)$ and $(\mathfrak{g'},[.,.,.]',\alpha',\beta')$.
Then $(\mathfrak{g}\oplus\mathfrak{g'},[.,.,.]_{\mathfrak{g}\oplus\mathfrak{g'}},\alpha+\alpha',\beta+\beta'),$ is a $3$-Bihom-Lie superalgebras,
where the $3-$linear map $[.,.,.]_{\mathfrak{g}\oplus\mathfrak{g'}}:\wedge^{3}(\mathfrak{g}\oplus\mathfrak{g'})\longrightarrow\mathfrak{g}\oplus\mathfrak{g'}$ is given by
$$[u_{1}+v_{1},u_{2}+v_{2},u_{3}+v_{3}]_{\mathfrak{g}\oplus\mathfrak{g'}}=[u_{1},u_{2},u_{3}]+[v_{1},v_{2},v_{3}]',~\forall u_{i}\in\mathfrak{g},v_{i}\in\mathfrak{g'},~i=1,2,3,$$
and the two linear maps $\alpha+\alpha',\beta+\beta':\mathfrak{g}\oplus\mathfrak{g'}\longrightarrow\mathfrak{g}\oplus\mathfrak{g'},$ are given by $\forall u\in\mathfrak{g},~v\in\mathfrak{g'}$
$$\begin{array}{llll}
(\alpha+\alpha')(u+v)&=&\alpha(u)+\alpha'(v),\\
(\beta+\beta')(u+v)&=&\beta(u)+\beta'(v).
\end{array}$$
\end{prop}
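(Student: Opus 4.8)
The plan is to verify directly that the quadruple $(\mathfrak{g}\oplus\mathfrak{g'},[.,.,.]_{\mathfrak{g}\oplus\mathfrak{g'}},\alpha+\alpha',\beta+\beta')$ satisfies all four axioms in the definition of a $3$-Bihom-Lie superalgebra, using that each summand already does. First I would fix the $\mathbb{Z}_2$-grading on $\mathfrak{g}\oplus\mathfrak{g'}$ as the direct sum of the gradings, so that $|u+v|$ is well defined only for homogeneous elements with $|u|=|v|$; all computations are then reduced to homogeneous elements by bilinearity. The commutativity axiom $(\alpha+\alpha')\circ(\beta+\beta')=(\beta+\beta')\circ(\alpha+\alpha')$ is immediate from $\alpha\circ\beta=\beta\circ\alpha$ and $\alpha'\circ\beta'=\beta'\circ\alpha'$ componentwise, since the two summands do not interact.

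Next I would check the multiplicativity axiom: for homogeneous $u_i+v_i$,
\[
(\alpha+\alpha')\big([u_1+v_1,u_2+v_2,u_3+v_3]_{\mathfrak{g}\oplus\mathfrak{g'}}\big)
=(\alpha+\alpha')\big([u_1,u_2,u_3]+[v_1,v_2,v_3]'\big)
=\alpha([u_1,u_2,u_3])+\alpha'([v_1,v_2,v_3]'),
\]
and this equals $[\alpha(u_1),\alpha(u_2),\alpha(u_3)]+[\alpha'(v_1),\alpha'(v_2),\alpha'(v_3)]'$ by the multiplicativity of $\alpha$ on $\mathfrak{g}$ and of $\alpha'$ on $\mathfrak{g'}$, which is exactly $[(\alpha+\alpha')(u_1+v_1),(\alpha+\alpha')(u_2+v_2),(\alpha+\alpha')(u_3+v_3)]_{\mathfrak{g}\oplus\mathfrak{g'}}$; the same argument works for $\beta+\beta'$. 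For $3$-Bihom-super-skewsymmetry, applying the bracket formula and then the skewsymmetry in each of $\mathfrak{g}$ and $\mathfrak{g'}$ separately gives the sign $-(-1)^{|u_1||u_2|}$ out front (here one uses $|u_i|=|v_i|$ so the sign is the same in both summands), and similarly for the exchange of the last two slots.

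The main (though still routine) step is the $3$-Bihom-super-Jacobi identity. Here I would write out the three-term right-hand side for arguments $u_i+v_i$, $i=1,\dots,5$, expand each bracket via $[.,.,.]_{\mathfrak{g}\oplus\mathfrak{g'}}$ so that it splits as a $\mathfrak{g}$-part plus a $\mathfrak{g'}$-part, observe that the combinatorial signs depend only on the parities $|u_i+v_i|$ and hence are identical on both components, and then invoke the $3$-Bihom-super-Jacobi identity for $(\mathfrak{g},[.,.,.],\alpha,\beta)$ on the $\mathfrak{g}$-components and for $(\mathfrak{g'},[.,.,.]',\alpha',\beta')$ on the $\mathfrak{g'}$-components to collapse the sum to the single left-hand term $[\beta^2(\,\cdot\,),\beta^2(\,\cdot\,),[\beta(\,\cdot\,),\beta(\,\cdot\,),\alpha(\,\cdot\,)]]$. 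The only thing to be careful about is that $(\beta+\beta')^2 = \beta^2 + \beta'^2$ and $(\alpha+\alpha')(\beta+\beta')=\alpha\beta+\alpha'\beta'$ act diagonally, so that powers and composites of the twisting maps respect the direct-sum decomposition; once that is noted, no genuine obstacle remains, and the identity for the sum follows by adding the two componentwise identities. I expect no serious difficulty — the proposition is essentially the statement that the category of $3$-Bihom-Lie superalgebras has finite coproducts given by direct sums — so the write-up is just a matter of bookkeeping the signs and the diagonal action of the structure maps.
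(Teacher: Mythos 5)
Your proposal is correct and follows essentially the same route as the paper: a componentwise verification of each axiom (commutation of the structure maps, skewsymmetry, and the $3$-Bihom-super-Jacobi identity), reducing to homogeneous elements with matching parities in the two summands. The only difference is cosmetic — you also spell out the multiplicativity of $\alpha+\alpha'$ and $\beta+\beta'$ with respect to the bracket, which the paper leaves implicit.
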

\begin{proof}
For any $u_{i}\in\mathfrak{g}_{\theta_{i}},~v_{i}\in\mathfrak{g'}_{\theta_{i}},~i=1,2,3,4,5$ we have:
$$\begin{array}{llll}(\alpha+\alpha')\circ(\beta+\beta')(u_{1}+v_{1})&=&(\alpha+\alpha')(\beta(u_{1})+\beta'(v_{1}))\\
&=&\alpha\circ\beta(u_{1})+\alpha'\circ\beta'(v_{1})\\
&=&\beta\circ\alpha(u_{1})+\beta'\circ\alpha'(v_{1})\\
&=&(\beta+\beta')(\alpha (u_{1})+\alpha'(v_{1}))\\
&=&(\beta+\beta')\circ(\alpha+\alpha')(u_{1}+v_{1})\end{array}.$$
Next, we consider the $3$-Bihom-super-skewsymmetry,
$$\begin{array}{llllllll}&&[(\beta+\beta')(u_{1}+v_{1}),(\beta+\beta')(u_{2}+v_{2}),(\alpha+\alpha')(u_{3}+v_{3})]_{\mathfrak{g}\oplus\mathfrak{g'}}\\
&=&[\beta(u_{1})+\beta'(v_{1}),\beta(u_{2})+\beta'(v_{2}),\alpha(u_{3})+\alpha'(v_{3})]_{\mathfrak{g}\oplus\mathfrak{g'}}\\
&=&[\beta(u_{1}),\beta(u_{2}),\alpha(u_{3})]+[\beta'(v_{1}),\beta'(v_{2}),\alpha'(v_{3})]'\\
&=&-(-1)^{\theta_{1}\theta_{2}}[\beta(u_{2}),\beta(u_{1}),\alpha(u_{3})]-(-1)^{\theta_{1}\theta_{2}}[\beta'(v_{2}),\beta'(v_{1}),\alpha'(v_{3})]'\\
&=&-(-1)^{\theta_{1}\theta_{2}}[(\beta+\beta')(u_{2}+v_{2}),(\beta+\beta')(u_{1}+v_{1}),(\alpha+\alpha')(u_{3}+v_{3})]_{\mathfrak{g}\oplus\mathfrak{g'}}\\
&=&-(-1)^{\theta_{1}\theta_{2}}[(\beta+\beta')(u_{2}+v_{2}),(\beta+\beta')(u_{1}+v_{1}),(\alpha+\alpha')(u_{3}+v_{3})]_{\mathfrak{g}\oplus\mathfrak{g'}}.\end{array}$$
Similarly, we can get
$$\begin{array}{llllllll}&&[(\beta+\beta')(u_{1}+v_{1}),(\beta+\beta')(u_{2}+v_{2}),(\alpha+\alpha')(u_{3}+v_{3})]_{\mathfrak{g}\oplus\mathfrak{g'}}\\
&=&-(-1)^{\theta_{2}\theta_{3}}[(\beta+\beta')(u_{1}+v_{1}),(\beta+\beta')(u_{3}+v_{3}),(\alpha+\alpha')(u_{2}+v_{2})]_{\mathfrak{g}\oplus\mathfrak{g'}}.\end{array}$$
We prove the $3$-Bihom-super-Jacobi identity,
$$\begin{array}{lllll}&&[(\beta+\beta')^{2}(u_{1}+v_{1}),(\beta+\beta')^{2}(u_{2}+v_{2}),[(\beta+\beta')(u_{3}+v_{3}),
(\beta+\beta')(u_{4}+v_{4}),\\
&&(\alpha+\alpha')(u_{5}+v_{5})]_{\mathfrak{g}\oplus\mathfrak{g'}}]_{\mathfrak{g}\oplus\mathfrak{g'}}\\
&=&[\beta^{2}(u_{1})+\beta^{2}(v_{1},\beta^{2}(u_{2}+\beta'^{2}(v_{2}p),[\beta(u_{3}),\beta(u_{4},\alpha(u_{5})]
+[\beta'(v_{3}),\beta'(v_{4}),\alpha'(v_{5})]']_{\mathfrak{g}\oplus\mathfrak{g'}}\\
&=&[\beta^{2}(u_{1}),\beta^{2}(u_{2}),[\beta(u_{3}),\beta(u_{4}),\alpha(u_{5})]]+[\beta'^{2}(v_{1}),\beta'^{2}(v_{2}),[\beta'(v_{3}),\beta'(v_{4}),\alpha'(v_{4})]']'\\
&=&(-1)^{\theta_{3}\theta_{5}}\circlearrowleft_{u_{4},u_{5},u_{3}}(-1)^{\gamma}[\beta^{2}(u_{4}),\beta^{2}(u_{5}),[\beta(u_{1}),\beta(u_{2}),\alpha(u_{3})]]\\
&&+(-1)^{\theta_{3}\theta_{5}}\circlearrowleft_{v_{4},v_{5},v_{3}}(-1)^{\gamma}[\beta^{2}(v_{4}),\beta^{2}(v_{5}),[\beta(v_{1}),\beta(v_{2}),\alpha(v_{3})]']'\\
&=&(-1)^{\theta_{3}\theta_{5}}\circlearrowleft_{(u_{4},v_{4}),(u_{5},v_{5}),(u_{3},v_{3})}(-1)^{\gamma}
[(\beta+\beta')^{2}(u_{4}+v_{4}),(\beta+\beta')^{2}(u_{5}+v_{5}),\\
&&[(\beta+\beta')(u_{1}+v_{1}),
(\beta+\beta')(u_{2}+v_{2}),
(\alpha+\alpha')(u_{3}+v_{3})]_{\mathfrak{g}\oplus\mathfrak{g'}}]_{\mathfrak{g}\oplus\mathfrak{g'}}
\end{array}$$
where $\gamma=(\theta_{4}+\theta_{5})(\theta_{1}+\theta_{2})+\theta_{3}\theta_{4}$.

Then $(\mathfrak{g}\oplus\mathfrak{g'},[.,.,.]_{\mathfrak{g}\oplus\mathfrak{g'}},\alpha+\alpha,\beta+\beta'),$ is a $3$-Bihom-Lie superalgebras
\end{proof}
\begin{defn}
Let $(\mathfrak{g},[.,.,.],\alpha,\beta)$ and $(\mathfrak{g}',[.,.,.]',\alpha',\beta')$ be two $3$-Bihom-Lie superalgebra. A homomorphism $f:\mathfrak{g}\longrightarrow \mathfrak{g}'$ is said to be morphism of $3$-Bihom-Lie superalgebra if
$$\begin{array}{llllllll}
f([x,y,z])=[f(x),f(y),f(z)]',~\forall x,y,z\in\mathfrak{g},\\
f\circ\alpha=\alpha'\circ f\\
f\circ\beta=\beta'\circ f.
\end{array}$$
\end{defn}
Denote by $\phi_{f}=\{x+f(x);~x\in\mathfrak{g}\}\subset\mathfrak{g}\oplus\mathfrak{g}'$ which is the graph of a linear map $f:\mathfrak{g}\longrightarrow\mathfrak{g'}.$
\begin{prop}
A homomorphism $f:(\mathfrak{g},[.,.,.],\alpha,\beta)\longrightarrow(\mathfrak{g}',[.,.,.]',\alpha',\beta')$ is a morphism of $3$-Bihom-Lie superalgebras if and only if the graph
$\mathfrak{g}\oplus\mathfrak{g'}$ is a $3$-Bihom-subalgebra of $(\mathfrak{g}\oplus\mathfrak{g'},[.,.,.]_{\mathfrak{g}\oplus\mathfrak{g'}},\alpha+\alpha',\beta+\beta')$.
\end{prop}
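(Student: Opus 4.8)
The plan is to use the standard graph argument, exploiting the fact that in the direct sum $\mathfrak{g}\oplus\mathfrak{g}'$ (which is a $3$-Bihom-Lie superalgebra by the preceding proposition) the two components of an element are independent: an element lies in $\phi_f$ precisely when its $\mathfrak{g}'$-component is the $f$-image of its $\mathfrak{g}$-component. First I would note that since $f$ is even, $\phi_f=\{x+f(x)\mid x\in\mathfrak{g}\}$ is a $\mathbb{Z}_2$-graded subspace, so to decide whether it is a $3$-Bihom subalgebra one only has to check the three closure conditions $(\alpha+\alpha')(\phi_f)\subseteq\phi_f$, $(\beta+\beta')(\phi_f)\subseteq\phi_f$ and $[\phi_f,\phi_f,\phi_f]_{\mathfrak{g}\oplus\mathfrak{g}'}\subseteq\phi_f$.

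For the forward implication, assume $f$ is a morphism. For $x\in\mathfrak{g}$ one computes $(\alpha+\alpha')(x+f(x))=\alpha(x)+\alpha'(f(x))=\alpha(x)+f(\alpha(x))\in\phi_f$, using $\alpha'\circ f=f\circ\alpha$, and likewise $(\beta+\beta')(x+f(x))=\beta(x)+f(\beta(x))\in\phi_f$. For the bracket, given $x_1,x_2,x_3\in\mathfrak{g}$,
\[ [x_1+f(x_1),x_2+f(x_2),x_3+f(x_3)]_{\mathfrak{g}\oplus\mathfrak{g}'}=[x_1,x_2,x_3]+[f(x_1),f(x_2),f(x_3)]'=[x_1,x_2,x_3]+f([x_1,x_2,x_3]), \]
which again lies in $\phi_f$. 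Hence $\phi_f$ is a $3$-Bihom subalgebra.

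Conversely, assume $\phi_f$ is a $3$-Bihom subalgebra of the direct sum. Applying $\alpha+\alpha'$ to $x+f(x)$ gives an element $\alpha(x)+\alpha'(f(x))\in\phi_f$ whose $\mathfrak{g}$-component is $\alpha(x)$; but the unique element of $\phi_f$ with that $\mathfrak{g}$-component is $\alpha(x)+f(\alpha(x))$, so $\alpha'(f(x))=f(\alpha(x))$, i.e. $f\circ\alpha=\alpha'\circ f$; the same computation with $\beta+\beta'$ yields $f\circ\beta=\beta'\circ f$. Finally, the displayed bracket computation above shows $[x_1,x_2,x_3]+[f(x_1),f(x_2),f(x_3)]'\in\phi_f$, and comparing it with the unique element $[x_1,x_2,x_3]+f([x_1,x_2,x_3])$ of $\phi_f$ having $\mathfrak{g}$-component $[x_1,x_2,x_3]$ forces $[f(x_1),f(x_2),f(x_3)]'=f([x_1,x_2,x_3])$. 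Thus $f$ is a morphism of $3$-Bihom-Lie superalgebras. The argument is entirely routine; the only point deserving a word of care is that $\phi_f$ is a graded subspace (which holds because $f$ is even) and that uniqueness of the $\phi_f$-representative is what lets one read off each morphism identity — there is no genuine obstacle.
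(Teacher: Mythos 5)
Your proposal is correct and follows essentially the same argument as the paper: both directions are proved by computing $(\alpha+\alpha')(x+f(x))$, $(\beta+\beta')(x+f(x))$ and the bracket of three graph elements in the direct sum, and reading off the morphism identities from membership in $\phi_f$. Your added remarks on gradedness of $\phi_f$ and uniqueness of the representative with a given $\mathfrak{g}$-component are harmless refinements of the same proof.
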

\begin{proof}
Let $f:(\mathfrak{g},[.,.,.],\alpha,\beta)\longrightarrow(\mathfrak{g}',[.,.,.]',\alpha',\beta')$ is a morphism of $3$-Bihom-Lie superalgebras, for any $x,y,z\in\mathfrak{g}$, we have
$$[x+f(x),y+f(y),z+f(z)]_{\mathfrak{g}\oplus\mathfrak{g}'}=[x,y,z]+[f(x),f(y),f(z)]'=[x,y,z]+f([x,y,z]).$$
Thus the graph $\phi_{f}$ is closed under the bracket operation $[.,.,.]_{\mathfrak{g}\oplus\mathfrak{g'}}$. Furthermore, we have
$$(\alpha+\alpha')(x+f(x))=\alpha(x)+\alpha'\circ f(x)=\alpha(x)+f\circ\alpha(x),$$
which implies that $$(\alpha+\alpha')(\phi_{f})\subset\phi_{f}.$$
Similarly,$$(\beta+\beta')(\phi_{f})\subset\phi_{f}.$$
Thus $\phi_{f}$ is a $3$-Bihom-Lie subalgebras of $(\mathfrak{g}\oplus\mathfrak{g'},[.,.,.]_{\mathfrak{g}\oplus\mathfrak{g'}},\alpha+\alpha',\beta+\beta')$.

Conversely, if the graph $\phi_{f}$ is a $3$-Bihom-Lie subalgebras of $(\mathfrak{g}\oplus\mathfrak{g'},[.,.,.]_{\mathfrak{g}\oplus\mathfrak{g'}},\alpha+\alpha',\beta+\beta')$, we have
$$[x+f(x),y+f(y),z+f(z)]_{\mathfrak{g}\oplus\mathfrak{g'}}=[x,y,z]+[f(x),f(y),f(z)]'\in\phi_{f}$$
which implies that $$[f(x),f(y),f(z)]'=f([x,y,z]).$$
Furthermore, $(\alpha+\alpha')(\phi_{f})\subset\phi_{f}$ yields that
$$(\alpha+\alpha')(x+f(x))=\alpha(x)+\alpha'\circ f(x)\in\phi_{f},$$
which is equivalent to the condition $\alpha'\circ f(x)=f\circ \alpha(x)$, i.e, $\alpha'\circ f=f\circ\alpha.$ Similarly, $\beta'\circ f=f\circ\beta.$ Therefore, $f$ is a morphism of
$3$-Bihom-Lie superalgebras.

\end{proof}
\begin{defns}
\begin{enumerate}
\item
A $3$-Bihom-Lie superalgebra $(\mathfrak{g},[.,.,.],\alpha,\beta)$ is says regular $3$-Bihom-Lie superalgebra, if $\alpha^{-s}\beta^{-r}$ is the inverse of $\alpha^{s}\beta^{r}.$
\item
Let $(\mathfrak{g},[.,.,.],\alpha,\beta)$ be a $3-$Bihom-Lie superalgebra. A linear map $\mathfrak{D}:\mathfrak{g}\longrightarrow\mathfrak{g}$ is called a derivation if it satisfies for all $x,y,z\in\mathfrak{g}$,
$$\mathfrak{D}\circ\alpha=\alpha\circ\mathfrak{D},~~\mathfrak{D}\circ\beta=\beta\circ\mathfrak{D},$$
$$\mathfrak{D}([x,y,z])=[\mathfrak{D}(x),y,z]+(-1)^{|x||\mathfrak{D}|}[x,\mathfrak{D}(y),z]+(-1)^{(|x|+|y|)|\mathfrak{D}|}[x,y,\mathfrak{D}(z)],$$
and it is called an $(\alpha^{s}\beta^{r})$-derivation of $(\mathfrak{g},[.,.,.],\alpha,\beta)$, if satisfies:
$$\mathfrak{D}\circ\alpha=\alpha\circ\mathfrak{D},~~\mathfrak{D}\circ\beta=\beta\circ\mathfrak{D},$$
$$\begin{array}{llllll}\mathfrak{D}([x,y,z])&=&[\mathfrak{D}(x),\alpha^{s}\beta^{r}(y),\alpha^{s}\beta^{r}(z)]+(-1)^{|x||\mathfrak{D}|}[\alpha^{s}\beta^{r}(x),\mathfrak{D}(y),\alpha^{s}\beta^{r}(z)]\\
&+&(-1)^{(|x|+|y|)|\mathfrak{D}|}[\alpha^{s}\beta^{r}(x),\alpha^{s}\beta^{r}(y),\mathfrak{D}(z)].\end{array}$$
denote by $Der_{\alpha^{s}\beta^{r}}(\mathfrak{g})$ the set of $\alpha^{s}\beta^{r}$-derivations of $(\mathfrak{g},[.,.,.],\alpha,\beta)$ and set
$$Der(\mathfrak{g}):=\bigoplus_{s,r\geq 0}Der_{\alpha^{s}\beta^{r}}(\mathfrak{g}).$$
\end{enumerate}
\end{defns}
We show that $Der(\mathfrak{g})$ is equipped with a Lie superalgebra structure defined by $[\mathfrak{D},\mathfrak{D'}]=\mathfrak{D}\mathfrak{D'}-(-1)^{|\mathfrak{D}||\mathfrak{D'}|}\mathfrak{D'}\mathfrak{D}$. Note that for any $\mathfrak{D}\in Der_{\alpha^{s}\beta^{r}}(\mathfrak{g})$ and $\mathfrak{D}'\in Der_{\alpha^{s'}\beta^{r'}}(\mathfrak{g}')$,
we have $[\mathfrak{D},\mathfrak{D'}]\in Der_{(\alpha^{s+s'}\beta^{r+r'})}(\mathfrak{g})$

Now let $(\mathfrak{g},[.,.,.],\alpha,\beta)$ be a regular $3-$Bihom-Lie superalgebra, for any $u_{1},u_{2}\in\mathfrak{g}$ satisfying $\alpha(u_{1})=\beta(u_{1})=u_{1},~\alpha(u_{2})=\beta(u_{2})=u_{2}$, define $\mathfrak{D}_{s,r}(u_{1},u_{2})\in End(\mathfrak{g})$ by
$$\mathfrak{D}_{r,s}(u_{1},u_{2})(w)=[u_{1},u_{2},\alpha^{r}\beta^{s}(w)],~\forall w\in\mathfrak{g},$$
where $|\mathfrak{D}_{r,s}(u_{1},u_{2})|=|u_{1}|+|u_{2}|$.
\begin{prop}Let $(\mathfrak{g},[.,.,.],\alpha,\beta)$ be a regular $3-$Bihom-Lie superalgebra. Then
$\mathfrak{D}_{r,s}(u_{1},u_{2})$ is an $\alpha^{r}\beta^{s+1}$-derivation. We call an inner $\alpha^{r}\beta^{s+1}$-derivation.
\end{prop}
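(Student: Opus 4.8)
The plan is to verify the two conditions in the definition of an $\alpha^{r}\beta^{s+1}$-derivation for $\mathfrak{D}:=\mathfrak{D}_{r,s}(u_{1},u_{2})$, where $\mathfrak{D}(w)=[u_{1},u_{2},\alpha^{r}\beta^{s}(w)]$ and $|\mathfrak{D}|=|u_{1}|+|u_{2}|$. The commutation relations are immediate: since $\alpha$ and $\beta$ are commuting algebra homomorphisms with $\alpha(u_{i})=\beta(u_{i})=u_{i}$, for all $w\in\mathfrak{g}$ we have $\mathfrak{D}(\alpha(w))=[u_{1},u_{2},\alpha^{r+1}\beta^{s}(w)]=[\alpha(u_{1}),\alpha(u_{2}),\alpha(\alpha^{r}\beta^{s}(w))]=\alpha(\mathfrak{D}(w))$, and symmetrically $\mathfrak{D}\circ\beta=\beta\circ\mathfrak{D}$.

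For the graded Leibniz rule I would start from $\mathfrak{D}([x,y,z])=\big[u_{1},u_{2},[\alpha^{r}\beta^{s}(x),\alpha^{r}\beta^{s}(y),\alpha^{r}\beta^{s}(z)]\big]$ and apply the $3$-Bihom-super-Jacobi identity with the five arguments $u_{1},\,u_{2},\,\alpha^{r}\beta^{s-1}(x),\,\alpha^{r}\beta^{s-1}(y),\,\alpha^{r-1}\beta^{s}(z)$ (in this order). Here regularity is essential: $\alpha^{-1}$ and $\beta^{-1}$ are algebra automorphisms and fix $u_{1},u_{2}$, so these arguments are well defined; then, because $\beta\circ(\alpha^{r}\beta^{s-1})=\alpha^{r}\beta^{s}$, $\alpha\circ(\alpha^{r-1}\beta^{s})=\alpha^{r}\beta^{s}$ and $\beta^{2}(u_{i})=u_{i}$, the left-hand side of the identity is exactly $\mathfrak{D}([x,y,z])$. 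On the right-hand side one gets three terms, each an outer bracket of two $\alpha^{i}\beta^{j}$-images of two of $x,y,z$ with an inner bracket equal to $[u_{1},u_{2},\alpha^{r+1}\beta^{s-1}(w)]$ in the first two terms and to $[u_{1},u_{2},\alpha^{r}\beta^{s}(z)]=\mathfrak{D}(z)$ in the third; and since $\alpha\beta^{-1}$ is an algebra automorphism fixing $u_{1},u_{2}$, the inner bracket of the first two terms is $\alpha\big(\beta^{-1}\mathfrak{D}(w)\big)$.

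To finish, in the first two terms I would move the $\mathfrak{D}$-entry out of the third slot using the $3$-Bihom-super-skewsymmetry: for the first term via the cyclic consequence $[\beta(a),\beta(b),\alpha(c)]=(-1)^{|c|(|a|+|b|)}[\beta(c),\beta(a),\alpha(b)]$, which brings $\mathfrak{D}(x)$ into the first slot and simultaneously normalizes the other two entries to $\alpha^{r}\beta^{s+1}$-images, giving $[\mathfrak{D}(x),\alpha^{r}\beta^{s+1}(y),\alpha^{r}\beta^{s+1}(z)]$; for the second term via the transposition $[\beta(a),\beta(b),\alpha(c)]=-(-1)^{|b||c|}[\beta(a),\beta(c),\alpha(b)]$, giving $(-1)^{|x||\mathfrak{D}|}[\alpha^{r}\beta^{s+1}(x),\mathfrak{D}(y),\alpha^{r}\beta^{s+1}(z)]$; the third term already reads $(-1)^{(|x|+|y|)|\mathfrak{D}|}[\alpha^{r}\beta^{s+1}(x),\alpha^{r}\beta^{s+1}(y),\mathfrak{D}(z)]$. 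Summing the three reproduces precisely the defining identity of an $\alpha^{r}\beta^{s+1}$-derivation, so $\mathfrak{D}_{r,s}(u_{1},u_{2})\in Der_{\alpha^{r}\beta^{s+1}}(\mathfrak{g})$. The one genuinely delicate point is the sign bookkeeping in this last step: one must check that the Koszul signs coming from the Jacobi identity combine with those from the two skewsymmetry rearrangements to produce exactly $1$, $(-1)^{|x||\mathfrak{D}|}$ and $(-1)^{(|x|+|y|)|\mathfrak{D}|}$ in front of the three terms. Everything else is a routine substitution using $\alpha(u_{i})=\beta(u_{i})=u_{i}$, the multiplicativity of $\alpha,\beta$, and their invertibility.
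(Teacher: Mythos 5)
Your proposal is correct and follows essentially the same route as the paper: both check the commutations with $\alpha,\beta$ directly and obtain the Leibniz rule by reading $\mathfrak{D}_{r,s}(u_{1},u_{2})([x,y,z])$ as the left-hand side of the $3$-Bihom-super-Jacobi identity applied to $u_{1},u_{2},\alpha^{r}\beta^{s-1}(x),\alpha^{r}\beta^{s-1}(y),\alpha^{r-1}\beta^{s}(z)$, then repositioning the inner brackets via the $3$-Bihom-super-skewsymmetry and using $\alpha(u_{i})=\beta(u_{i})=u_{i}$ together with multiplicativity and regularity. The sign bookkeeping you defer does close exactly as you predict: the Koszul factors from the Jacobi identity cancel against those from the two skewsymmetry moves, yielding the coefficients $1$, $(-1)^{|x||\mathfrak{D}|}$ and $(-1)^{(|x|+|y|)|\mathfrak{D}|}$ of the defining identity of an $\alpha^{r}\beta^{s+1}$-derivation.
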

\begin{proof}
For any $w\in\mathfrak{g}$,
$$\begin{array}{llllll}\mathfrak{D}_{r,s}(u_{1},u_{2})(\alpha(w))&=&[u_{1},u_{2},\alpha^{r}\beta^{s}\alpha(w)]\\&=&[\alpha(u_{1}),\alpha(u_{2}),\alpha^{r}\beta^{s}\alpha(w)]\\&=&
\alpha([u_{1},u_{2},\alpha^{r}\beta^{s}(w)]\\&=&\alpha\circ\mathfrak{D}_{r,s}(u_{1},u_{2})(w).\end{array}$$
Similarly, $\mathfrak{D}_{r,s}(u_{1},u_{2})(\beta(w))=\beta\circ\mathfrak{D}_{r,s}(u_{1},u_{2})(w).$
$$\begin{array}{lllllll}&&\mathfrak{D}_{r,s}(u_{1},u_{2})([u,v,w])\\
&=&[u_{1},u_{2},\alpha^{r}\beta^{s}[u,v,w]]\\
&=&[\beta^{2}(u_{1}),\beta^{2}(u_{2}),[\beta\alpha^{r}\beta^{s-1}(u),\beta\alpha^{r}\beta^{s-1}(v),\alpha\alpha^{r-1}\beta^{s}(w)]]\\
&=&(-1)^{(|v|+|w|)(|u_{1}|+|u_{2}|+|u|)}[\beta^{2}\alpha^{r}\beta^{s-1}(v),\beta^{2}\alpha^{r-1}\beta^{s}(w),[\beta(u_{1}),\beta(u_{2}),\alpha\alpha^{r}\beta^{s-1}(u)]]\\
&&-(-1)^{(|u|+|w|)(|u_{1}|+|u_{2}|)+|v||w|}[\beta^{2}\alpha^{r}\beta^{s-1}(u),\beta^{2}\alpha^{r-1}\beta^{s}(w),[\beta(u_{1}),\beta(u_{2}),\alpha\alpha^{r}\beta^{s-1}(v)]]\\
&&+(-1)^{(|u|+|v|)(|u_{1}|+|u_{2}|)}[\beta^{2}\alpha^{r}\beta^{s-1}(u),\beta^{2}\alpha^{r}\beta^{s-1}(v),[\beta(u_{1}),\beta(u_{2}),\alpha\alpha^{r-1}\beta^{s}(w)]]\\
&=&(-1)^{(|v|+|w|)(|u_{1}|+|u_{2}|+|u|)}[\beta^{2}\alpha^{r}\beta^{s-1}(v),\beta^{2}\alpha^{r-1}\beta^{s}(w),[\alpha(u_{1}),\alpha(u_{2}),\alpha\alpha^{r}\beta^{s-1}(u)]]\\
&&-(-1)^{(|u|+|w|)(|u_{1}|+|u_{2}|)+|v||w|}[\beta^{2}\alpha^{r}\beta^{s-1}(u),\beta^{2}\alpha^{r-1}\beta^{s}(w),[\alpha(u_{1}),\alpha(u_{2}),\alpha\alpha^{r}\beta^{s-1}(v)]]\\
&&+(-1)^{(|u|+|v|)(|u_{1}|+|u_{2}|)}[\beta^{2}\alpha^{r}\beta^{s-1}(u),\beta^{2}\alpha^{r}\beta^{s-1}(v),[\alpha(u_{1}),\alpha(u_{2}),\alpha\alpha^{r-1}\beta^{s}(w)]]\\
&=&(-1)^{(|v|+|w|)(|u_{1}|+|u_{2}|+|u|)}[\beta\alpha^{r}\beta^{s}(v),\beta\alpha^{r-1}\beta^{s+1}(w),\alpha([u_{1},u_{2},\alpha^{r}\beta^{s-1}(u)])]\\
&&-(-1)^{(|u|+|w|)(|u_{1}|+|u_{2}|)+|v||w|}[\beta\alpha^{r}\beta^{s}(u),\beta\alpha^{r-1}\beta^{s+1}(w),\alpha([u_{1},u_{2},\alpha^{r}\beta^{s-1}(v)])]\\
&&+(-1)^{(|u|+|v|)(|u_{1}|+|u_{2}|)}[\beta\alpha^{r}\beta^{s}(u),\beta\alpha^{r}\beta^{s}(v),\alpha([u_{1},u_{2},\alpha^{r-1}\beta^{s}(w)])]\\
&=&(-1)^{(|v|+|w|)(|u_{1}|+|u_{2}|+|u|)}(-1)^{(|u_{1}|+|u_{2}|)(|v|+|w|+|u|)}[\beta([u_{1},u_{2},\alpha^{r}\beta^{s-1}(u)]),\beta\alpha^{r}\beta^{s}(v),\alpha\alpha^{r-1}\beta^{s+1}(w)]\\
&&-(-1)^{(|u|+|w|)(|u_{1}|+|u_{2}|)+|v||w|}(-1)^{(|u_{1}|+|u_{2}|+|v|)|w|}[\beta\alpha^{r}\beta^{s}(u),\beta([u_{1},u_{2},\alpha^{r}\beta^{s-1}(v)]),\alpha\alpha^{r-1}\beta^{s+1}(w)]\\
&&+(-1)^{(|u|+|v|)(|u_{1}|+|u_{2}|)}[\beta\alpha^{r}\beta^{s}(u),\beta\alpha^{r}\beta^{s}(v),\alpha([u_{1},u_{2},\alpha^{r-1}\beta^{s}(w)])]\\
&=&[\mathfrak{D}_{r,s}(u_{1},u_{2})(u),\alpha^{r}\beta^{s+1}(v),\alpha\alpha^{r-1}\beta^{s+1}(w)]\\
&&-(-1)^{|u|(|u_{1}|+|u_{2}|)}[\beta\alpha^{r}\beta^{s}(u),\mathfrak{D}_{r,s}(u_{1},u_{2})(v)),\alpha\alpha^{r-1}\beta^{s+1}(w)]
\end{array}$$
$$\begin{array}{lllllll}
&&+(-1)^{(|u|+|v|)(|u_{1}|+|u_{2}|)}[\alpha^{r}\beta^{s+1}(u),\alpha^{r}\beta^{s+1}(v),\mathfrak{D}_{r,s}(u_{1},u_{2})(w)]\\
&=&[\mathfrak{D}_{r,s}(u_{1},u_{2})(u),\alpha^{r}\beta^{s+1}(v),\alpha^{r}\beta^{s+1}(w)]\\
&&-(-1)^{|u||\mathfrak{D}_{r,s}(u_{1},u_{2})|}[\alpha^{r}\beta^{s+1}(u),\mathfrak{D}_{r,s}(u_{1},u_{2})(v)),\alpha^{r}\beta^{s+1}(w)]\\
&&+(-1)^{(|u|+|v|)|\mathfrak{D}_{r,s}(u_{1},u_{2})|}[\alpha^{r}\beta^{s+1}(u),\alpha^{r}\beta^{s+1}(v),\mathfrak{D}_{r,s}(u_{1},u_{2})(w)].
\end{array}$$
Therefore, $\mathfrak{D}_{r,s}(u_{1},u_{2})$ is an $\alpha^{r}\beta^{s+1}$-derivation.
\end{proof}
\section{REPRESENTATIONS AND $T_{\theta}$-EXTENSIONS OF $3$-BIHOM-LIE SUPERALGEBRAS}
\begin{defn}
Let $(\mathfrak{g},[.,.,.],\alpha,\beta)$ be a $3$-Bihom-Lie superalgebra. A representation of $\mathfrak{g}$ is a $4-$tuple
$(M,\rho,\alpha_{M},\beta_{M}),$ where $M$ is a vector superspace, $\alpha_{M},~\beta_{M}\in End(M)$ are two commuting linear maps and $\rho:\mathfrak{g}\times\mathfrak{g}\longrightarrow End(M)$ is a super-skewsymmetry bilinear map, such that for all $u,v,w,x,y,z\in\mathfrak{g},$
\begin{enumerate}
\item
$~\rho(\alpha(u),\alpha(v))\circ \alpha_{M}=\alpha_{M}\circ\rho(u,v),$\\
\item
$~\rho(\beta(u),\beta(v))\circ\beta_{M}=\beta_{M}\circ\rho(u,v).$\\
\item
$~\rho(\alpha\beta(u),\alpha\beta(v))\circ\rho(x,y)=(-1)^{(|u|+|v|)(|x|+|y|)}\rho(\beta(x),\beta(y))\circ\rho(\alpha(u),\alpha(v))$\\
$+\rho([\beta(u),\beta(v),x],\beta(y))\circ\beta_{M}+(-1)^{|x|(|u|+|v|)}\rho(\beta(x),[\beta(u),\beta(v),y])\circ\beta_{M},$\\
\item
$\rho([\beta(u),\beta(v),x],\beta(y))\circ\beta_{M}=(-1)^{|u|(|x|+|v|)}\rho(\alpha\beta(v),\beta(x))\circ\rho(\alpha(u),y)$\\
$+(-1)^{|x|(|u|+|v|)}\rho(\beta(x),\alpha\beta(u))\circ\rho(\alpha(v),y)+\rho((\alpha\beta(u),\alpha\beta(v))\circ\rho(x,y).$
\end{enumerate}
\end{defn}
\begin{prop}\label{prop11}
Let $(\mathfrak{g},[.,.,.],\alpha,\beta)$ be a $3$-Bihom-Lie superalgebra and $(M,\rho,\alpha_{M},\beta_{M})$ a representation of $\mathfrak{g}$. Assume that the maps $\alpha$ and $\beta_{M}$ are surjective. Then $\mathfrak{g}\ltimes M:=(\mathfrak{g}\oplus M,[.,.,.]_{\rho},\alpha+\alpha_{M},\beta+\beta_{M})$ is a $3-$Bihom-Lie superalgebra, where $\alpha+\alpha_{M},~\beta+\beta_{M}:\mathfrak{g}\oplus M\longrightarrow\mathfrak{g}\oplus M$ are defined by $(\alpha+\alpha_{M})(u+x)=\alpha(u)+\alpha_{M}(x)$ and $(\beta+\beta_{M})(u+x)=\beta(u)+\beta_{M}(x),$ and the bracket
$[.,.,.]_{\rho}$ is defined by
$$\begin{array}{llll}&&[u+x,v+y,w+z]_{\rho}=[u,v,w]+\rho(u,v)(z)-(-1)^{|v||w|}\rho(u,\alpha^{-1}\beta(w))(\alpha_{M}\beta_{M}^{-1}(y))\\
&+&(-1)^{|u|(|v|+|w|)}\rho(v,\alpha^{-1}\beta(w))(\alpha_{M}\beta_{M}^{-1}(x)),\end{array}$$
for all $u,v,w\in\mathfrak{g}$ and $x,y,z\in M$. We call $\mathfrak{g}\ltimes M$ the semidirect product of the $3$-Bihom-Lie superalgebra $(\mathfrak{g},[.,.,.],\alpha,\beta)$ and $M$.
\end{prop}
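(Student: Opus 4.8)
The plan is to verify directly that the quadruple $(\mathfrak{g}\oplus M,[.,.,.]_{\rho},\alpha+\alpha_M,\beta+\beta_M)$ satisfies the four axioms of a $3$-Bihom-Lie superalgebra, treating them in the order of the definition and reducing each to properties of $(\mathfrak{g},[.,.,.],\alpha,\beta)$ together with the four compatibility conditions for the representation $(M,\rho,\alpha_M,\beta_M)$. The commutativity $(\alpha+\alpha_M)(\beta+\beta_M)=(\beta+\beta_M)(\alpha+\alpha_M)$ is immediate from $\alpha\beta=\beta\alpha$ and $\alpha_M\beta_M=\beta_M\alpha_M$. For multiplicativity I would apply $\alpha+\alpha_M$ (and then $\beta+\beta_M$) to each of the four summands in $[.,.,.]_{\rho}$: the $\mathfrak{g}$-term uses $\alpha([u,v,w])=[\alpha(u),\alpha(v),\alpha(w)]$, and the three $\rho$-terms use conditions (1) and (2), i.e. $\alpha_M\rho(u,v)=\rho(\alpha(u),\alpha(v))\alpha_M$ and $\beta_M\rho(u,v)=\rho(\beta(u),\beta(v))\beta_M$, once one notes that $\alpha,\beta$ commute with $\alpha^{-1}\beta$ and $\alpha_M,\beta_M$ with $\alpha_M\beta_M^{-1}$.

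Next I would check the $3$-Bihom-super-skewsymmetry. Expanding $[\beta'(u_1+x_1),\beta'(u_2+x_2),\alpha'(u_3+x_3)]_{\rho}$ with $\alpha'=\alpha+\alpha_M$, $\beta'=\beta+\beta_M$ and comparing with the bracket obtained by swapping the first two entries, the $\mathfrak{g}$-component matches by the Bihom-super-skewsymmetry of $[.,.,.]$ and the $M$-components match by the super-skewsymmetry of $\rho$ together with a relabeling of the two transport terms; the relation swapping the last two entries is handled identically, the invertibility hidden in $\alpha^{-1}\beta$ and $\alpha_M\beta_M^{-1}$ being what makes the coefficients align.

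The heart of the proof, and the step I expect to be the main obstacle, is the $3$-Bihom-super-Jacobi identity. Here I would evaluate both sides on $u_i+x_i$, $i=1,\dots,5$, and split everything into its $\mathfrak{g}$-part and its $M$-part. The $\mathfrak{g}$-part is exactly the $3$-Bihom-super-Jacobi identity of $\mathfrak{g}$, so it holds for free. The $M$-part is a sum of five groups of terms indexed by which slot carries the $M$-entry; after using (1)--(2) to push $\alpha_M,\beta_M$ through the $\rho$'s and to absorb the factors $\alpha_M\beta_M^{-1}$, the groups with the $M$-entry in one of the two outer positions collapse by condition (3) (the adjoint-type relation for $\rho$) together with the skewsymmetry of $\rho$, and the groups with the $M$-entry inside the inner bracket collapse by condition (4). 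The genuinely delicate part is the Koszul-sign bookkeeping — the signs are precisely the exponents $\gamma=(|u|+|v|)(|x|+|y|)+|z||u|$ from the remark after the definition — but no new idea is needed beyond careful accounting. Combining the $\mathfrak{g}$-part and the $M$-part yields the identity, so $\mathfrak{g}\ltimes M$ is a $3$-Bihom-Lie superalgebra.
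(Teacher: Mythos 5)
Your proposal is correct and follows essentially the same route as the paper: a direct verification of the four axioms, with commutativity and multiplicativity reduced to conditions (1)--(2) of the representation, skewsymmetry reduced to that of $[.,.,.]$ and $\rho$, and the $3$-Bihom-super-Jacobi identity checked by splitting into the $\mathfrak{g}$-component (which is the Jacobi identity of $\mathfrak{g}$) and the $M$-component, which is regrouped and closed using conditions (3)--(4). The paper simply carries out in full the Koszul-sign bookkeeping that you defer, so no new idea is missing.
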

\begin{proof}
First we show $(\alpha+\alpha_{M})\circ(\beta+\beta_{M})$ from the fact $\alpha\circ\beta=\beta\circ\alpha,$ and $\alpha_{M}\circ\beta_{M}=\beta_{M}\circ\alpha_{M}$.

$\forall u,v,w\in\mathfrak{g},~x,y,z\in M$
$$\begin{array}{lllllllllll}&&[(\alpha+\alpha_{M})(u+x),(\alpha+\alpha_{M})(v+y),(\alpha+\alpha_{M})(w+z)]_{\rho}\\
&=&[\alpha(u)+\alpha_{M}(x),\alpha(v)+\alpha_{M}(y),\alpha(w)+\alpha_{M}(z)]_{\rho}\\
&=&[\alpha(u),\alpha(v),\alpha(w)]+\rho(\alpha(u),\alpha(v))(\alpha_{M}(z))\\
&&-(-1)^{|v||w|}\rho(\alpha(u),\alpha^{-1}\beta\alpha(w))(\alpha_{M}\beta_{M}^{-1}\alpha_{M}(y))\\
&&+(-1)^{|u|(|v|+|w|)}\rho(\alpha(v),\alpha^{-1}\beta\alpha(w))(\alpha_{M}\beta_{M}^{-1}\alpha_{M}(x))
\end{array}$$
$$\begin{array}{llllll}
&=&\alpha([u,v,w])+\alpha_{M}\rho(u,v)(z)-(-1)^{|v||w|}\alpha_{M}\rho(u,\alpha^{-1}\beta(w))(\alpha_{M}\beta_{M}^{-1}(y))\\
&&+(-1)^{|u|(|v|+|w|)}\alpha_{M}\rho(v,\alpha^{-1}\beta(w))(\alpha_{M}\beta_{M}^{-1}(x))\\
&=&(\alpha+\alpha_{M})([u,v,w]+\rho(u,v)(z)-(-1)^{|v||w|}\rho(u,\alpha^{-1}\beta(w))(\alpha_{M}\beta_{M}^{-1}(y))\\
&&+(-1)^{|u|(|v|+|w|)}\rho(v,\alpha^{-1}\beta(w))(\alpha_{M}\beta_{M}^{-1}(x)))\\
&=&(\alpha+\alpha_{M})([u+x,v+y,w+z]_{\rho}).
\end{array}$$
Similarly,
$$\begin{array}{lllllllllll}&&[(\beta+\beta_{M})(u+x),(\beta+\beta_{M})(v+y),(\beta+\beta_{M})(w+z)]_{\rho}~~~~~~~~~~~~~~~~~~~~~~~~~~~~~~~~\\
&=&(\beta+\beta_{M})([u+x,v+y,w+z]_{\rho}).
\end{array}$$
Next we show that $[.,.,.]_{\rho}$ satisfies $3$-Bihom-super-skewsymmetry. Let $\forall u,v,w\in\mathfrak{g},~x,y,z\in M$,
$$\begin{array}{llll}&&[(\beta+\beta_{M})(u+x),(\beta+\beta_{M})(v+y),(\alpha+\alpha_{M})(w+z)]_{\rho}\\
&=&[\beta(u)+\beta_{M}(x),\beta(v)+\beta_{M}(y),\alpha(w)+\alpha_{M}(z)]_{\rho}\\
&=&[\beta(u),\beta(v),\alpha(w)]+\rho(\beta(u),\beta(v))(\alpha_{M}(z))\\-&&(-1)^{|v||w|}\rho(\beta(u),\alpha^{-1}\beta\alpha(w))(\alpha_{M}\beta_{M}^{-1}\beta_{M}(y))\\
&&+(-1)^{|u|(|v|+|w|)}\rho(\beta(v),\alpha^{-1}\beta\alpha(w))(\alpha_{M}\beta_{M}^{-1}\beta_{M}(x))\\
&=&-(-1)^{|u||v|}[\beta(v),\beta(u),\alpha(w)]-(-1)^{|u||v|}\rho(\beta(v),\beta(u))(\alpha_{M}(z))\\&&+(-1)^{|u|(|v|+|w|)}\rho(\beta(v),\alpha^{-1}\beta\alpha(w))(\alpha_{M}\beta_{M}^{-1}\beta_{M}(x))\\
&&-(-1)^{|v||w|}\rho(\beta(u),\alpha^{-1}\beta\alpha(w))(\alpha_{M}\beta_{M}^{-1}\beta_{M}(y))\\
&=&-(-1)^{|u||v|}([\beta(v),\beta(u),\alpha(w)]+\rho(\beta(v),\beta(u))(\alpha_{M}(z))\\&&-(-1)^{|u||w|}\rho(\beta(v),\alpha^{-1}\beta\alpha(w))(\alpha_{M}\beta_{M}^{-1}\beta_{M}(x))\\
&&+(-1)^{|v|(|u|+|w|)}\rho(\beta(u),\alpha^{-1}\beta\alpha(w))(\alpha_{M}\beta_{M}^{-1}\beta_{M}(y)))\\
&=&-(-1)^{|u||v|}[(\beta+\beta_{M})(v+y),(\beta+\beta_{M})(u+x),(\alpha+\alpha_{M})(w+z)]_{\rho}
\end{array}$$
\end{proof}
Similarly,
$$\begin{array}{llll}&&[(\beta+\beta_{M})(u+x),(\beta+\beta_{M})(v+y),(\alpha+\alpha_{M})(w+z)]_{\rho}\\
&=&-(-1)^{|v||w|}[(\beta+\beta_{M})(u+x),(\beta+\beta_{M})(w+z),(\alpha+\alpha_{M})(v+y)]_{\rho}
\end{array}$$
Finally, we show that $[.,.,.]_{\rho}$ satisfies $3$-Bihom-super-Jacobi. Let $\forall u_{i}\in\mathfrak{g},~x_{i}\in M,~i=1,2,3,4,5$,
$$\begin{array}{lllll}
&&[(\beta+\beta_{M})^{2}(u_{1}+x_{1}),(\beta+\beta_{M})^{2}(u_{2}+x_{2}),[(\beta+\beta_{M})(u_{3}+x_{3}),\\&&(\beta+\beta_{M})(u_{4}+x_{4}),(\alpha+\alpha_{M})(u_{5}+x_{5})]_{\rho}]_{\rho}\\
&=&[\beta^{2}(u_{1})+\beta_{M}^{2}(x_{1}),\beta^{2}(u_{2})+\beta_{M}^{2}(x_{2}),[\beta(u_{3})+\beta_{M}(x_{3}),\beta(u_{4})+\beta_{M}(x_{4}),\alpha(u_{5})+\alpha_{M}(x_{5})]_{\rho}]_{\rho}\\
&=&[\beta^{2}(u_{1})+\beta_{M}^{2}(x_{1}),\beta^{2}(u_{2})+\beta_{M}^{2}(x_{2}),[\beta(u_{3}),\beta(u_{4}),\alpha(u_{5})]+\rho(\beta(u_{3}),\beta(u_{4}))(\alpha_{M}(x_{5}))\\
&&-(-1)^{|u_{4}||u_{5}|}\rho(\beta(u_{3}),\beta(u_{5}))(\alpha_{M}(x_{4}))+(-1)^{|u_{3}|(|u_{4}|+|u_{5}|)}\rho(\beta(u_{4}),\beta(u_{5}))(\alpha_{M}(x_{3}))]_{\rho}\\
&=&[\beta^{2}(u_{1}),\beta^{2}(u_{2}),[\beta(u_{3}),\beta(u_{4}),\alpha(u_{5})]]+\rho(\beta^{2}(u_{1}),\beta^{2}(u_{2}))(\rho(\beta(u_{3}),\beta(u_{4}))(\alpha_{M}(x_{5}))\\
&&-(-1)^{|u_{4}||u_{5}|}\rho(\beta(u_{3}),\beta(u_{5}))(\alpha_{M}(x_{4}))+(-1)^{|u_{3}|(|u_{4}|+|u_{5}|)}\rho(\beta(u_{4}),\beta(u_{5}))(\alpha_{M}(x_{3})))\\
&&-(-1)^{|u_{2}|(|u_{3}|+|u_{4}|+|u_{5}|)}\rho(\beta^{2}(u_{1}),\alpha^{-1}\beta([\beta(u_{3}),\beta(u_{4}),\alpha(u_{5})]))(\alpha_{M}\beta_{M}(x_{2}))\\
&&+(-1)^{|u_{1}|(|u_{2}|+|u_{3}|+|u_{4}|+|u_{5}|)}\rho(\beta^{2}(u_{2}),\alpha^{-1}\beta([\beta(u_{3}),\beta(u_{4}),\alpha(u_{5})]))(\alpha_{M}\beta_{M}(x_{1}))\\
&=&(-1)^{(|u_{4}|+|u_{5}|)(|u_{1}|+|u_{2}|+|u_{3}|)}[\beta^{2}(u_{4}),\beta^{2}(u_{5}),[\beta(u_{1}),\beta(u_{2}),\alpha(u_{3})]]\\
&&-(-1)^{(|u_{3}|+|u_{5}|)(|u_{1}|+|u_{2}|)+|u_{4}||u_{5}|)}[\beta^{2}(u_{3}),\beta^{2}(u_{5}),[\beta(u_{1}),\beta(u_{2}),\alpha(u_{4})]]\\
&&+(-1)^{(|u_{3}|+|u_{4}|)(|u_{1}|+|u_{2}|)}[\beta^{2}(u_{3}),\beta^{2}(u_{4}),[\beta(u_{1}),\beta(u_{2}),\alpha(u_{5})]]\\
&&+(-1)^{(|u_{1}|+|u_{2}|)(|u_{3}|+|u_{4}|)}\rho(\beta^{2}(u_{3}),\beta^{2}(u_{4}))\rho(\beta(u_{1}),\beta(u_{2}))(\alpha_{M}(x_{5}))\\
&&+\rho(\alpha^{-1}\beta([\beta(u_{1}),\beta(u_{2}),\alpha(u_{3})]),\beta^{2}(u_{4}))(\alpha_{M})\beta_{M})(x_{5}))\\
&&+(-1)^{(|u_{3}|)(|u_{1}|+|u_{2}|)}\rho(\beta^{2}(u_{3}),\alpha^{-1}\beta([\beta(u_{1}),\beta(u_{2}),\alpha(u_{4})]))(\alpha_{M}\beta_{M}(x_{5}))\\
&&-(-1)^{(|u_{1}|+|u_{2}|)(|u_{3}|+|u_{5}|)+|u_{4}||u_{5}|}\rho(\beta^{2}(u_{3}),\beta^{2}(u_{5}))\rho(\beta(u_{1}),\beta(u_{2}))(\alpha_{M}(x_{4}))\\
&&-(-1)^{|u_{4}||u_{5}|}\rho(\alpha^{-1}\beta([\beta(u_{1}),\beta(u_{2}),\alpha(u_{3})]),\beta^{2}(u_{5}))(\alpha_{M})\beta_{M})(x_{4}))\\
&&-(-1)^{(|u_{3}|)(|u_{1}|+|u_{2}|)+|u_{4}||u_{5}|}\rho(\beta^{2}(u_{3}),\alpha^{-1}\beta([\beta(u_{1}),\beta(u_{2}),\alpha(u_{5})]))(\alpha_{M}\beta_{M}(x_{4}))\\
&&+(-1)^{(|u_{1}|+|u_{2}|)(|u_{3}|+|u_{4}|)+|u_{3}|(|u_{4}|+|u_{5}|)}\rho(\beta^{2}(u_{4}),\beta^{2}(u_{5}))\rho(\beta(u_{1}),\beta(u_{2}))(\alpha_{M}(x_{3}))\\
&&+(-1)^{|u_{3}|(|u_{4}|+|u_{5}|)}\rho(\alpha^{-1}\beta([\beta(u_{1}),\beta(u_{2}),\alpha(u_{4})]),\beta^{2}(u_{5}))(\alpha_{M})\beta_{M})(x_{5}))\\
&&+(-1)^{(|u_{3}|)(|u_{1}|+|u_{2}|)+|u_{3}|(|u_{4}|+|u_{5}|)}\rho(\beta^{2}(u_{4}),\alpha^{-1}\beta([\beta(u_{1}),\beta(u_{2}),\alpha(u_{5})]))(\alpha_{M}\beta_{M}(x_{3}))\\
&&+(-1)^{(|u_{1}|+|u_{2}|)(|u_{3}|+|u_{4}|+|u_{5}|)}\rho(\alpha^{-1}\beta([\beta(u_{3}),\beta(u_{4}),\alpha(u_{5})]),\beta^{2}(u_{1}))(\alpha_{M}\beta_{M}(x_{2}))\\
&&-(-1)^{(|u_{1}|+|u_{2}|)(|u_{3}|+|u_{4}|+|u_{5}|)+|u_{1}||u_{2}|}\rho(\alpha^{-1}\beta([\beta(u_{3}),\beta(u_{4}),\alpha(u_{5})]),\beta^{2}(u_{2}))(\alpha_{M}\beta_{M}(x_{1}))\\
&=&(-1)^{(|u_{4}|+|u_{5}|)(|u_{1}|+|u_{2}|+|u_{3}|)}[\beta^{2}(u_{4}),\beta^{2}(u_{5}),[\beta(u_{1}),\beta(u_{2}),\alpha(u_{3})]]
\end{array}$$
$$\begin{array}{lllll}
&&-(-1)^{(|u_{3}|+|u_{5}|)(|u_{1}|+|u_{2}|)+|u_{4}||u_{5}|)}[\beta^{2}(u_{3}),\beta^{2}(u_{5}),[\beta(u_{1}),\beta(u_{2}),\alpha(u_{4})]]\\
&&+(-1)^{(|u_{3}|+|u_{4}|)(|u_{1}|+|u_{2}|)}[\beta^{2}(u_{3}),\beta^{2}(u_{4}),[\beta(u_{1}),\beta(u_{2}),\alpha(u_{5})]]\\
&&+(-1)^{(|u_{1}|+|u_{2}|)(|u_{3}|+|u_{4}|)}\rho(\beta^{2}(u_{3}),\beta^{2}(u_{4}))\rho(\beta(u_{1}),\beta(u_{2}))(\alpha_{M}(x_{5}))\\
&&+\rho(\alpha^{-1}\beta([\beta(u_{1}),\beta(u_{2}),\alpha(u_{3})]),\beta^{2}(u_{4}))(\alpha_{M})\beta_{M})(x_{5}))\\
&&+(-1)^{(|u_{3}|)(|u_{1}|+|u_{2}|)}\rho(\beta^{2}(u_{3}),\alpha^{-1}\beta([\beta(u_{1}),\beta(u_{2}),\alpha(u_{4})]))(\alpha_{M}\beta_{M}(x_{5}))\\
&&-(-1)^{(|u_{1}|+|u_{2}|)(|u_{3}|+|u_{5}|)+|u_{4}||u_{5}|}\rho(\beta^{2}(u_{3}),\beta^{2}(u_{5}))\rho(\beta(u_{1}),\beta(u_{2}))(\alpha_{M}(x_{4}))\\
&&-(-1)^{|u_{4}||u_{5}|}\rho(\alpha^{-1}\beta([\beta(u_{1}),\beta(u_{2}),\alpha(u_{3})]),\beta^{2}(u_{5}))(\alpha_{M})\beta_{M})(x_{4}))\\
&&-(-1)^{(|u_{3}|)(|u_{1}|+|u_{2}|)+|u_{4}||u_{5}|}\rho(\beta^{2}(u_{3}),\alpha^{-1}\beta([\beta(u_{1}),\beta(u_{2}),\alpha(u_{5})]))(\alpha_{M}\beta_{M}(x_{4}))\\
&&+(-1)^{(|u_{1}|+|u_{2}|)(|u_{3}|+|u_{4}|)+|u_{3}|(|u_{4}|+|u_{5}|)}\rho(\beta^{2}(u_{4}),\beta^{2}(u_{5}))\rho(\beta(u_{1}),\beta(u_{2}))(\alpha_{M}(x_{3}))\\
&&+(-1)^{|u_{3}|(|u_{4}|+|u_{5}|)}\rho(\alpha^{-1}\beta([\beta(u_{1}),\beta(u_{2}),\alpha(u_{4})]),\beta^{2}(u_{5}))(\alpha_{M})\beta_{M})(x_{3}))\\
&&+(-1)^{(|u_{3}|)(|u_{1}|+|u_{2}|)+|u_{3}|(|u_{4}|+|u_{5}|)}\rho(\beta^{2}(u_{4}),\alpha^{-1}\beta([\beta(u_{1}),\beta(u_{2}),\alpha(u_{5})]))(\alpha_{M}\beta_{M}(x_{3}))\\
&&+(-1)^{(|u_{1}|+|u_{2}|+|u_{3}|)(|u_{3}|+|u_{4}|+|u_{5}|)+|u_{3}|}\rho(\beta^{2}(u_{4}),\beta^{2}(u_{5}))\rho(\beta(u_{3}),\beta(u_{1}))(\alpha_{M}(x_{2}))\\
&&+(-1)^{(|u_{1}|+|u_{2}|+|u_{5}|)(|u_{3}|+|u_{4}|+|u_{5}|)+|u_{5}|}\rho(\beta^{2}(u_{5}),\beta^{2}(u_{3}))\rho(\beta(u_{4}),\beta(u_{1}))(\alpha_{M}(x_{2}))\\
&&+(-1)^{(|u_{1}|+|u_{2}|)(|u_{3}|+|u_{4}|+|u_{5}|)}\rho(\beta^{2}(u_{3}),\beta^{2}(u_{4}))\rho(\beta(u_{5}),\beta(u_{1}))(\alpha_{M}(x_{2}))\\
&&-(-1)^{(|u_{1}|+|u_{2}|+|u_{3}|)(|u_{3}|+|u_{4}|+|u_{5}|)+|u_{3}|+|u_{1}||u_{2}|}\rho(\beta^{2}(u_{4}),\beta^{2}(u_{5}))\rho(\beta(u_{3}),\beta(u_{2}))(\alpha_{M}(x_{1}))\\
&&-(-1)^{(|u_{1}|+|u_{2}|)(|u_{4}|+|u_{5}|)+|u_{1}||u_{2}|}\rho(\beta^{2}(u_{5}),\beta^{2}(u_{3}))\rho(\beta(u_{4}),\beta(u_{2}))(\alpha_{M}(x_{1}))\\
&&-(-1)^{(|u_{1}|+|u_{2}|)(|u_{3}|+|u_{4}|+|u_{5}|)}\rho(\beta^{2}(u_{3}),\beta^{2}(u_{4}))\rho(\beta(u_{5}),\beta(u_{2}))(\alpha_{M}(x_{1}))\\
&=&(-1)^{(|u_{4}|+|u_{5}|)(|u_{1}|+|u_{2}|+|u_{3}|)}[\beta^{2}(u_{4}),\beta^{2}(u_{5}),[\beta(u_{1}),\beta(u_{2}),\alpha(u_{3})]]\\
&&+\rho(\beta^{2}(u_{4}),\beta^{2}(u_{5}))((-1)^{(|u_{4}|+|u_{5}|)(|u_{1}|+|u_{2}|+|u_{3}|)}\rho(\beta(u_{1}),\beta(u_{2}))(\alpha_{M}(x_{3}))\\
&&-(-1)^{(|u_{4}|+|u_{5}|)(|u_{1}|+|u_{2}|+|u_{3}|)+|u_{2}||u_{3}|}\rho(\beta(u_{1}),\beta(u_{3}))(\alpha_{M}(x_{2}))\\
&&+(-1)^{(|u_{4}|+|u_{5}|+|u_{1}|)(|u_{1}|+|u_{2}|+|u_{3}|)+|u_{1}|}\rho(\beta(u_{2}),\beta(u_{3}))(\alpha_{M}(x_{1})))\\
&&-(-1)^{|u_{4}|(|u_{1}|+|u_{2}|+|u_{3}|)}\rho(\beta^{2}(u_{4}),\alpha^{-1}\beta([\beta(u_{1}),\beta(u_{2}),\alpha(u_{3})]))(\alpha_{M}\beta_{M}(x_{5}))\\
&&-(-1)^{|u_{5}|(|u_{1}|+|u_{2}|+|u_{3}|+|u_{4}|)}\rho(\beta^{2}(u_{5}),\alpha^{-1}\beta([\beta(u_{1}),\beta(u_{2}),\alpha(u_{3})]))(\alpha_{M}\beta_{M}(x_{4}))\\
&&-(-1)^{(|u_{3}|+|u_{5}|)(|u_{1}|+|u_{2}|)+|u_{4}||u_{5}|}[\beta^{2}(u_{3}),\beta^{2}(u_{5}),[\beta(u_{1}),\beta(u_{2}),\alpha(u_{4})]]\\
&&-\rho(\beta^{2}(u_{3}),\beta^{2}(u_{5}))((-1)^{(|u_{1}|+|u_{2}|)(|u_{3}|+|u_{5}|)+|u_{4}||u_{5}|}(\rho(\beta(u_{1}),\beta(u_{2}))(\alpha_{M}(x_{4}))\\
&&-(-1)^{(|u_{3}|+|u_{5}|)(|u_{1}|+|u_{2}|)+|u_{4}|(|u_{5}|+|u_{2}|)}\rho(\beta(u_{1}),\beta(u_{4}))(\alpha_{M}(x_{2}))\\
&&+(-1)^{(|u_{3}|+|u_{5}|)(|u_{1}|+|u_{2}|)+|u_{4}|(|u_{5}|+|u_{1}|)+|u_{1}||u_{2}|}\rho(\beta(u_{2}),\beta(u_{4}))(\alpha_{M}(x_{1})))\\
&&+(-1)^{|u_{3}|(|u_{1}|+|u_{2}|)}\rho(\beta^{2}(u_{3}),\alpha^{-1}\beta([\beta(u_{1}),\beta(u_{2}),\alpha(u_{4})]))(\alpha_{M}\beta_{M}(x_{5}))\\
&&-(-1)^{|u_{5}|(|u_{1}|+|u_{2}|+|u_{4}|)+|u_{3}|)+|u_{3}||u_{4}|}\rho(\beta^{2}(u_{5}),\alpha^{-1}\beta([\beta(u_{1}),\beta(u_{2}),\alpha(u_{4})]))(\alpha_{M}\beta_{M}(x_{3}))\\
&&+(-1)^{(|u_{3}|+|u_{4}|)(|u_{1}|+|u_{2}|)}[\beta^{2}(u_{3}),\beta^{2}(u_{4}),[\beta(u_{1}),\beta(u_{2}),\alpha(u_{5})]]\\
&&+\rho(\beta^{2}(u_{3}),\beta^{2}(u_{4}))((-1)^{(|u_{1}|+|u_{2}|)(|u_{3}|+|u_{4}|)}\rho(\beta(u_{1}),\beta(u_{2}))(\alpha_{M}(x_{5}))\\
&&-(-1)^{(|u_{1}|+|u_{2}|)(|u_{3}|+|u_{4}|)+|u_{2}||u_{5}|)}\rho(\beta(u_{1}),\beta(u_{5}))(\alpha_{M}(x_{2}))\\
&&+(-1)^{(|u_{1}|+|u_{2}|)(|u_{3}|+|u_{4}|+|u_{5}|)+|u_{2}||u_{5}|}\rho(\beta(u_{2}),\beta(u_{5}))(\alpha_{M}(x_{1}))\\
&&-(-1)^{|u_{3}|(|u_{1}|+|u_{2}|)+|u_{4}||u_{5}|}\rho(\beta^{2}(u_{3}),\alpha^{-1}\beta([\beta(u_{1}),\beta(u_{2}),\alpha(u_{4})]))(\alpha_{M}\beta_{M}(x_{4}))\\
&&+(-1)^{|u_{4}|(|u_{1}|+|u_{2}|)+|u_{3}|(|u_{4}|+|u_{5}|}\rho(\beta^{2}(u_{4}),\alpha^{-1}\beta([\beta(u_{1}),\beta(u_{2}),\alpha(u_{4})]))(\alpha_{M}\beta_{M}(x_{3}))
\end{array}$$
$$\begin{array}{lllll}
&=&(-1)^{(|u_{4}|+|u_{5}|)(|u_{1}|+|u_{2}|+|u_{3}|)}[(\beta+\beta_{M})^{2}(u_{4}+x_{4}),(\beta+\beta_{M})^{2}(u_{5}+x_{5}),[(\beta+\beta_{M})(u_{1}+x_{1}),
\\&&(\beta+\beta_{M})(u_{2}+x_{2}),(\alpha+\alpha_{M})(u_{3}+x_{3})]_{\rho}]_{\rho}\\
&&-(-1)^{(|u_{3}|+|u_{5}|)(|u_{1}|+|u_{2}|)+|u_{4}||u_{5}|}[(\beta+\beta_{M})^{2}(u_{3}+x_{3}),(\beta+\beta_{M})^{2}(u_{5}+x_{5}),[(\beta+\beta_{M})(u_{1}+x_{1}),
\\&&(\beta+\beta_{M})(u_{2}+x_{2}),(\alpha+\alpha_{M})(u_{4}+x_{4})]_{\rho}]_{\rho}\\
&&+(-1)^{(|u_{3}|+|u_{4}|)(|u_{1}|+|u_{2}|)}[(\beta+\beta_{M})^{2}(u_{3}+x_{3}),(\beta+\beta_{M})^{2}(u_{4}+x_{4}),[(\beta+\beta_{M})(u_{1}+x_{1}),
\\&&(\beta+\beta_{M})(u_{2}+x_{2}),(\alpha+\alpha_{M})(u_{5}+x_{5})]_{\rho}]_{\rho}\\
\end{array}$$
 Then $\mathfrak{g}\ltimes M:=(\mathfrak{g}\oplus M,[.,.,.]_{\rho},\alpha+\alpha_{M},\beta+\beta_{M})$ is a $3-$Bihom-Lie superalgebra.
\begin{defn}
Let $(\mathfrak{g},[.,.,.],\alpha,\beta)$ be a $3-$Bihom-Lie superalgebra and $(M,\rho,\alpha_{M},\beta_{M})$ be a representation of $\mathfrak{g}$. If $\theta:
\mathfrak{g}\times\mathfrak{g}\times\mathfrak{g}\rightarrow M$ is a $3$-linear map and satisfies
\begin{enumerate}
\item
$\alpha_{M}\theta(x_{1},x_{2},x_{3})=\theta(\alpha(x_{1}),\alpha(x_{2}),\alpha(x_{3})),$
\item
$\beta_{M}\theta(x_{1},x_{2},x_{3})=\theta(\beta(x_{1}),\beta(x_{2}),\beta(x_{3})),$
\item
$\theta(\beta(x_{1}),\beta(x_{2}),\alpha(x_{3}))=-(-1)^{|x_{1}||x_{2}|}\theta(\beta(x_{2}),\beta(x_{1}),\alpha(x_{3}))=-(-1)^{|x_{2}||x_{3}|}\theta(\beta(x_{1}),\beta(x_{3}),\alpha(x_{2})),$
\item
$\theta(\beta^{2}(x_{1}),\beta^{2}(x_{2}),[\beta(x_{3}),\beta(x_{4}),\alpha(x_{5})])+\rho(\beta^{2}(x_{1}),\beta^{2}(x_{2}))\theta(\beta(x_{3}),\beta(x_{4}),\alpha(x_{5}))$\\
$\begin{array}{lllll}
&=&(-1)^{(|x_{4}|+|x_{5}|)(|x_{1}|+|x_{2}|+|x_{3}|)}\theta(\beta^{2}(x_{4}),\beta^{2}(x_{5}),[\beta(x_{1}),\beta(x_{2}),\alpha(x_{3})])\\
&&+(-1)^{(|x_{4}|+|x_{5}|)(|x_{1}|+|x_{2}|+|x_{3}|)}\rho(\beta^{2}(x_{4}),\beta^{2}(x_{5}))\theta(\beta(x_{1}),\beta(x_{2}),\alpha(x_{3}))\\
&&-(-1)^{(|x_{3}|+|x_{5}|)(|x_{1}|+|x_{2}|)+|x_{4}||x_{5}|}\theta(\beta^{2}(x_{3}),\beta^{2}(x_{5}),[\beta(x_{1}),\beta(x_{2}),\alpha(x_{4})])\\
&&-(-1)^{(|x_{3}|+|x_{5}|)(|x_{1}|+|x_{2}|)+|x_{4}||x_{5}|}\rho(\beta^{2}(x_{3}),\beta^{2}(x_{5}))\theta(\beta(x_{1}),\beta(x_{2}),\alpha(x_{4}))\\
&&+(-1)^{(|x_{3}|+|x_{4}|)(|x_{1}|+|x_{2}|)}\theta(\beta^{2}(x_{3}),\beta^{2}(x_{4}),[\beta(x_{1}),\beta(x_{2}),\alpha(x_{5})])\\
&&+(-1)^{(|x_{3}|+|x_{4}|)(|x_{1}|+|x_{2}|)}\rho(\beta^{2}(x_{3}),\beta^{2}(x_{4}))\theta(\beta(x_{1}),\beta(x_{2}),\alpha(x_{5})),
\end{array}$
\end{enumerate}
where $x_{1},~x_{2},~x_{3},~x_{4},~x_{5}\in\mathfrak{g}$. Then $\theta$ is called a $3$-cocycle associated with $\rho$.
\end{defn}

\begin{prop}
Let $(\mathfrak{g},[.,.,.],\alpha,\beta)$ be a $3$-Bihom-Lie superalgebra and $(M,\rho,\alpha_{M},\beta_{M})$ a representation of $\mathfrak{g}$. Assume that the maps $\alpha$ and $\beta_{M}$ are surjective. If $\theta$ is a $3$-cocycle associated with $\rho$. Then $(\mathfrak{g}\oplus M,[.,.,.]_{\theta},\alpha+\alpha_{M},\beta+\beta_{M})$ is a $3-$Bihom-Lie superalgebra, where $\alpha+\alpha_{M};~\beta+\beta_{M}:\mathfrak{g}\oplus M\longrightarrow\mathfrak{g}\oplus M$ are defined by $(\alpha+\alpha_{M})(u+x)=\alpha(u)+\alpha_{M}(x)$ and $(\beta+\beta_{M})(u+x)=\beta(u)+\beta_{M}(x),$ and the bracket
$[.,.,.]_{\theta}$ is defined by
$$\begin{array}{llll}&&[u+x,v+y,w+z]_{\theta}=[u,v,w]+\theta(u,v,w)+\rho(u,v)(z)-(-1)^{|v||w|}\rho(u,\alpha^{-1}\beta(w))(\alpha_{M}\beta_{M}^{-1}(y))\\
&+&(-1)^{|u|(|v|+|w|)}\rho(v,\alpha^{-1}\beta(w))(\alpha_{M}\beta_{M}^{-1}(x)),\end{array}$$
for all $u,v,w\in\mathfrak{g}$ and $x,y,z\in M$. $(\mathfrak{g}\oplus M,[.,.,.]_{\theta},\alpha+\alpha_{M},\beta+\beta_{M})$ is called the $T_{\theta}$-extension of $(\mathfrak{g},[.,.,.],\alpha,\beta)$ by $M$, denoted by $T_{\theta}(\mathfrak{g}).$
\end{prop}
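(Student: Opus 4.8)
The plan is to exploit that the bracket $[\cdot,\cdot,\cdot]_{\theta}$ differs from the semidirect-product bracket $[\cdot,\cdot,\cdot]_{\rho}$ of Proposition \ref{prop11} only by the extra summand $\theta(u,v,w)$, which is valued in $M$ and depends only on the $\mathfrak{g}$-components. Writing $X=u+x$, $Y=v+y$, $Z=w+z$ with $u,v,w\in\mathfrak{g}$ and $x,y,z\in M$, one has $[X,Y,Z]_{\theta}=[X,Y,Z]_{\rho}+\theta(u,v,w)$, the last term being regarded as an element of $M\subset\mathfrak{g}\oplus M$. Since Proposition \ref{prop11} already asserts that $(\mathfrak{g}\oplus M,[\cdot,\cdot,\cdot]_{\rho},\alpha+\alpha_{M},\beta+\beta_{M})$ is a $3$-Bihom-Lie superalgebra, it suffices to check that the extra $\theta$-term does not disturb any of the defining axioms; the four conditions in the definition of a $3$-cocycle are designed precisely to ensure this.

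Concretely, I would proceed axiom by axiom. The relation $(\alpha+\alpha_{M})\circ(\beta+\beta_{M})=(\beta+\beta_{M})\circ(\alpha+\alpha_{M})$ does not involve the bracket and is verified exactly as in Proposition \ref{prop11}. For the compatibility of $\alpha+\alpha_{M}$ with $[\cdot,\cdot,\cdot]_{\theta}$, expand $(\alpha+\alpha_{M})([X,Y,Z]_{\theta})=(\alpha+\alpha_{M})([X,Y,Z]_{\rho})+\alpha_{M}(\theta(u,v,w))$; the first term is handled by Proposition \ref{prop11}, and $\alpha_{M}(\theta(u,v,w))=\theta(\alpha(u),\alpha(v),\alpha(w))$ is cocycle condition (1), so both sides match; condition (2) does the same for $\beta+\beta_{M}$. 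The $3$-Bihom-super-skewsymmetry of $[\cdot,\cdot,\cdot]_{\theta}$ splits likewise: the $[\cdot,\cdot,\cdot]_{\rho}$-part is skewsymmetric by Proposition \ref{prop11}, and the $\theta$-part is skewsymmetric by cocycle condition (3).

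The only substantial point is the $3$-Bihom-super-Jacobi identity. I would substitute $[\cdot,\cdot,\cdot]_{\theta}=[\cdot,\cdot,\cdot]_{\rho}+\theta(\cdot,\cdot,\cdot)$ into both sides of the Jacobi relation (applied to arguments $u_{i}+x_{i}$, $i=1,\dots,5$) and sort the resulting terms into those that never involve $\theta$ and those that do. The $\theta$-free terms reproduce verbatim the Jacobi computation carried out for $[\cdot,\cdot,\cdot]_{\rho}$ in Proposition \ref{prop11}, hence cancel. In the remaining terms, since $\theta$ depends only on $\mathfrak{g}$-components and takes values in $M$, no iterated term of the form ``$\theta$ of $\theta$'' can arise; what is left on the left-hand side is exactly
$$\theta(\beta^{2}(u_{1}),\beta^{2}(u_{2}),[\beta(u_{3}),\beta(u_{4}),\alpha(u_{5})])+\rho(\beta^{2}(u_{1}),\beta^{2}(u_{2}))\,\theta(\beta(u_{3}),\beta(u_{4}),\alpha(u_{5})),$$
where $u_{i}$ denotes the $\mathfrak{g}$-component of the $i$-th argument, while the right-hand side is the corresponding signed sum of three such expressions. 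Matching these two is precisely cocycle condition (4). Combining the four verifications shows that $T_{\theta}(\mathfrak{g})$ is a $3$-Bihom-Lie superalgebra.

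The main obstacle will be the bookkeeping in the Jacobi step: one must carefully track the Koszul signs and the twists $\alpha^{-1}\beta$ and $\alpha_{M}\beta_{M}^{-1}$ (where the surjectivity, hence invertibility, of $\alpha$ and $\beta_{M}$ is used) in order to confirm that the $\theta$-terms separate cleanly from the $\theta$-free ones and align, coefficient by coefficient, with condition (4), with no stray remainder that would need to be absorbed elsewhere.
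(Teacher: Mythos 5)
Your proposal is correct, and it is in fact more informative than the paper's own argument, which consists of the single remark that ``the proof is similar to Proposition \ref{prop11}'' --- i.e.\ the author intends one to repeat the semidirect-product computation with the $\theta$-terms inserted. You instead treat $[\cdot,\cdot,\cdot]_{\theta}=[\cdot,\cdot,\cdot]_{\rho}+\theta\circ(\pi_{\mathfrak{g}}\otimes\pi_{\mathfrak{g}}\otimes\pi_{\mathfrak{g}})$ as a perturbation of the semidirect-product bracket and invoke Proposition \ref{prop11} as a black box, so that only the $\theta$-contributions need to be checked against the four cocycle axioms: conditions (1)--(2) give compatibility with $\alpha+\alpha_{M}$ and $\beta+\beta_{M}$, condition (3) gives the Bihom-super-skewsymmetry, and condition (4) gives the Jacobi identity. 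Your key structural observations are the right ones: since $\theta$ takes values in $M$ and $M$-components of the first two slots enter the bracket only through the two twisted $\rho$-terms, the inner $\theta$-term of an iterated bracket is picked up solely by $\rho(u,v)(\cdot)$, no ``$\theta$ of $\theta$'' term can occur, and the leftover terms on the left-hand side of the Jacobi identity are exactly $\theta(\beta^{2}(u_{1}),\beta^{2}(u_{2}),[\beta(u_{3}),\beta(u_{4}),\alpha(u_{5})])+\rho(\beta^{2}(u_{1}),\beta^{2}(u_{2}))\theta(\beta(u_{3}),\beta(u_{4}),\alpha(u_{5}))$, which together with the corresponding right-hand-side terms is verbatim cocycle condition (4) (note that the twisted arguments $\beta^{2},\beta,\alpha$ appearing there match because skewsymmetry and Jacobi are only imposed on such twisted arguments, and conditions (3)--(4) are stated in the same twisted form). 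What your route buys is economy and transparency --- no re-derivation of the long sign bookkeeping of Proposition \ref{prop11} --- at the mild cost of having to justify the clean separation of $\theta$-free and $\theta$-dependent terms, which you do. The hypotheses needed to quote Proposition \ref{prop11} (surjectivity of $\alpha$ and $\beta_{M}$, so that $\alpha^{-1}\beta$ and $\alpha_{M}\beta_{M}^{-1}$ make sense) are the same as those assumed here, so the reduction is legitimate.
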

\begin{proof}
The proof is similar to Proposition \ref{prop11}.
\end{proof}
\begin{prop}\label{propk}
Let $(\mathfrak{g},[.,.,.],\alpha,\beta)$ be a $3-$Bihom-Lie superalgebra and $(M,\rho,\alpha_{M},\beta_{M})$ be a representation of $\mathfrak{g}$.
Assume that the maps $\alpha$ and $\beta$ are surjective. $f:\mathfrak{g}\rightarrow M$ is a linear map such that $f\circ\alpha=\alpha_{M}\circ f$ and
$f\circ\beta=\beta_{M}\circ f$. Then the $3-$linear map $\theta_{f}:\mathfrak{g}\times\mathfrak{g}\times\mathfrak{g}\rightarrow M$ given by
$$\begin{array}{lll}\theta_{f}(x,y,z)&=&f([x,y,z])-\rho(x,y)f(z)+(-1)^{|y||z|}\rho(x,\alpha^{-1}\beta(z))f(\alpha\beta^{-1}(y))\\
&&-(-1)^{(|y|+|z|)|x|}\rho(y,\alpha^{-1}\beta(z))f(\alpha\beta^{-1}(x)),\end{array}$$
for all $x,y,z\in\mathfrak{g}$, is a $3-$cocycle associated with $\rho$.
\end{prop}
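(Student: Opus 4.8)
The plan is to verify the four defining conditions of a $3$-cocycle associated with $\rho$ for the map $\theta_{f}$. Conditions (1) and (2) I expect to be immediate: applying $\alpha_{M}$ to $\theta_{f}(x,y,z)$ and using $f\circ\alpha=\alpha_{M}\circ f$, the fact that $\alpha$ is an algebra endomorphism, the covariance relation $\rho(\alpha(u),\alpha(v))\circ\alpha_{M}=\alpha_{M}\circ\rho(u,v)$ from the definition of a representation, and the commutation of $\alpha$ with $\alpha^{-1}\beta$ and $\alpha\beta^{-1}$, one obtains $\alpha_{M}\theta_{f}(x,y,z)=\theta_{f}(\alpha(x),\alpha(y),\alpha(z))$, and symmetrically with $\beta$ in place of $\alpha$. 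For condition (3), the $3$-Bihom-super-skewsymmetry of $\theta_{f}$, I would match the four summands of $\theta_{f}(\beta(x_{1}),\beta(x_{2}),\alpha(x_{3}))$ term by term against those of $\theta_{f}(\beta(x_{2}),\beta(x_{1}),\alpha(x_{3}))$ and of $\theta_{f}(\beta(x_{1}),\beta(x_{3}),\alpha(x_{2}))$, using the $3$-Bihom-super-skewsymmetry of $[\cdot,\cdot,\cdot]$ and the super-skewsymmetry of $\rho$ to produce the signs $-(-1)^{|x_{1}||x_{2}|}$ and $-(-1)^{|x_{2}||x_{3}|}$ respectively.

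For condition (4) I would avoid a head-on expansion and argue by a shearing isomorphism. Define $\Phi\colon\mathfrak{g}\oplus M\longrightarrow\mathfrak{g}\oplus M$ by $\Phi(u+x)=u+\big(x+f(u)\big)$. This is a linear bijection, and $f\circ\alpha=\alpha_{M}\circ f$, $f\circ\beta=\beta_{M}\circ f$ give $\Phi\circ(\alpha+\alpha_{M})=(\alpha+\alpha_{M})\circ\Phi$ and $\Phi\circ(\beta+\beta_{M})=(\beta+\beta_{M})\circ\Phi$. The key point is that $\Phi$ intertwines the semidirect-product bracket $[\cdot,\cdot,\cdot]_{\rho}$ of Proposition \ref{prop11} with $[\cdot,\cdot,\cdot]_{\theta_{f}}$: when one expands $[\Phi(u+x),\Phi(v+y),\Phi(w+z)]_{\theta_{f}}$, the four $f$-dependent terms produced by $\theta_{f}(u,v,w)$ cancel exactly against the three $f$-dependent terms produced by the $\rho$-summands of the bracket (using $\alpha_{M}\beta_{M}^{-1}\circ f=f\circ\alpha\beta^{-1}$), leaving $[u,v,w]+f([u,v,w])+\rho(u,v)(z)-(-1)^{|v||w|}\rho(u,\alpha^{-1}\beta(w))(\alpha_{M}\beta_{M}^{-1}(y))+(-1)^{|u|(|v|+|w|)}\rho(v,\alpha^{-1}\beta(w))(\alpha_{M}\beta_{M}^{-1}(x))$, which is precisely $\Phi\big([u+x,v+y,w+z]_{\rho}\big)$. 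Hence $\Phi$ is an isomorphism from the $3$-Bihom-Lie superalgebra $(\mathfrak{g}\oplus M,[\cdot,\cdot,\cdot]_{\rho},\alpha+\alpha_{M},\beta+\beta_{M})$ of Proposition \ref{prop11} onto $(\mathfrak{g}\oplus M,[\cdot,\cdot,\cdot]_{\theta_{f}},\alpha+\alpha_{M},\beta+\beta_{M})$, so the latter is a $3$-Bihom-Lie superalgebra as well.

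Finally I would read condition (4) off from this. The four conditions defining a $3$-cocycle $\theta$ are nothing but the $M$-components of the structure-map compatibility, of the $3$-Bihom-super-skewsymmetry, and of the $3$-Bihom-super-Jacobi identity of $(\mathfrak{g}\oplus M,[\cdot,\cdot,\cdot]_{\theta},\alpha+\alpha_{M},\beta+\beta_{M})$, evaluated on arguments with vanishing $M$-component; this is exactly the bookkeeping carried out, in the converse direction, in the proposition that introduces the $T_{\theta}$-extension. Applying it with $\theta=\theta_{f}$ yields condition (4), which completes the proof. I expect the shearing step itself to be routine arithmetic; the genuinely laborious alternative is the direct route — substitute the formula for $\theta_{f}$ into each $\theta$-argument of condition (4), invoke the $3$-Bihom-super-Jacobi identity of $\mathfrak{g}$ to reorganise the $f([\cdot,\cdot,\cdot])$-terms and the representation conditions (3) and (4) to reorganise the $\rho$-composition terms, and cancel — where the sign bookkeeping for the five graded arguments and the cyclic-type sum is the main obstacle, which is precisely why reducing everything to Proposition \ref{prop11} is preferable.
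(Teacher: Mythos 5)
Your treatment of conditions (1)--(3) is the same as the paper's direct verification, but for the main condition (4) you take a genuinely different route. The paper substitutes the formula for $\theta_{f}$ into condition (4) and carries out a long expansion, invoking the $3$-Bihom-super-Jacobi identity of $\mathfrak{g}$ and conditions (3)--(4) of the representation to regroup and cancel terms. You instead transport the problem to the semidirect product of Proposition \ref{prop11} via the shearing map $\Phi(u+x)=u+x+f(u)$, observe that $\Phi$ commutes with $\alpha+\alpha_{M}$, $\beta+\beta_{M}$ and intertwines $[\cdot,\cdot,\cdot]_{\rho}$ with $[\cdot,\cdot,\cdot]_{\theta_{f}}$ (this intertwining computation is exactly the one the paper performs later for $\sigma:T_{\theta}(\mathfrak{g})\rightarrow T_{\theta+\theta_{f}}(\mathfrak{g})$, namely your $\Phi$ is that $\sigma$ with $\theta=0$, so there is no circularity), and then read condition (4) off as the $M$-component of the $3$-Bihom-super-Jacobi identity of $(\mathfrak{g}\oplus M,[\cdot,\cdot,\cdot]_{\theta_{f}})$ on arguments with zero $M$-component. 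That extraction is indeed short: for $x_{i}\in\mathfrak{g}$ all $\rho$-terms of the bracket acting on $M$-components vanish, the inner bracket is $[\beta(x_{3}),\beta(x_{4}),\alpha(x_{5})]+\theta_{f}(\beta(x_{3}),\beta(x_{4}),\alpha(x_{5}))$, the outer bracket then produces exactly the two terms on each side of condition (4), and the $\mathfrak{g}$-components cancel by the Jacobi identity of $\mathfrak{g}$. Your approach buys conceptual clarity and reuses the sign bookkeeping already done in Proposition \ref{prop11}; the paper's brute-force computation buys independence from the semidirect-product construction and from any invertibility of $\beta_{M}$.

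Two points you should make explicit. First, the paper states the $T_{\theta}$-extension result only in the forward direction (cocycle implies the extension is a $3$-Bihom-Lie superalgebra) and omits its proof, so the ``bookkeeping in the converse direction'' is not available as a citation; you must actually write out the restriction-and-projection argument sketched above, though it is only a few lines. Second, your reduction uses the bracket $[\cdot,\cdot,\cdot]_{\rho}$, hence $\beta_{M}^{-1}$, and the identity $\alpha_{M}\beta_{M}^{-1}\circ f=f\circ\alpha\beta^{-1}$; this presupposes $\beta_{M}$ (as well as $\alpha,\beta$) invertible, an assumption present in Proposition \ref{prop11} but not literally in Proposition \ref{propk}, whose statement only assumes $\alpha,\beta$ surjective, and the paper's direct proof never needs $\beta_{M}^{-1}$. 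Given that the paper itself writes $\alpha^{-1},\beta^{-1}$ under a mere surjectivity hypothesis, this is a mismatch inherited from the surrounding regularity conventions rather than a flaw in your idea, but it narrows the stated generality and should be flagged.
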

\begin{proof}
For any $x,y,z\in\mathfrak{g}$ we have
$$\begin{array}{lllll}&&\theta_{f}(\alpha(x),\alpha(y),\alpha(z))\\
&=&f([\alpha(x),\alpha(y),\alpha(z)])-\rho(\alpha(x),\alpha(y))f(\alpha(z))
+(-1)^{|y||z|}\rho(\alpha(x),\alpha\alpha^{-1}\beta(z))f(\alpha\alpha\beta^{-1}(y))\\
&&-(-1)^{(|y|+|z|)|x|}\rho(\alpha(y),\alpha\alpha^{-1}\beta(z))f(\alpha\alpha\beta^{-1}(x))\\
&=&\alpha_{M}f([x,y,z])-\alpha_{M}\rho(x,y)f(z)
+(-1)^{|y||z|}\alpha_{M}\rho(x,\alpha^{-1}\beta(z))f(\alpha\beta^{-1}(y))\\&&-(-1)^{(|y|+|z|)|x|}\alpha_{M}\rho(y,\alpha^{-1}\beta(z))f(\alpha\beta^{-1}(x))\\
&=&\alpha_{M}(f([x,y,z])-\rho(x,y)f(z)+(-1)^{|y||z|}\rho(x,\alpha^{-1}\beta(z))f(\alpha\beta^{-1}(y))\\
&&-(-1)^{(|y|+|z|)|x|}\rho(y,\alpha^{-1}\beta(z))f(\alpha\beta^{-1}(x)))\\
&=&\alpha_{M}\theta_{f}(x,y,z).
\end{array}$$
We also have $\theta_{f}(\beta(x),\beta(y),\beta(z))=\beta_{M}\theta_{f}(x,y,z).$

Now, we have
$$\begin{array}{lllll}&&\theta_{f}(\beta(x),\beta(y),\alpha(z))\\
&=&f([\beta(x),\beta(y),\alpha(z)])-\rho(\beta(x),\beta(y))(f(\alpha(z))
+(-1)^{|y||z|}\rho(\beta(x),\alpha\alpha^{-1}\beta(z))f(\alpha\beta^{-1}\beta(y))\\
&&-(-1)^{(|y|+|z|)|x|}\rho(\beta(y),\alpha\alpha^{-1}\beta(z))f(\alpha\beta^{-1}\beta(x))\\
&=&-(-1)^{|x||y|}f([\beta(y),\beta(x),\alpha(z)])+(-1)^{|x||y|}(\rho(\beta(y),\beta(x))(f(\alpha(z))\\
&&-(-1)^{(|y|+|z|)|x|}\rho(\beta(x),\alpha\alpha^{-1}\beta(z))f(\alpha\beta^{-1}\beta(y))
+(-1)^{|y||z|}\rho(\beta(y),\alpha\alpha^{-1}\beta(z))f(\alpha\beta^{-1}\beta(x))\\
&=&-(-1)^{|x||y|}\theta_{f}(\beta(y),\beta(x),\alpha(z))
\end{array}$$
Similarly, we can get $\theta_{f}(\beta(x),\beta(y),\alpha(z))=-(-1)^{|y||z|}\theta_{f}(\beta(x),\beta(z),\alpha(y))$

Finally, $\forall x_{1},x_{2},x_{3},x_{4},x_{5}\in\mathfrak{g}$, we have

$\begin{array}{lllll}&&\theta_{f}(\beta^{2}(x_{1}),\beta^{2}(x_{2}),[\beta(x_{3}),\beta(x_{4}),\beta(x_{5})])+\rho(\beta^{2}(x_{1}),\beta^{2}(x_{2}))\theta_{f}(\beta(x_{3}),\beta(x_{4}),\alpha(x_{5}))\\
&=&f([\beta^{2}(x_{1}),\beta^{2}(x_{2}),[\beta(x_{3}),\beta(x_{4}),\alpha(x_{5})]])
-\rho(\beta^{2}(x_{1}),\beta^{2}(x_{2}))f([\beta(x_{3}),\beta(x_{4}),\alpha(x_{5})])\\
&&+(-1)^{|x_{2}|(|x_{3}|+|x_{4}|+|x_{5}|)}\rho(\beta^{2}(x_{1}),\alpha^{-1}\beta([\beta(x_{3}),\beta(x_{4}),\alpha(x_{5})]))f(\alpha\beta(x_{2})\\
&&-(-1)^{|x_{1}|(|x_{2}|+|x_{3}|+|x_{4}|+|x_{5}|)}\rho(\beta^{2}(x_{2}),\alpha^{-1}\beta([\beta(x_{3}),\beta(x_{4}),\alpha(x_{5})]))f(\alpha\beta(x_{1}))\\
&&+\rho(\beta^{2}(x_{1}),\beta^{2}(x_{2}))(f([\beta(x_{3}),\beta(x_{4}),\alpha(x_{5})])-\rho(\beta(x_{3}),\beta(x_{4}))f(\alpha(x_{5})\\
&&+(-1)^{|x_{4}||x_{5}|}\rho(\beta(x_{3}),\beta(x_{5}))f(\alpha(x_{4})-(-1)^{|x_{3}|(|x_{4}|+|x_{5}|)}\rho(\beta(x_{4}),\beta(x_{5}))f(\alpha(x_{3})\\
&=&f([\beta^{2}(x_{1}),\beta^{2}(x_{2}),[\beta(x_{3}),\beta(x_{4}),\alpha(x_{5})]])\\
&&+(-1)^{|x_{2}|(|x_{3}|+|x_{4}|+|x_{5}|)}\rho(\beta^{2}(x_{1}),\alpha^{-1}\beta([\beta(x_{3}),\beta(x_{4}),\alpha(x_{5})]))f(\alpha\beta(x_{2})\\
&&-(-1)^{|x_{1}|(|x_{2}|+|x_{3}|+|x_{4}|+|x_{5}|)}\rho(\beta^{2}(x_{2}),\alpha^{-1}\beta([\beta(x_{3}),\beta(x_{4}),\alpha(x_{5})]))f(\alpha\beta(x_{1})\\
&&+\rho(\beta^{2}(x_{1}),\beta^{2}(x_{2}))(-\rho(\beta(x_{3}),\beta(x_{4}))f(\alpha(x_{5}))+(-1)^{|x_{4}||x_{5}|}\rho(\beta(x_{3}),\beta(x_{5}))f(\alpha(x_{4}))\\
&&-(-1)^{|x_{3}|(|x_{4}|+|x_{5}|)}\rho(\beta(x_{4}),\beta(x_{5}))f(\alpha(x_{3}))\\
&=&(-1)^{(|x_{4}|+|x_{5}|)(|x_{1}|+|x_{2}|+|x_{3}|)}f([\beta^{2}(x_{4}),\beta^{2}(x_{5}),[\beta(x_{1}),\beta(x_{2}),\alpha(x_{3})]])\\
&&-(-1)^{(|x_{1}|+|x_{5}|)(|x_{1}|+|x_{2}|)+|x_{4}||x_{5}|}f([\beta^{2}(x_{3}),\beta^{2}(x_{5}),[\beta(x_{1}),\beta(x_{2}),\alpha(x_{4})]])\\
&&+(-1)^{(|x_{3}|+|x_{4}|)(|x_{1}|+|x_{2}|)}f([\beta^{2}(x_{3}),\beta^{2}(x_{4}),[\beta(x_{1}),\beta(x_{2}),\alpha(x_{5})]])\\&&-(-1)^{(|x_{1}|+|x_{2}|+|x_{3}|)(|x_{3}|+|x_{4}|+|x_{5}|)+|x_{3}|}\rho(\beta^{2}(x_{4}),\beta^{2}(x_{5}))\rho(\beta(x_{3}),\beta(x_{1}))f(\alpha(x_{2}))\\
&&-(-1)^{(|x_{1}|+|x_{2}|+|x_{5}|)(|x_{3}|+|x_{4}|+|x_{5}|)+|x_{5}|}\rho(\beta^{2}(x_{5}),\beta^{2}(x_{3}))\rho(\beta(x_{4}),\beta(x_{1}))f(\alpha(x_{2}))\\
&&-(-1)^{|x_{1}|(|x_{3}|+|x_{4}|+|x_{5}|)}\rho(\beta^{2}(x_{3}),\beta^{2}(x_{4}))\rho(\beta(x_{5}),\beta(x_{1}))f(\alpha(x_{2}))\\
&&+(-1)^{(|x_{1}|+|x_{2}|+|x_{3}|)(|x_{3}|+|x_{4}|+|x_{5}|+|x_{5}|)+|x_{3}|+|x_{1}||x_{2}|}\rho(\beta^{2}(x_{4}),\beta^{2}(x_{5}))\rho(\beta(x_{3}),\beta(x_{2}))f(\alpha(x_{1}))\\
&&+(-1)^{(|x_{1}|+|x_{2}|+|x_{5}|)(|x_{2}|+|x_{3}|+|x_{4}|)+|x_{2}|+|x_{1}||x_{5}|}\rho(\beta^{2}(x_{5}),\beta^{2}(x_{3}))\rho(\beta(x_{4}),\beta(x_{2}))f(\alpha(x_{1}))\\
&&+(-1)^{(|x_{1}|+|x_{2}|)(|x_{3}|+|x_{4}|+|x_{5}|)+|x_{1}||x_{2}|}\rho(\beta^{2}(x_{3}),\beta^{2}(x_{4}))\rho(\beta(x_{5}),\beta(x_{2}))f(\alpha(x_{1}))\\
&&-(-1)^{(|x_{1}|+|x_{2}|)(|x_{3}|+|x_{4}|)}\rho(\beta^{2}(x_{3}),\beta^{2}(x_{4}))\rho(\beta(x_{1}),\beta(x_{2}))f(\alpha(x_{5}))\\
&&-\rho(\alpha^{-1}\beta([\beta(x_{1}),\beta(x_{2}),\alpha(x_{3})]),\beta^{2}(x_{4}))f(\alpha\beta(x_{5})\\
&&-(-1)^{|x_{3}|(|x_{1}|+|x_{2}|)}\rho(\beta^{2}(x_{3}),\alpha^{-1}\beta([\beta(x_{1}),\beta(x_{2}),\alpha(x_{4})]))f(\alpha\beta(x_{5}))\\
&&+(-1)^{(|x_{1}|+|x_{2}|)(|x_{3}|+|x_{5}|)+|x_{4}||x_{5}|}\rho(\beta^{2}(x_{3}),\beta^{2}(x_{5}))\rho(\beta(x_{1}),\beta(x_{2}))f(\alpha(x_{4}))\\
&&+(-1)^{|x_{4}||x_{5}|}\rho(\alpha^{-1}\beta([\beta(x_{1}),\beta(x_{2}),\alpha(x_{3})]),\beta^{2}(x_{5}))f(\alpha\beta(x_{4})\\
&&+(-1)^{(|x_{1}|+|x_{2}|)|x_{3}|+|x_{4}||x_{5}|}\rho(\beta^{2}(x_{3}),\alpha^{-1}\beta([\beta(x_{1}),\beta(x_{2}),\alpha(x_{5})]))f(\alpha\beta(x_{4}))\\
&&-(-1)^{(|x_{1}|+|x_{2}|+|x_{3}|)(|x_{4}||x_{5}|)}\rho(\beta^{2}(x_{4}),\beta^{2}(x_{5}))\rho(\beta(x_{1}),\beta(x_{2}))f(\alpha(x_{3}))\\
&&-(-1)^{|x_{3}|(|x_{4}|+|x_{5}|)}\rho(\alpha^{-1}\beta([\beta(x_{1}),\beta(x_{2}),\alpha(x_{4})]),\beta^{2}(x_{5}))f(\alpha\beta(x_{3})\\
&&-(-1)^{|x_{1}|(|x_{2}|+|x_{4}|)+|x_{2}||x_{4}|}\rho(\beta^{2}(x_{4}),\alpha^{-1}\beta([\beta(x_{2}),\beta(x_{1}),\alpha(x_{5})]))f(\alpha\beta(x_{3}))\\
&=&(-1)^{(|x_{4}|+|x_{5}|)(|x_{1}|+|x_{2}|+|x_{3}|)}f([\beta^2(x_4)\!,\!\beta^2(x_5)\!,\![\beta(x_1)\!,\!\beta(x_2)\!,\!\alpha(x_3)]])\\&&+(-1)^{|x_{4}|(|x_{1}|+|x_{2}|+|x_{3}|)}\rho(\beta^2(x_4),\alpha^{-1}\beta([\beta(x_1),\beta(x_2),\alpha(x_3)]))f\alpha\beta(x_5)\\
&&-(-1)^{|x_{5}|(|x_{1}|+|x_{2}|+|x_{3}|)}\rho(\beta^2(x_5),\alpha^{-1}\beta([\beta(x_1),\beta(x_2),\alpha(x_3)]))f\alpha\beta(x_4)\\
&&+(-1)^{|x_{5}|(|x_{1}|+|x_{2}|+|x_{3}|+|x_{4}|)+|x_{3}||x_{4}|}\rho(\beta^2(x_4),\beta^2(x_5))\big(-\rho(\beta(x_1),\beta(x_2))f\alpha(x_3)\\&&+(-1)^{|x_{2}||x_{3}|}\rho(\beta(x_1),\beta(x_3))f\alpha(x_2)
-(-1)^{|x_{1}|(|x_{2}|+|x_{3}|)}\rho(\beta(x_2),\beta(x_3))f\alpha(x_1)\big)
\end{array}$
$\begin{array}{lllll}
&&-(-1)^{(|x_{3}|+|x_{5}|)(|x_{1}|+|x_{2}|)+|x_{4}||x_{5}|)}f([\beta^2(x_3)\!,\!\beta^2(x_5)\!,\![\beta(x_1)\!,\!\beta(x_2)\!,\!\alpha(x_4)]])\!\\&&-(-1)^{|x_{3}|(|x_{1}|+|x_{2}|)}\!\rho(\beta^2(x_3)\!,\!\alpha^{-1}\beta([\beta(x_1)\!,\!\beta(x_2)\!,\!\alpha(x_4)]))f\alpha\beta(x_5)\\
&&+(-1)^{(|x_{4}|+|x_{5}|)(|x_{1}|+|x_{2}|+|x_{3}|)}\rho(\beta^2(x_5),\alpha^{-1}\beta([\beta(x_1),\beta(x_2),\alpha(x_4)]))f\alpha\beta(x_3)\\
&&-(-1)^{(|x_{3}|+|x_{5}|)(|x_{1}|+|x_{2}|)+|x_{4}||x_{5}|)}\rho(\beta^2(x_3),\beta^2(x_5))\big(-\rho(\beta(x_1),\beta(x_2))f\alpha(x_4)\\&&+\rho(\beta(x_1),\beta(x_4))f\alpha(x_2)
-(-1)^{|x_{2}||x_{4}|}\rho(\beta(x_2),\beta(x_4))f\alpha(x_1)\big)\\
&&+(-1)^{(|x_{3}|+|x_{4}|)(|x_{1}|+|x_{2}|)}f([\beta^2(x_3)\!,\!\beta^2(x_4)\!,\![\beta(x_1)\!,\!\beta(x_2)\!,\!\alpha(x_5)]])\!\\&&+(-1)^{|x_{3}|(|x_{1}|+|x_{2}|)}\rho(\beta^2(x_3)\!,\!\alpha^{-1}\beta([\beta(x_1)\!,\!\beta(x_2)\!,\!\alpha(x_5)]))f\alpha\beta(x_4)\\
&&-(-1)^{|x_{4}|(|x_{1}|+|x_{2}|)+|x_{3}|(|x_{4}|+|x_{5}|)}\rho(\beta^2(x_4),\alpha^{-1}\beta([\beta(x_1),\beta(x_2),\alpha(x_5)]))f\alpha\beta(x_3)\\
&&+(-1)^{(|x_{3}|+|x_{4}|)(|x_{1}|+|x_{2}|)}\rho(\beta^2(x_3),\beta^2(x_4))\big(-\rho(\beta(x_1),\beta(x_2))f\alpha(x_5)\\&&+(-1)^{|x_{2}||x_{5}|}\rho(\beta(x_1),\beta(x_5))f\alpha(x_2)
-(-1)^{|x_{1}|(|x_{2}||x_{5}|)}\rho(\beta(x_2),\beta(x_5))f\alpha(x_1)\big)\\
&=&(-1)^{(|x_{4}|+|x_{5}|)(|x_{1}|+|x_{2}|+|x_{3}|)}\theta(\beta^{2}(x_{4}),\beta^{2}(x_{5}),[\beta(x_{1}),\beta(x_{2}),\alpha(x_{3})])\\
&&+(-1)^{(|x_{4}|+|x_{5}|)(|x_{1}|+|x_{2}|+|x_{3}|)}\rho(\beta^{2}(x_{4}),\beta^{2}(x_{5}))\theta(\beta(x_{1}),\beta(x_{2}),\alpha(x_{3}))\\
&&-(-1)^{(|x_{3}|+|x_{5}|)(|x_{1}|+|x_{2}|)+|x_{4}||x_{5}|}\theta(\beta^{2}(x_{3}),\beta^{2}(x_{5}),[\beta(x_{1}),\beta(x_{2}),\alpha(x_{4})])\\
&&-(-1)^{(|x_{3}|+|x_{5}|)(|x_{1}|+|x_{2}|)+|x_{4}||x_{5}|}\rho(\beta^{2}(x_{3}),\beta^{2}(x_{5}))\theta(\beta(x_{1}),\beta(x_{2}),\alpha(x_{4}))\\
&&+(-1)^{(|x_{3}|+|x_{4}|)(|x_{1}|+|x_{2}|)}\theta(\beta^{2}(x_{3}),\beta^{2}(x_{4}),[\beta(x_{1}),\beta(x_{2}),\alpha(x_{5})])\\
&&+(-1)^{(|x_{3}|+|x_{4}|)(|x_{1}|+|x_{2}|)}\rho(\beta^{2}(x_{3}),\beta^{2}(x_{4}))\theta(\beta(x_{1}),\beta(x_{2}),\alpha(x_{5})).
\end{array}$

\vspace{0.5cm}
Then $\theta_{f}$ is a $3$-cocycle associated with $\rho$.
\end{proof}
\begin{cor}
Under the above notations, $\theta+\theta_{f}$ is a $3-$cocycle associated with $\rho$.
\end{cor}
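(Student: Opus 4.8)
The plan is to use the fact that the four conditions defining a $3$-cocycle associated with $\rho$ are all linear in the cochain. First I would observe that if $\varphi,\psi:\mathfrak{g}\times\mathfrak{g}\times\mathfrak{g}\longrightarrow M$ are two $3$-linear maps that are each $3$-cocycles associated with $\rho$, then so is $\varphi+\psi$. Indeed, conditions (1) and (2) read $\alpha_{M}\circ\varphi=\varphi\circ(\alpha\times\alpha\times\alpha)$ and $\beta_{M}\circ\varphi=\varphi\circ(\beta\times\beta\times\beta)$, which are visibly additive in $\varphi$ since $\alpha_{M},\beta_{M}$ are linear; condition (3) is a $\pm$ combination (with parity-dependent signs that do not involve $\varphi$) of evaluations of $\varphi$, hence additive; and condition (4) is a $\pm$ combination --- again with coefficients that are parity-dependent signs times the fixed linear operators $\rho(\beta^{2}(x_{i}),\beta^{2}(x_{j}))$ --- of terms of the two shapes $\varphi(\ldots,[\ldots])$ and $\rho(\ldots)\varphi(\ldots)$, each of which is linear in $\varphi$. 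Therefore the left- and right-hand sides of each of (1)--(4) evaluated on $\varphi+\psi$ are the sums of the corresponding sides for $\varphi$ and for $\psi$, and the required equalities follow by adding the identities that hold for $\varphi$ and for $\psi$ separately.

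Next I would simply apply this observation to $\varphi=\theta$ and $\psi=\theta_{f}$. Here $\theta$ is a $3$-cocycle associated with $\rho$ by hypothesis (it is the datum from which the $T_{\theta}$-extension of the preceding Proposition is built), and $\theta_{f}$ is a $3$-cocycle associated with $\rho$ by the immediately preceding Proposition, whose standing hypotheses --- $\alpha$ and $\beta$ surjective, $f\circ\alpha=\alpha_{M}\circ f$, $f\circ\beta=\beta_{M}\circ f$ --- are precisely ``the above notations''. Hence $\theta+\theta_{f}$ satisfies (1)--(4), i.e.\ it is a $3$-cocycle associated with $\rho$, which is the claim. The same reasoning shows more generally that the $3$-cocycles associated with $\rho$ form a linear subspace of the space of $3$-linear maps $\mathfrak{g}\times\mathfrak{g}\times\mathfrak{g}\to M$, so $a\theta+b\theta_{f}$ is a cocycle for all scalars $a,b$.

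There is no real obstacle here: the entire content is the additivity of (1)--(4), which is immediate by inspection once one notes that the sign factors $(-1)^{(|x_{i}|+|x_{j}|)(\cdots)}$ and the twisting operators $\rho(\beta^{2}(x_{i}),\beta^{2}(x_{j}))$ appearing in condition (4) depend only on the arguments and on $\rho$, never on the cochain, so they distribute over the sum $\theta+\theta_{f}$. In particular no computation beyond those already carried out for $\theta$ and for $\theta_{f}$ is needed, which is exactly why the statement is recorded as a corollary.
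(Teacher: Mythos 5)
Your argument is correct and is exactly the reasoning the paper intends: it states the corollary without proof precisely because the four defining conditions of a $3$-cocycle are linear in the cochain, so the sum of the cocycle $\theta$ and the cocycle $\theta_{f}$ (the latter guaranteed by the preceding proposition) is again a cocycle. Nothing further is needed.
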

\begin{prop}
Under the above notations, $\sigma:T_{\theta}(\mathfrak{g})\rightarrow T_{\theta+\theta_{f}}(\mathfrak{g})$ is a isomorphism of $3$-Bihom- Lie superalgebras, where
$\sigma(v+x)=v+f(v)+x,~\forall v\in\mathfrak{g},~x\in M$.
\end{prop}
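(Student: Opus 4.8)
The plan is to verify that $\sigma$ is an even linear bijection and that it respects all the structure of a $3$-Bihom-Lie superalgebra; a bijective morphism is then automatically an isomorphism, since the inverse $v+x\mapsto v-f(v)+x$ inherits the same identities. Bijectivity is immediate because $T_{\theta}(\mathfrak{g})$ and $T_{\theta+\theta_{f}}(\mathfrak{g})$ share the underlying $\mathbb{Z}_{2}$-graded space $\mathfrak{g}\oplus M$, the map $\sigma$ is even (as $f$ is even), and $v+x\mapsto v-f(v)+x$ is a two-sided inverse. Compatibility with the twist maps is a one-line computation: for $v\in\mathfrak{g}$, $x\in M$,
\[
\sigma\big((\alpha+\alpha_{M})(v+x)\big)=\alpha(v)+f(\alpha(v))+\alpha_{M}(x)=\alpha(v)+\alpha_{M}(f(v))+\alpha_{M}(x)=(\alpha+\alpha_{M})\big(\sigma(v+x)\big),
\]
using $f\circ\alpha=\alpha_{M}\circ f$, and identically $\sigma\circ(\beta+\beta_{M})=(\beta+\beta_{M})\circ\sigma$ from $f\circ\beta=\beta_{M}\circ f$.

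The core of the proof is the bracket identity $\sigma\big([u_{1}+x_{1},u_{2}+x_{2},u_{3}+x_{3}]_{\theta}\big)=[\sigma(u_{1}+x_{1}),\sigma(u_{2}+x_{2}),\sigma(u_{3}+x_{3})]_{\theta+\theta_{f}}$ for all $u_{i}\in\mathfrak{g}$, $x_{i}\in M$. Expanding the left side with the defining formula for $[.,.,.]_{\theta}$ and applying $\sigma$ produces $[u_{1},u_{2},u_{3}]$ in the $\mathfrak{g}$-slot and, in the $M$-slot, $f([u_{1},u_{2},u_{3}])+\theta(u_{1},u_{2},u_{3})$ together with the three $\rho$-terms in $x_{3},x_{2},x_{1}$. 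On the right side I write $\sigma(u_{i}+x_{i})=u_{i}+\big(f(u_{i})+x_{i}\big)$ and expand with the formula for $[.,.,.]_{\theta+\theta_{f}}$: the three $\rho$-terms split, the $x_{i}$-parts matching those on the left verbatim, while the $f(u_{i})$-parts, after rewriting $\alpha_{M}\beta_{M}^{-1}\circ f=f\circ\alpha\beta^{-1}$ (a consequence of $f\alpha=\alpha_{M}f$, $f\beta=\beta_{M}f$ and the invertibility hypotheses in force), combine with $\theta_{f}(u_{1},u_{2},u_{3})$ to yield exactly $f([u_{1},u_{2},u_{3}])$ — this is precisely the defining formula for $\theta_{f}$ in Proposition \ref{propk}. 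Hence the two sides coincide, $\sigma$ is a bracket homomorphism, and with the first paragraph $\sigma$ is an isomorphism of $3$-Bihom-Lie superalgebras.

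The argument is essentially bookkeeping, and the main points to watch are: keeping the Koszul signs straight when substituting $f(u_{i})+x_{i}$ into the $M$-arguments of $[.,.,.]_{\theta+\theta_{f}}$ (these match because $f$ is even, so $f(u_{i})$ has the parity of $u_{i}$), and checking that $\alpha,\beta,\beta_{M}$ are invertible wherever $\alpha^{-1},\beta^{-1},\beta_{M}^{-1}$ appear, which is inherited from the surjectivity/regularity assumptions under which $\theta_{f}$, the $T_{\theta}$-extension, and (via the preceding Corollary) the $T_{\theta+\theta_{f}}$-extension were constructed. It is worth noting that this computation uses only the definitions of the brackets and of $\theta_{f}$, not the cocycle identity for $\theta$ itself; the cocycle condition is needed only to know that the two $T$-extensions are genuine $3$-Bihom-Lie superalgebras in the first place.
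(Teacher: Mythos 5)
Your proposal is correct and follows essentially the same route as the paper: check $\sigma\circ(\alpha+\alpha_M)=(\alpha+\alpha_M)\circ\sigma$ and $\sigma\circ(\beta+\beta_M)=(\beta+\beta_M)\circ\sigma$ from $f\alpha=\alpha_M f$, $f\beta=\beta_M f$, then expand $[\sigma(\cdot),\sigma(\cdot),\sigma(\cdot)]_{\theta+\theta_f}$ and use the defining formula for $\theta_f$ (with $\alpha_M\beta_M^{-1}f=f\alpha\beta^{-1}$) so that the $f$-terms collapse to $f([u_1,u_2,u_3])$, giving $\sigma([\cdot,\cdot,\cdot]_\theta)$. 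Your added remarks on the explicit inverse and on where invertibility is used are fine and do not change the argument.
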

\begin{proof}
It is clear that $\sigma$ is a bijection.\\
Let $v_{i}\in\mathfrak{g},~x_{i}\in M,~i=1,2,3,$

First, we have $\sigma\circ(\alpha+\alpha_{M})(v_{1}+x_{1})=\sigma(\alpha(v_{1})+\alpha_{M}(x_{1}))=\alpha(v_{1})
+f\circ\alpha(v_{1})+\alpha_{M}(x_{1})=\alpha(v_{1})+\alpha_{M}\circ f(v_{1})+\alpha_{M}(x_{1})=(\alpha+\alpha_{M})(v_{1}+f(v_{1})+x_{1})=(\alpha+\alpha_{M})\circ\sigma(v_{1}+x_{1}),$ then $\sigma\circ(\alpha+\alpha_{M})=(\alpha+\alpha_{M})\circ\sigma.$

 Similarly, $\sigma\circ(\beta+\beta_{M})=(\beta+\beta_{M})\circ\sigma.$

Now, we have

$$\begin{array}{lllllll}&&[\sigma(v_{1}+x_{1}),\sigma(v_{1}+x_{1}),\sigma(v_{1}+x_{1})]_{\theta+\theta_{f}}\\
&=&[v_{1}+f(v_{1})+x_{1},v_{2}+f(v_{2})+x_{2},v_{3}+f(v_{3})+x_{3}]_{\theta+\theta_{f}}\\
&=&[v_{1},v_{2},v_{3}]+(\theta+\theta_{f})(v_{1},v_{2},v_{3})+\rho(v_{1},v_{2})(f(v_{3})+x_{3})\\
&&-(-1)^{|v_{2}||v_{3}|}\rho(v_{1},\alpha^{-1}\beta(v_{3}))\alpha_{M}\beta_{M}^{-1}(f(v_{2})+x_{2})\\
&&+(-1)^{|v_{1}|(|v_{2}|+|v_{3}|)}\rho(v_{2},\alpha^{-1}\beta(v_{3}))\alpha_{M}\beta_{M}^{-1}(f(v_{1})+x_{1})\\
&=&[v_{1},v_{2},v_{3}]+\theta (v_{1},v_{2},v_{3})+f([v_{1},v_{2},v_{3}])\\&&-\rho(v_{1},v_{2})f(v_{3})+(-1)^{|v_{2}||v_{3}|}\rho(v_{1},\alpha^{-1}\beta(v_{3}))f\circ\alpha\beta^{-1}(v_{2})\\
&&-(-1)^{|v_{1}|(|v_{2}||v_{3}|)}\rho(v_{2},\alpha^{-1}\beta(v_{3}))f\circ\alpha\beta^{-1}(v_{1})\\
&&+\rho(v_{1},v_{2})f(v_{3})-(-1)^{|v_{2}||v_{3}|}\rho(v_{1},\alpha^{-1}\beta(v_{3}))\alpha_{M}\beta_{M}^{-1}f(v_{2})\\
&&+(-1)^{|v_{1}|(|v_{2}|+|v_{3}|)}\rho(v_{2},\alpha^{-1}\beta(v_{3}))\alpha_{M}\beta_{M}^{-1}f(v_{1})\\
&&+\rho(v_{1},v_{2})(x_{3})-(-1)^{|v_{2}||v_{3}|}\rho(v_{1},\alpha^{-1}\beta(v_{3}))\alpha_{M}\beta_{M}^{-1}(x_{2})\\
&&+(-1)^{|v_{1}|(|v_{2}|+|v_{3}|)}\rho(v_{2},\alpha^{-1}\beta(v_{3}))\alpha_{M}\beta_{M}^{-1}(x_{1})\\
&=&[v_{1},v_{2},v_{3}]+f([v_{1},v_{2},v_{3}])+\theta(v_{1},v_{2},v_{3})+\rho(v_{1},v_{2})(x_{3})\\
&&-(-1)^{|v_{2}||v_{3}|}\rho(v_{1},\alpha^{-1}\beta(v_{3}))\alpha_{M}\beta_{M}^{-1}(x_{2})
\end{array}$$
$$\begin{array}{lllll}
&&+(-1)^{|v_{1}|(|v_{2}|+|v_{3}|)}\rho(v_{2},\alpha^{-1}\beta(v_{3}))\alpha_{M}\beta_{M}^{-1}(x_{1})\\
&=&\sigma([v_{1},v_{2},v_{3}]+\theta(v_{1},v_{2},v_{3})+\rho(v_{1},v_{2})(x_{3})\\&&-(-1)^{|v_{2}||v_{3}|}\rho(v_{1},\alpha^{-1}\beta(v_{3}))\alpha_{M}\beta_{M}^{-1}(x_{2})\\
&&+(-1)^{|v_{1}|(|v_{2}|+|v_{3}|)}\rho(v_{2},\alpha^{-1}\beta(v_{3}))\alpha_{M}\beta_{M}^{-1}(x_{1}))\\
&=&\sigma([v_{1}+x_{1},v_{2}+x_{2},v_{3}+x_{3}]_{\theta}).
\end{array}$$
Then $\sigma:T_{\theta}(\mathfrak{g})\rightarrow T_{\theta+\theta_{f}}(\mathfrak{g})$ is a isomorphism of $3$-Bihom- Lie superalgebras.
\end{proof}
\section{$T^{\ast}_{\theta}$-EXTENSIONS OF $3$-BIHOM-LIE SUPERALGEBRAS}
The method of $T^{\ast}$-extension was introduced in \cite{B} and has already been used for $3$-Hom Lie
algebras in \cite{Lii} and $3$-Bihom-Lie algebras in \cite{Li C}.
\begin{defn}
Let $(\mathfrak{g},[.,.,.],\alpha,\beta)$ be a $3$-Bihom-Lie superalgebra. A bilinear form $f$ on $\mathfrak{g}$ is said to be nondegenerate if
$$\mathfrak{g}^{\perp}=\{x\in\mathfrak{g}\mid~f(x,y)=0,~\forall y\in\mathfrak{g}\}=\{0\};$$
$\alpha\beta-$invariant if  for all $x_{1},x_{2},x_{3},x_{4}\in\mathfrak{g},$
$$f([\beta(x_{1}),\beta(x_{2}),\alpha(x_{3})],\alpha(x_{4}))=f(\beta(x_{1}),[\beta(x_{2}),\alpha(x_{3}),\alpha(x_{4})]);$$
supersymmetric if for all $x,y\in\mathfrak{g}$,
$$f(x,y)=(-1)^{|x||y|}f(y,x).$$
A subspace $I$ of $\mathfrak{g}$ is called isotropic if $I\subseteq I^{\perp}$.
\end{defn}
\begin{defn}
Let $(\mathfrak{g},[.,.,.],\alpha,\beta)$ be a $3-$Bihom-Lie superalgebra over a field $\mathbb{K}$. If $\mathfrak{g}$ admits a nondegenerate, $\alpha\beta-$invariant and supersymmetric
bilinear form $f$ such that $\alpha,\beta$ are $f-$symmetric (i.e. $f(\alpha(x),y)=f(x,\alpha(y)),~f(\beta(x),y)=f(x,\beta(y))$), then we call $(\mathfrak{g},f,\alpha,\beta)$ a quadratic $3$-Bihom-Lie superalgebra.

Let $(\mathfrak{g}',[.,.,.]',\alpha',\beta')$ be another $3-$Bihom-Lie superalgebra. Two quadratic $3$-Bihom-Lie superalgebras $(\mathfrak{g},f,\alpha,\beta)$ and $(\mathfrak{g}',f',\alpha',\beta')$ are said to be isometric if there exists a superalgebra isomorphism $\phi:\mathfrak{g}\rightarrow\mathfrak{g}'$ such that $f(x,y)=f'(\phi(x),\phi(y)),~\forall x,y\in\mathfrak{g}$.
\end{defn}
\begin{thm}
Let $(\mathfrak{g},[.,.,.],\alpha,\beta)$ be a $3$-Bihom-Lie superalgebra and $(M,\rho,\alpha_{M},\beta_{M})$
be a
representation of $\mathfrak{g}$. Let us consider $M^{\ast}$ the dual space of $M$ and $\widetilde{\alpha}_{M},\widetilde{\beta}_{M}:M^{\ast}\rightarrow M^{\ast}$
two homomorphisms defined by $\widetilde{\alpha}_{M}(f)=f\circ\alpha_{M},~\widetilde{\beta}_{M}(f)=f\circ\beta_{M},~\forall f\in M^{\ast}$. Then
the superskewsymmetry linear map $\widetilde{\rho}:\mathfrak{g}\times\mathfrak{g}\rightarrow End(M^{\ast})$, defined by $\widetilde{\rho}(x,y)(f)=-(-1)^{|f|(|x|+|y|)}f\circ\rho(x,y),~\forall f\in M^{\ast},
x,y\in\mathfrak{g}$,
is a representation of $\mathfrak{g}$ on $(M^{\ast},\widetilde{\rho},\widetilde{\alpha}_{M},\widetilde{\beta}_{M})$ if and only if for
every $x, y, u,v\in\mathfrak{g}$,
\begin{enumerate}
\item
$\quad\alpha_{M}\circ\rho(\alpha(x),\alpha(y))=\rho(x,y)\circ\alpha_{M},$
\item
$\quad\beta_{M}\circ\rho(\beta(x),\beta(y))=\rho(x,y)\circ\beta_{M},$
\item
$\quad\rho(x,y)\rho(\alpha\beta(u),\alpha\beta(v))$\\
$=(-1)^{(|x|+|y|)(|u|+|v|)}\rho(\alpha(u),\alpha(v))\rho(\beta(x),\beta(y))-(-1)^{(|x|+|y|)(|u|+|v|)}\beta_{M}\rho([\beta(u),\beta(v),x],\beta(y))$\\
$~~~-(-1)^{|y|(|u|+|v|)}\beta_{M}\rho(\beta(x),[\beta(u),\beta(v),y]),$
\item
$\quad\beta_M\rho([\beta(u),\beta(v),x],\beta(y))\\
    =-(-1)^{|y|(|v|+|x|)}\rho(\alpha(u),y)\rho(\alpha\beta(v),\beta(x))\!-(-1)^{|u|(|x|+|y|+|v|)+|x||y|}\!\rho(\alpha(v),y)\rho(\beta(x),\alpha\beta(u))\!\\
    ~~~-(-1)^{(|x|+|y|)(|u|+|v|)}\!\rho(x,y)\rho(\alpha\beta(u),\alpha\beta(v)).$
\end{enumerate}
\end{thm}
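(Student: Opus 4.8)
The strategy is to check the four axioms in the definition of a representation of a $3$-Bihom-Lie superalgebra for the $4$-tuple $(M^{\ast},\widetilde{\rho},\widetilde{\alpha}_{M},\widetilde{\beta}_{M})$, and to translate each of them, one at a time, into an identity for $\rho$. First one observes that the ``free'' part of the structure is automatic: $\widetilde{\alpha}_{M}$ and $\widetilde{\beta}_{M}$ commute, since $\widetilde{\alpha}_{M}\widetilde{\beta}_{M}(g)=g\circ\beta_{M}\circ\alpha_{M}=g\circ\alpha_{M}\circ\beta_{M}=\widetilde{\beta}_{M}\widetilde{\alpha}_{M}(g)$, and $\widetilde{\rho}$ is super-skewsymmetric because $\rho$ is and the scalar $-(-1)^{|g|(|x|+|y|)}$ is symmetric in $x$ and $y$. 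Thus only conditions (1)--(4) are at stake, and a single device handles all four.

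That device is the ``twisted transpose'' $T:\mathrm{End}(M)\to\mathrm{End}(M^{\ast})$ defined on a homogeneous $\phi$ by $T(\phi)(g)=-(-1)^{|g||\phi|}\,g\circ\phi$. Since $\rho(x,y)$ has parity $|x|+|y|$, we have $\widetilde{\rho}(x,y)=T(\rho(x,y))$, while $\widetilde{\alpha}_{M},\widetilde{\beta}_{M}$ are the ordinary transposes of $\alpha_{M},\beta_{M}$. A short Koszul-sign computation yields the composition rules
$$T(\phi)\circ T(\psi)=-(-1)^{|\phi||\psi|}\,T(\psi\circ\phi),\qquad T(\phi)\circ\widetilde{\beta}_{M}=T(\beta_{M}\circ\phi),\qquad \widetilde{\beta}_{M}\circ T(\phi)=T(\phi\circ\beta_{M}),$$
together with the analogous ones with $\alpha_{M}$ in place of $\beta_{M}$. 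Moreover $T$ is injective: if $g\circ\phi=0$ for every $g\in M^{\ast}$, then $\phi=0$. Consequently, after expanding both sides of any representation axiom for $(M^{\ast},\widetilde{\rho},\widetilde{\alpha}_{M},\widetilde{\beta}_{M})$ by these rules, the axiom takes the shape $T(A)=T(B)$ and is therefore equivalent to the identity $A=B$ in $\mathrm{End}(M)$.

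Applying this: axiom~(1) of the representation, $\widetilde{\rho}(\alpha(x),\alpha(y))\circ\widetilde{\alpha}_{M}=\widetilde{\alpha}_{M}\circ\widetilde{\rho}(x,y)$, turns into $T(\alpha_{M}\circ\rho(\alpha(x),\alpha(y)))=T(\rho(x,y)\circ\alpha_{M})$, i.e.\ exactly condition~(1); axiom~(2) yields condition~(2) in the same way. For axioms~(3) and~(4) one uses the first composition rule on every product $\widetilde{\rho}\circ\widetilde{\rho}$ and the second on every term ending in $\widetilde{\beta}_{M}$; the decisive feature is that $T$ reverses the order of composition, which is precisely why conditions~(3)--(4) of the Theorem display the products $\rho(x,y)\rho(\alpha\beta(u),\alpha\beta(v))$, $\rho(\alpha(u),y)\rho(\alpha\beta(v),\beta(x))$, and so on, in the order opposite to the representation axioms, with the signs adjusted accordingly. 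After cancelling $T$ and multiplying through by the appropriate global sign, what remains is to verify that the Koszul signs coincide; here one uses repeatedly that $(-1)^{(|x|+|y|)(|u|+|v|)}=(-1)^{(|u|+|v|)(|x|+|y|)}$, that even contributions such as $2|x|(|u|+|v|)$ drop out, and that the first slot of $\rho([\beta(u),\beta(v),x],\beta(y))$ carries parity $|u|+|v|+|x|$. This sign reconciliation is the only genuine obstacle: it is entirely routine but must be executed with care, since a single stray Koszul factor would alter the stated conditions. Once it is done, axioms~(3) and~(4) become precisely conditions~(3) and~(4), and, $T$ being injective, all four equivalences hold in both directions, which proves the Theorem.
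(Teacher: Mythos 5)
Your proposal is correct and follows essentially the same route as the paper: the paper verifies each of the four representation axioms for $(M^{\ast},\widetilde{\rho},\widetilde{\alpha}_{M},\widetilde{\beta}_{M})$ by evaluating both sides on an arbitrary $f\in M^{\ast}$ and reading off the equivalent identity for $\rho$, which is exactly what your twisted transpose $T$ with its composition-reversal rules and injectivity accomplishes in a tidier packaging. The deferred sign reconciliation does go through: applying your rules to the axioms yields precisely conditions (1)--(4) with the stated Koszul factors.
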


\begin{proof}Let $f \in M^*$, $x,y,u,v \in\mathfrak{g}$.
First, we have
$$
(\tilde\rho(\alpha(u),\alpha(v))\circ \tilde{\alpha}_M)(f)=- (-1)^{|f|(|u|+|v|)}\tilde{\alpha}_M(f)\circ \rho(\alpha(u),\alpha(v))=-(-1)^{|f|(|u|+|v|)}f\circ \alpha_M\circ \rho(\alpha(u),\alpha(v))$$ and
$\tilde{\alpha}_M\circ\tilde\rho(u,v)(f)=-(-1)^{|f|(|u|+|v|)}\tilde{\alpha}_M(f\circ\rho(u,v))=-(-1)^{|f|(|u|+|v|)}f\circ\rho(u,v)\circ\alpha_M,$ which implies $$\tilde\rho(\alpha(u),\alpha(v))\circ \tilde{\alpha}_M=\tilde{\alpha}_M\circ\tilde\rho(u,v)\Leftrightarrow \alpha_M\circ \rho(\alpha(u),\alpha(v))=\rho(u,v)\circ\alpha_M.$$
Similarly, $\tilde\rho(\beta(u),\beta(v))\circ \tilde{\beta}_M=\tilde{\beta}_M\circ\tilde\rho(u,v)\Leftrightarrow \beta_M\circ \rho(\beta(u),\beta(v))=\rho(u,v)\circ\beta_M.$

Then we can get\\

$
\begin{array}{llll}\tilde\rho(\alpha\beta(u),\alpha\beta(v))\circ\tilde\rho(x,y)(f)&=&-(-1)^{|f|(|x|+|y|)}\tilde\rho(\alpha\beta(u),\alpha\beta(v))(f\rho(x,y))\\
&=&(-1)^{|f|(|x|+|y|+|u|+|v|)+(|u|+|v|)(|x|+|y|)}f\rho(x,y)\rho(\alpha\beta(u),\alpha\beta(v))\end{array}$

 and
\begin{eqnarray*}
&&\big(\tilde\rho(\beta(x),\beta(y))\circ\tilde\rho(\alpha(u),\alpha(v))+\tilde\rho([\beta(u),\beta(v),x],\beta(y))\circ\tilde\beta_M\\
&&\,+\tilde\rho(\beta(x),[\beta(u),\beta(v),y])\circ\tilde\beta_M\big)(f)\\
&=&(-1)^{|f|(|x|+|y|+|u|+|v|)+(|u|+|v|)(|x|+|y|)}\!f\rho(\alpha(u),\alpha(v)\!)\!\rho(\beta(x),\beta(y)\!)\!\!\\
&&-(-1)^{|f|(|x|+|y|+|u|+|v|)}\!f\beta_M\rho([\beta(u),\beta(v),x],\beta(y)\!)\!\!\\
&&-(-1)^{|f|(|x|+|y|+|u|+|v|)}\!f\beta_M\rho(\beta(x),[\beta(u),\beta(v),y]),
\end{eqnarray*}
which implies
\begin{eqnarray*}
&&\tilde\rho(\alpha\beta(u),\alpha\beta(v))\circ\tilde\rho(x,y)\\
&=&(-1)^{(|x|+|y|)(|u|+|v|)}\tilde\rho(\beta(x),\!\beta(y)\!)\!\circ\!\tilde\rho(\alpha(u),\!\alpha(v)\!)\!+\!\tilde\rho([\beta(u),\!\beta(v),\!x],\!\beta(y)\!)\!\circ\!\tilde\beta_M\\
\!&&+\!(-1)^{|x|(|u|+|v|)}\tilde\rho(\beta(x),[\beta(u),\beta(v),y])\!\circ\!\tilde\beta_M
\end{eqnarray*}
if and only if
\begin{eqnarray*}
&&\rho(x,y)\rho(\alpha\beta(u),\alpha\beta(v))\\
&=&(-1)^{(|x|+|y|)(|u|+|v|)}\rho(\alpha(u),\alpha(v))\rho(\beta(x),\beta(y))-(-1)^{(|x|+|y|)(|u|+|v|)}\beta_{M}\rho([\beta(u),\beta(v),x],\beta(y))\\
&&-(-1)^{|y|(|u|+|v|)}\beta_{M}\rho(\beta(x),[\beta(u),\beta(v),y]),
\end{eqnarray*}
In the same way,
\begin{eqnarray*}
&&\tilde\rho([\beta(u),\beta(v),x],\beta(y))\tilde\beta_M\\
&=&(-1)^{|u|(|x|+|v|)}\tilde\rho(\alpha\beta(v),\beta(x)\!)\!\circ\!\tilde\rho(\alpha(u),y\!)\!\!+\!(-1)^{|x|(|u|+|v|)}\tilde\rho(\beta(x),\alpha\beta(u)\!)\!\circ\!\tilde\rho(\alpha(v),y)\\
\!&&+\!\tilde\rho(\alpha\beta(u),\alpha\beta(v))\!\circ\!\tilde\rho(x,y)
\end{eqnarray*}
if and only if\\

$\beta_M\rho([\beta(u),\beta(v),x],\beta(y))\\
    =-(-1)^{|y|(|v|+|x|)}\rho(\alpha(u),y)\rho(\alpha\beta(v),\beta(x))\!-(-1)^{|u|(|x|+|y|+|v|)+|x||y|}\!\rho(\alpha(v),y)\rho(\beta(x),\alpha\beta(u))\\
    ~~~~-(-1)^{(|x|+|y|)(|u|+|v|)}\!\rho(x,y)\rho(\alpha\beta(u),\alpha\beta(v)).$\\

That shows the theorem holds.
\end{proof}
\begin{cor}
Let $\ad$ be the adjoint representation of a $3$-Bihom-Lie superalgebra $(\mathfrak{g},[\cdot,\cdot,\cdot],\\
\alpha,\beta)$. Let us consider the bilinear map $\ad^*:\mathfrak{g}\times \mathfrak{g}\rightarrow \mathrm{End}(\mathfrak{g}^{*})$ defined by $$\ad^*(x,y)(f)=-(-1)^{|f|(|x|+|y|)}f\circ \ad(x,y),~\forall\, x,y\in \mathfrak{g}.$$ Then $\ad^*$ is a representation of $\mathfrak{g}$ on $(\mathfrak{g}^{*},\ad^*,\tilde{\alpha},\tilde\beta)$ if and only if
\begin{enumerate}
\item $\alpha\circ \ad(\alpha(x),\alpha(y))=\ad(x,y)\circ \alpha,$
\item $\beta\circ \ad(\beta(x),\beta(y))=\ad(x,y)\circ \beta,$
\item
$ $
$\begin{array}{llllll}&&ad(x,y)\ad(\alpha\beta(u),\alpha\beta(v))\\
&=&(-1)^{(|x|+|y|)(|u|+|v|)}\ad(\alpha(u),\alpha(v))\ad(\beta(u),\beta(v))\!\!-(-1)^{(|x|+|y|)(|u|+|v|)}\beta\ad([\beta(u),\beta(v),x],\beta(y))\\
&&-(-1)^{|y|(|u|+|v|)}\beta\ad(\beta(x),[\beta(u),\beta(v),y]),\end{array}$
\item $\quad\beta\ad([\beta(u),\beta(v),x],\beta(y))$\\
    $=\!-(-1)^{|y|(|v|+|x|)}\ad(\alpha(u),y)\ad(\alpha\beta(v),\beta(x)\!)-(-1)^{|u|(|x|+|y|+|v|)+|x||y|}\ad(\alpha(v),y)\ad(\beta(x),\alpha\beta(u)\!)$\\
    $~~~-(-1)^{(|x|+|y|)(|u|+|v|)}\ad(x,y)\ad(\alpha\beta(u),\alpha\beta(v)\!),$
\end{enumerate}
We call the representation $\ad^*$ the coadjoint representation of $\mathfrak{g}$.
\end{cor}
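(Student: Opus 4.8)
The plan is to derive this Corollary directly from the preceding Theorem by specializing the representation $(M,\rho,\alpha_{M},\beta_{M})$ to the adjoint representation of $\mathfrak{g}$ on itself: take $M=\mathfrak{g}$, $\rho=\ad$ with $\ad(x,y)(z)=[x,y,z]$, $\alpha_{M}=\alpha$ and $\beta_{M}=\beta$.

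First I would confirm that $(\mathfrak{g},\ad,\alpha,\beta)$ is genuinely a representation of $(\mathfrak{g},[\cdot,\cdot,\cdot],\alpha,\beta)$ in the sense of the definition above: the $3$-Bihom-super-skewsymmetry of the bracket makes $\ad:\mathfrak{g}\times\mathfrak{g}\to\End(\mathfrak{g})$ a super-skewsymmetric bilinear map; the multiplicativity identities $\alpha([x,y,z])=[\alpha(x),\alpha(y),\alpha(z)]$ and $\beta([x,y,z])=[\beta(x),\beta(y),\beta(z)]$ yield conditions (1) and (2) of that definition; and, evaluating both sides on an arbitrary $w\in\mathfrak{g}$, conditions (3) and (4) reduce to the $3$-Bihom-super-Jacobi identity (in the $\circlearrowleft$-form stated in the Remarks) together with the skewsymmetry relations. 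This is the only place where the structure identities of $\mathfrak{g}$ enter, and the bookkeeping of signs and of the twists $\alpha,\beta,\beta^{2}$ on the two sides is where I expect essentially all of the work to be.

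Once that is done, I would simply translate the Theorem into this setting. Here $M^{\ast}=\mathfrak{g}^{\ast}$; the maps $\widetilde{\alpha}_{M},\widetilde{\beta}_{M}$ become the maps $\widetilde{\alpha},\widetilde{\beta}$ of the Corollary, with $\widetilde{\alpha}(f)=f\circ\alpha$ and $\widetilde{\beta}(f)=f\circ\beta$; and the super-skewsymmetric map $\widetilde{\rho}$ of the Theorem coincides with $\ad^{\ast}$, since $\widetilde{\rho}(x,y)(f)=-(-1)^{|f|(|x|+|y|)}f\circ\ad(x,y)=\ad^{\ast}(x,y)(f)$. The Theorem then states that $\ad^{\ast}$ is a representation of $\mathfrak{g}$ on $(\mathfrak{g}^{\ast},\ad^{\ast},\widetilde{\alpha},\widetilde{\beta})$ if and only if its four conditions (1)--(4) hold, and substituting $\rho=\ad$, $\alpha_{M}=\alpha$, $\beta_{M}=\beta$ transforms these into precisely conditions (1)--(4) in the statement of the Corollary. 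No computation beyond the verification in the previous paragraph is required, so the only genuine obstacle is that sign-and-twist check showing $\ad$ satisfies the representation axioms.
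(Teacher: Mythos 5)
Your proposal is correct and is exactly the route the paper intends: the corollary is the preceding theorem specialized to $M=\mathfrak{g}$, $\rho=\ad$, $\alpha_{M}=\alpha$, $\beta_{M}=\beta$, under which $\widetilde{\rho}$ becomes $\ad^{*}$ and conditions (1)--(4) of the theorem become conditions (1)--(4) of the corollary (the paper gives no separate proof). Note only that the corollary's hypothesis already takes $\ad$ to be a representation, so your preliminary verification of the representation axioms for $\ad$ is not required for the deduction (and in the Bihom setting it would in any case need regularity of $\alpha,\beta$ rather than following immediately from the Bihom-super-Jacobi identity as stated).
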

\begin{defn}
Let $\mathfrak{g}$ be a $3$-Bihom-Lie superalgebra over a field $\mathbb{K}$. We inductively define a derived series
$$(\mathfrak{g}^{(n)})_{n\geq 0}: \mathfrak{g}^{(0)}=\mathfrak{g},\ \mathfrak{g}^{(n+1)}=[\mathfrak{g}^{(n)},\mathfrak{g}^{(n)},\mathfrak{g}]$$
and a central descending series
$$(\mathfrak{g}^{n})_{n\geq 0}: \mathfrak{g}^{0}=\mathfrak{g},\ \mathfrak{g}^{n+1}=[\mathfrak{g}^{n},\mathfrak{g},\mathfrak{g}].$$

$\mathfrak{g}$ is called solvable and nilpotent $($of length $k$$)$ if and only if there is a $($smallest$)$ integer $k$ such that $\mathfrak{g}^{(k)}=0$ and $\mathfrak{g}^{k}=0$, respectively.
\end{defn}

\begin{thm}
Let $(\mathfrak{g},[\cdot,\cdot,\cdot],\alpha,\beta)$ be a $3$-Bihom-Lie superalgebra over a field $\mathbb{K}$.
\begin{enumerate}
   \item  If $\mathfrak{g}$ is solvable, then $(\mathfrak{g}\oplus \mathfrak{g}^{*},[\cdot,\cdot,\cdot]_\theta,\alpha+\tilde{\alpha},\beta+\tilde{\beta})$ is solvable.
   \item  If $\mathfrak{g}$ is nilpotent, then $(\mathfrak{g}\oplus \mathfrak{g}^{*},[\cdot,\cdot,\cdot]_\theta,\alpha+\tilde{\alpha},\beta+\tilde{\beta})$ is nilpotent.
\end{enumerate}
\end{thm}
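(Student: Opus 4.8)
The plan is to exploit the canonical projection $\pi\colon L:=T^{\ast}_{\theta}(\mathfrak g)=\mathfrak g\oplus\mathfrak g^{\ast}\longrightarrow\mathfrak g$, $\pi(v+g)=v$. Reading off the $T_{\theta}$-bracket, the $\mathfrak g$-component of $[v_{1}+g_{1},v_{2}+g_{2},v_{3}+g_{3}]_{\theta}$ is exactly $[v_{1},v_{2},v_{3}]$, and $\pi\circ(\alpha+\widetilde\alpha)=\alpha\circ\pi$, $\pi\circ(\beta+\widetilde\beta)=\beta\circ\pi$; hence $\pi$ is a surjective morphism of $3$-Bihom-Lie superalgebras with kernel $\mathfrak g^{\ast}$, and a straightforward induction with $\mathfrak g^{(n+1)}=[\mathfrak g^{(n)},\mathfrak g^{(n)},\mathfrak g]$, $\mathfrak g^{n+1}=[\mathfrak g^{n},\mathfrak g,\mathfrak g]$ gives $\pi(L^{(n)})=\mathfrak g^{(n)}$ and $\pi(L^{n})=\mathfrak g^{n}$ for every $n$. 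Two further facts are read off the defining formula of $[\cdot,\cdot,\cdot]_{\theta}$, where $\mathfrak g^{\ast}$ carries the coadjoint action $\rho=\ad^{\ast}$, $\alpha_{M}=\widetilde\alpha$, $\beta_{M}=\widetilde\beta$, and we stand in the regular situation underlying the construction of $T^{\ast}_{\theta}(\mathfrak g)$: if at least two of the three arguments lie in $\mathfrak g^{\ast}$ then the bracket is $0$, since every summand of the formula is one of $[\,\cdot,\cdot,\cdot]$, $\theta(\cdot,\cdot,\cdot)$, $\rho(\cdot,\cdot)(\cdot)$ and vanishes as soon as two of its $\mathfrak g$-entries are $0$; and for $g\in\mathfrak g^{\ast}$, $a,b\in L$,
$$[g,a,b]_{\theta}=\rho\big(\pi(a),\alpha^{-1}\beta(\pi(b))\big)\big(\widetilde\alpha\,\widetilde\beta^{-1}(g)\big)\in\mathfrak g^{\ast}.$$

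\textbf{Solvable case.} If $\mathfrak g^{(k)}=0$, then $\pi(L^{(k)})=\mathfrak g^{(k)}=0$ forces $L^{(k)}\subseteq\ker\pi=\mathfrak g^{\ast}$; hence
$$L^{(k+1)}=[L^{(k)},L^{(k)},L]_{\theta}\subseteq[\mathfrak g^{\ast},\mathfrak g^{\ast},L]_{\theta}=0,$$
so $L$ is solvable, of length at most $k+1$.

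\textbf{Nilpotent case.} If $\mathfrak g^{k}=0$, then $\pi(L^{k})=\mathfrak g^{k}=0$ gives $L^{k}\subseteq\mathfrak g^{\ast}$. Combining the displayed identity for $[g,a,b]_{\theta}$ with the equivariance $\widetilde\alpha\,\widetilde\beta^{-1}\circ\rho(u,v)=\rho\big(\alpha\beta^{-1}(u),\alpha\beta^{-1}(v)\big)\circ\widetilde\alpha\,\widetilde\beta^{-1}$, an induction on $j\ge0$ shows that $L^{k+j}\subseteq\mathfrak g^{\ast}$ and that every element of $L^{k+j}$ is a finite sum of terms $\rho(u_{1},v_{1})\cdots\rho(u_{j},v_{j})(g)$ with $u_{i},v_{i}\in\mathfrak g$, $g\in\mathfrak g^{\ast}$. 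Since $\rho=\ad^{\ast}$, for $w\in\mathfrak g$ one has
$$\big(\rho(u_{1},v_{1})\cdots\rho(u_{j},v_{j})(g)\big)(w)=\pm\,g\big([u_{j},v_{j},[\cdots[u_{1},v_{1},w]\cdots]]\big),$$
and pushing the innermost bracket into the first slot by $3$-Bihom-super-skewsymmetry and iterating $\mathfrak g^{m+1}=[\mathfrak g^{m},\mathfrak g,\mathfrak g]$ shows $[u_{j},v_{j},[\cdots[u_{1},v_{1},w]\cdots]]\in\mathfrak g^{j}$. For $j=k$ this lies in $\mathfrak g^{k}=0$, so the $k$-fold $\rho$-products vanish identically; therefore $L^{2k}=0$ and $L$ is nilpotent.

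\textbf{Main obstacle.} The substantive step is the nilpotent case: rewriting $L^{k+j}$ as $j$-fold products of the action maps and then annihilating those products. This genuinely relies on the representation being the coadjoint one, so that products of $\rho$'s dualize to products of $\ad$'s, which climb the lower central series of $\mathfrak g$; for a general $T_{\theta}$-extension the analogous conclusion need not hold. Managing the twists $\alpha^{\pm1},\beta^{\pm1},\widetilde\alpha^{\pm1},\widetilde\beta^{\pm1}$ requires bookkeeping but no new ideas: being automorphisms they permit the skew-symmetry substitutions (solving $\beta(x)=u$, $\alpha(z)=w$) and leave each $\mathfrak g^{m}$ invariant.
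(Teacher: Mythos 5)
Your proposal is correct and takes essentially the same route as the paper: the paper likewise shows $(\mathfrak{g}\oplus\mathfrak{g}^{\ast})^{(k)}\subseteq\mathfrak{g}^{(k)}+\mathfrak{g}^{\ast}$ by induction (your projection $\pi$ is just a repackaging of that step, using the same vanishing of brackets with two $\mathfrak{g}^{\ast}$-entries), and for nilpotency it also rewrites iterated brackets of an element $h\in\mathfrak{g}^{\ast}$ as $h$ composed with twisted products of $\ad$'s, which die because the nested adjoint brackets climb the lower central series of $\mathfrak{g}$. Your explicit bound $L^{2k}=0$ is simply the carefully indexed version of the paper's final evaluation argument.
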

\begin{proof}
\begin{enumerate}
\item
We suppose that $\mathfrak{g}$ is solvable of length $s$, i.e. $\mathfrak{g}^{(s)}=[\mathfrak{g}^{(s-1)},\mathfrak{g}^{(s-1)},\mathfrak{g}]=0.$ We claim that $(\mathfrak{g}\oplus \mathfrak{g}^*)^{(k)}\subseteq \mathfrak{g}^{(k)}+\mathfrak{g}^*$, which we prove by induction on $k$. The case $k=1$, by Proposition \ref{propk}, we have
\begin{eqnarray*}
(\mathfrak{g}\oplus \mathfrak{g}^*)^{(1)}&=&[\mathfrak{g}\oplus \mathfrak{g}^*,\mathfrak{g}\oplus \mathfrak{g}^*,\mathfrak{g}\oplus \mathfrak{g}^*]_\theta\\
&=&[\mathfrak{g},\mathfrak{g},\mathfrak{g}]_\theta+[\mathfrak{g},\mathfrak{g},\mathfrak{g}^*]_\theta+[\mathfrak{g},\mathfrak{g}^*,\mathfrak{g}]_\theta+[\mathfrak{g}^*,\mathfrak{g},\mathfrak{g}]_\theta\\
&=&[\mathfrak{g},\mathfrak{g},\mathfrak{g}]+\theta(\mathfrak{g},\mathfrak{g},\mathfrak{g})+[\mathfrak{g},\mathfrak{g},\mathfrak{g}^*]_\theta+[\mathfrak{g},\mathfrak{g}^*,\mathfrak{g}]_\theta+[\mathfrak{g}^*,\mathfrak{g},\mathfrak{g}]_\theta\\
&\subseteq&\mathfrak{g}^{(1)}+\mathfrak{g}^*.
\end{eqnarray*}
By induction, $(\mathfrak{g}\oplus \mathfrak{g}^*)^{(k-1)}\subseteq \mathfrak{g}^{(k-1)}+\mathfrak{g}^*$. So
\begin{eqnarray*}
&&(\mathfrak{g}\oplus \mathfrak{g}^*)^{(k)}\\
&=&[(\mathfrak{g}\oplus \mathfrak{g}^*)^{(k-1)},(\mathfrak{g}\oplus \mathfrak{g}^*)^{(k-1)},\mathfrak{g}\oplus \mathfrak{g}^*]_\theta\\
&\subseteq&[\mathfrak{g}^{(k-1)}+\mathfrak{g}^*,\mathfrak{g}^{(k-1)}+\mathfrak{g}^*,\mathfrak{g}\oplus \mathfrak{g}^*]_\theta\\
&=&[\mathfrak{g}^{(k-1)},\mathfrak{g}^{(k-1)},\mathfrak{g}]+\theta(\mathfrak{g}^{(k-1)},\mathfrak{g}^{(k-1)},\mathfrak{g})+[\mathfrak{g}^{(k-1)},\mathfrak{g}^{(k-1)},\mathfrak{g}^*]_\theta+[\mathfrak{g}^{(k-1)},\mathfrak{g}^*,\mathfrak{g}]_\theta\\
&&+[\mathfrak{g}^*,\mathfrak{g}^{(k-1)},\mathfrak{g}]_\theta\\
&\subseteq&\mathfrak{g}^{(k)}+\mathfrak{g}^*.
\end{eqnarray*}
Therefore
\begin{eqnarray*}
&&(\mathfrak{g}\oplus \mathfrak{g}^*)^{(s+1)}\\
&\subseteq&[\mathfrak{g}^{(s)},\mathfrak{g}^{(s)},\mathfrak{g}]+\theta(\mathfrak{g}^{(s)},\mathfrak{g}^{(s)},\mathfrak{g})+[\mathfrak{g}^{(s)},\mathfrak{g}^{(s)},\mathfrak{g}^*]_\theta+[\mathfrak{g}^{(s)},\mathfrak{g}^*,\mathfrak{g}]_\theta+[\mathfrak{g}^*,\mathfrak{g}^{(s)},\mathfrak{g}]_\theta\\
&=&0.
\end{eqnarray*}
It follows $(\mathfrak{g}\oplus \mathfrak{g}^{*},[\cdot,\cdot,\cdot]_\theta,\alpha+\tilde{\alpha},\beta+\tilde{\beta})$ is solvable.
\item
Suppose that $\mathfrak{g}$ is nilpotent of length $s$. Since $(\mathfrak{g}\oplus \mathfrak{g}^{*})^{s}/\mathfrak{g}^{*}\cong \mathfrak{g}^{s}$ and $\mathfrak{g}^{s}=0$, we have
$(\mathfrak{g}\oplus \mathfrak{g}^{*})^{s}\subseteq \mathfrak{g}^{*}$. Let $h\in(\mathfrak{g}\oplus \mathfrak{g}^{*})^{s}\subseteq \mathfrak{g}^{*},~ b\in \mathfrak{g},~x_{i}+f_{i},~ y_{i}+g_{i}\in \mathfrak{g}\oplus \mathfrak{g}^{*}, ~1\leq i\leq s-1$, we have
$$\begin{array}{lllllllll}
&&[[\cdots[h,x_1+f_{1},y_{1}+g_{1}]_\theta,\cdots]_\theta,x_{s-1}+f_{s-1},y_{s-1}+g_{s-1}]_\theta(b)\\
&=&(-1)^{(|x_{1}|+|y_{1}|)|h|+(|x_{2}|+|y_{2}|)(|x_{1}|+|y_{1}|+|h|)+\cdots+(|x_{s}|+|y_{s}|)(|x_{1}|+|y_{1}|+\cdots+\cdots|x_{s-1}|+|y_{s-1}|+|h|)}\\
&& \times h\alpha\beta^{-1}\ad(x_1,\!\alpha^{-1}\beta(y_1)\!)\alpha\beta^{-1}\ad(x_2,\!\alpha^{-1}\beta(y_2)\!)\!\cdots\!\alpha\beta^{-1}\ad(x_{s-1},\!\alpha^{-1}\beta(y_{s-1})\!)\!(b)\\
&=&(-1)^{(|x_{1}|+|y_{1}|)|h|+(|x_{2}|+|y_{2}|)(|x_{1}|+|y_{1}|+|h|)+\cdots+(|x_{s}|+|y_{s}|)(|x_{1}|+|y_{1}|+\cdots+\cdots|x_{s-1}|+|y_{s-1}|+|h|)}\\
&&\times h\alpha\beta^{-1}([x_1,\alpha^{-1}\beta(y_1),\alpha^{-1}\beta[x_2,\alpha^{-1}\beta(y_2),\!\cdots\!,\alpha\beta^{-1}[x_{s-1},\alpha^{-1}\beta(y_{s-1}),b]\cdots]])\\
& &\in h(\mathfrak{g}^{s})=0.
\end{array}$$
Thus $(\mathfrak{g}\oplus \mathfrak{g}^{*},[\cdot,\cdot,\cdot]_\theta,\alpha+\tilde{\alpha},\beta+\tilde{\beta})$ is nilpotent.
\end{enumerate}
\end{proof}

Now we consider the following symmetric bilinear form $q_{\mathfrak{g}}$ on $\mathfrak{g}\oplus \mathfrak{g}^{*}$,
$$q_{\mathfrak{g}}(x+f,y+g)=f(y)+(-1)^{|x||y|}g(x),~ \forall\, x+f, y+g\in \mathfrak{g}\oplus \mathfrak{g}^*.$$
Obviously, $q_{\mathfrak{g}}$ is  nondegenerate. In fact, if $x+f$ is orthogonal to  all elements $y+g$ of $\mathfrak{g}\oplus \mathfrak{g}^{*}$, then $f(y)=0$ and $(-1)^{|x||y|}g(x)=0$,  which implies that $x=0$ and $f=0$.

\begin{lem}\label{lemma3.2}
Let $q_\mathfrak{g}$ be as above. Then the 4-tuple $(\mathfrak{g}\oplus \mathfrak{g}^{*},q_\mathfrak{g},\alpha+\tilde{\alpha},\beta+\tilde{\beta})$ is a quadratic $3$-Bihom-Lie superalgebra if and only if $\theta$ satisfies for all $ x_1,x_2,x_3,x_4\in \mathfrak{g}$,
\begin{eqnarray*}\label{203}
\theta(\beta(x_1),\beta(x_2),\alpha(x_3))(\alpha(x_4))+(-1)^{|x_{3}||x_{4}|}\theta(\beta(x_1),\beta(x_2),\alpha(x_4))(\alpha(x_3))
=0.
\end{eqnarray*}
\end{lem}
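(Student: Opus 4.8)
The plan is to check the four defining properties of a quadratic $3$-Bihom-Lie superalgebra for $(\mathfrak{g}\oplus\mathfrak{g}^{*},q_{\mathfrak{g}},\alpha+\widetilde{\alpha},\beta+\widetilde{\beta})$: that $q_{\mathfrak{g}}$ is nondegenerate, supersymmetric and $\alpha\beta$-invariant, and that $\alpha+\widetilde{\alpha}$ and $\beta+\widetilde{\beta}$ are $q_{\mathfrak{g}}$-symmetric. Nondegeneracy was already observed just before the statement. Supersymmetry is immediate: for homogeneous $x+f$ one has $|x+f|=|x|=|f|$, and $q_{\mathfrak{g}}(x+f,y+g)=f(y)+(-1)^{|x||y|}g(x)=(-1)^{|x||y|}\big(g(x)+(-1)^{|x||y|}f(y)\big)=(-1)^{|x+f||y+g|}q_{\mathfrak{g}}(y+g,x+f)$. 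The $q_{\mathfrak{g}}$-symmetry of the two structure maps is equally quick, since $\widetilde{\alpha}(h)=h\circ\alpha$ and $\widetilde{\beta}(h)=h\circ\beta$ force both $q_{\mathfrak{g}}((\alpha+\widetilde{\alpha})(x+f),y+g)$ and $q_{\mathfrak{g}}(x+f,(\alpha+\widetilde{\alpha})(y+g))$ to equal $f(\alpha(y))+(-1)^{|x||y|}g(\alpha(x))$, and likewise for $\beta+\widetilde{\beta}$. So the only property with content is $\alpha\beta$-invariance, and the lemma reduces to showing that invariance is equivalent to the displayed identity on $\theta$.

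For that, I would write arbitrary homogeneous elements $X_i=x_i+f_i$ ($x_i\in\mathfrak{g}$, $f_i\in\mathfrak{g}^{*}$) and expand both sides of the invariance identity $q_{\mathfrak{g}}([(\beta+\widetilde{\beta})(X_1),(\beta+\widetilde{\beta})(X_2),(\alpha+\widetilde{\alpha})(X_3)]_{\theta},(\alpha+\widetilde{\alpha})(X_4))=q_{\mathfrak{g}}((\beta+\widetilde{\beta})(X_1),[(\beta+\widetilde{\beta})(X_2),(\alpha+\widetilde{\alpha})(X_3),(\alpha+\widetilde{\alpha})(X_4)]_{\theta})$ using the explicit $T_{\theta}^{*}$-bracket (the $T_{\theta}$-bracket with $\rho=\ad^{*}$, $\alpha_M=\widetilde{\alpha}$, $\beta_M=\widetilde{\beta}$), the formulas $\ad^{*}(x,y)(h)=-(-1)^{|h|(|x|+|y|)}h\circ\ad(x,y)$, $\widetilde{\alpha}(h)=h\circ\alpha$, $\widetilde{\beta}(h)=h\circ\beta$, the relation $\alpha^{-1}\beta\alpha=\beta$, and the definition of $q_{\mathfrak{g}}$. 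Because $q_{\mathfrak{g}}$ pairs a $\mathfrak{g}$-component only against a $\mathfrak{g}^{*}$-component, each side decomposes into a sum of terms, each of which is either some $f_i$ or a component of $\theta$ evaluated on a bracket of the $x_j$, weighted by a Koszul sign.

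I would then sort these terms by whether they involve an $f_i$. The terms that do are $\theta$-free; they are precisely the terms that occur already when $\theta=0$, that is for the canonical bilinear form on the semidirect product $\mathfrak{g}\ltimes_{\ad^{*}}\mathfrak{g}^{*}$, and I expect them to cancel in pairs between the two sides --- a cancellation driven by the $3$-Bihom-super-skewsymmetry of $[\cdot,\cdot,\cdot]$, the commutation and multiplicativity of $\alpha,\beta$, and the identities characterizing $\ad^{*}$ as a representation (the coadjoint Corollary), with no input from $\theta$. What then remains is a pair of $\theta$-terms, one on each side; using the skew-symmetry of $\theta$ in its three entries (item (3) of the $3$-cocycle definition), its $\alpha$- and $\beta$-equivariance (items (1),(2)), and --- where needed --- surjectivity of $\alpha,\beta$ to pass to preimages, I would reorganize these into $\theta(\beta(x_1),\beta(x_2),\alpha(x_3))(\alpha(x_4))+(-1)^{|x_3||x_4|}\theta(\beta(x_1),\beta(x_2),\alpha(x_4))(\alpha(x_3))$. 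The invariance identity therefore holds for all $X_i$ if and only if this expression vanishes for all $x_1,x_2,x_3,x_4\in\mathfrak{g}$, which is exactly the asserted condition; both implications come out at once, and for the \emph{only if} direction it already suffices to specialize to $f_i=0$.

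The hard part will be the middle step: keeping the Koszul signs straight through the long expansion, and, above all, verifying that the $\theta$-free terms really cancel. That cancellation is where one genuinely needs the four coadjoint identities of the Corollary together with the $3$-Bihom-super-skewsymmetry and multiplicativity of the bracket, and it constitutes the bulk of the work; by contrast, the final collapse of the surviving $\theta$-part into the canonical shape $\theta(\beta(\cdot),\beta(\cdot),\alpha(\cdot))(\alpha(\cdot))$ is shorter but still delicate, relying on both the skew-symmetry and the equivariance of $\theta$.
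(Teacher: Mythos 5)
Your reduction of the problem to the invariance property is fine: nondegeneracy, supersymmetry and the $q_\mathfrak{g}$-symmetry of $\alpha+\tilde{\alpha}$, $\beta+\tilde{\beta}$ are exactly as quick as you say, and this part matches the paper. The genuine gap is in the identity you choose to expand. Writing $X_i=x_i+f_i$, you expand
$q_\mathfrak{g}\big([(\beta+\tilde{\beta})X_1,(\beta+\tilde{\beta})X_2,(\alpha+\tilde{\alpha})X_3]_\theta,(\alpha+\tilde{\alpha})X_4\big)=q_\mathfrak{g}\big((\beta+\tilde{\beta})X_1,[(\beta+\tilde{\beta})X_2,(\alpha+\tilde{\alpha})X_3,(\alpha+\tilde{\alpha})X_4]_\theta\big)$
and expect the $\theta$-free terms to cancel between the two sides. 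They do not. Already in the untwisted, purely even case $\alpha=\beta=\mathrm{id}$, $\theta=0$, $\rho=\ad^*$, one computes the left-hand side to be $-f_3([x_1,x_2,x_4])+f_2([x_1,x_3,x_4])-f_1([x_2,x_3,x_4])+f_4([x_1,x_2,x_3])$ and the right-hand side to be exactly the negative of this; so the $f_i$-terms add instead of cancelling, the equality for all $f_i$ forces conditions on the bracket of $\mathfrak{g}$ rather than on $\theta$, and with general $\theta$ the resulting ``if and only if'' is not the displayed condition. In short, for this form of the invariance identity your key cancellation step fails, and the argument as planned breaks down.

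What the paper actually verifies is the identity in its other (ad-invariance) form: it computes the combination $q_\mathfrak{g}\big([(\beta+\tilde{\beta})X_1,(\beta+\tilde{\beta})X_2,(\alpha+\tilde{\alpha})X_3]_\theta,(\alpha+\tilde{\alpha})X_4\big)+(-1)^{|x_3|(|x_1|+|x_2|)}q_\mathfrak{g}\big((\alpha+\tilde{\alpha})X_3,[(\beta+\tilde{\beta})X_1,(\beta+\tilde{\beta})X_2,(\alpha+\tilde{\alpha})X_4]_\theta\big)$ and shows it equals $\theta(\beta(x_1),\beta(x_2),\alpha(x_3))(\alpha(x_4))+(-1)^{|x_3||x_4|}\theta(\beta(x_1),\beta(x_2),\alpha(x_4))(\alpha(x_3))$. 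In that combination the $f_i$-terms do cancel in pairs, and the cancellation uses only the definition of $\ad^*$, the supersymmetry of $q_\mathfrak{g}$ and the $3$-Bihom-super-skewsymmetry of the bracket --- none of the four coadjoint-representation identities you anticipate needing --- and both surviving $\theta$-terms appear directly in the canonical shape $\theta(\beta(\cdot),\beta(\cdot),\alpha(\cdot))(\alpha(\cdot))$, so no surjectivity/preimage reshuffling of the arguments of $\theta$ is required. To repair your proof you should either work with this second form of the invariance condition from the outset, or first establish that the form you chose is equivalent to it for the bracket $[\cdot,\cdot,\cdot]_\theta$; the latter is not automatic (the computation above shows the two forms differ essentially by a sign on the semidirect product), so the honest fix is to redo the expansion for the combination the paper uses.
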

\begin{proof}

Now suppose that  $x_i+f_i\in \mathfrak{g}\oplus \mathfrak{g}^*, i=1,2,3,4$, we have
\begin{eqnarray*}
q_{\mathfrak{g}}((\alpha+\tilde{\alpha})(x_1+f_1),x_2+f_2)&=&q_\mathfrak{g}(\alpha(x_1)+f_1\circ\alpha,x_2+f_2)\\
&=&f_1(\alpha(x_2))+(-1)^{|x_{1}||x_{2}|}f_2\circ\alpha(x_1)\\
&=&q_\mathfrak{g}(x_1+f_1,(\alpha+\tilde{\alpha})(x_2+f_2)).
\end{eqnarray*}
Then $\alpha+\tilde{\alpha}$ is $q_\mathfrak{g}$-symmetric. In the same way, $\beta+\tilde{\beta}$ is $q_\mathfrak{g}$-symmetric.

Next, we can obtain\\\\
$\begin{array}{lllll}
&&q_{\mathfrak{g}}\big([(\beta+\tilde{\beta})(x_1+f_1),(\beta+\tilde{\beta})(x_2+f_2), (\alpha+\tilde{\alpha})(x_3+f_3)]_\theta, (\alpha+\tilde{\alpha})(x_4+f_4)\big)\\
&&+(-1)^{|x_{3}|(|x_{1}|+|x_{2}|)}q_{\mathfrak{g}}\big((\alpha+\tilde{\alpha})(x_3+f_3),[(\beta+\tilde{\beta})(x_1+f_1), (\beta+\tilde{\beta})(x_2+f_2), (\alpha+\tilde{\alpha})(x_4+f_4)]_\theta\big)\\
&=&q_{\mathfrak{g}}\big([\beta(x_1)+f_1\circ\beta, \beta(x_2)+f_2\circ\beta, \alpha(x_3)+f_3\circ\alpha]_\theta, \alpha(x_4)+f_4\circ\alpha\big)\\
&&+(-1)^{|x_{3}|(|x_{1}|+|x_{2}|)}q_{\mathfrak{g}}\big(\alpha(x_3)+f_3\circ\alpha, [\beta(x_1)+f_1\circ\beta, \beta(x_2)+f_2\circ\beta, \alpha(x_4)+f_4\circ\alpha]_\theta\big)\\
&=&q_{\mathfrak{g}}\big([\beta(x_1),\beta(x_2),\alpha(x_3)]+\theta(\beta(x_1),\beta(x_2),\alpha(x_3))+\ad^*(\beta(x_1),\beta(x_2))(f_3\circ\alpha)
\end{array}$
$\begin{array}{lllll}
&&-(-1)^{|x_{2}||x_{3}|}\ad^*(\beta(x_1),\alpha^{-1}\beta\alpha(x_3))\tilde{\alpha}\tilde{\beta}^{-1}(f_2\circ\beta)\\
&&+(-1)^{|x_{1}|(|x_{2}|+|x_{3}|)}\ad^*(\beta(x_2),\alpha^{-1}\beta\alpha(x_3))\tilde{\alpha}\tilde{\beta}^{-1}(f_1\circ\beta),\\
&&\alpha(x_4)+f_4\circ\alpha\big)+(-1)^{|x_{3}|(|x_{1}|+|x_{2}|)}q_{\mathfrak{g}}\big(\alpha(x_3)+f_3\circ\alpha,[\beta(x_1),\beta(x_2),\alpha(x_4)]\\&&+\theta(\beta(x_1),\beta(x_2),\alpha(x_4))
+\ad^*(\beta(x_1),\beta(x_2))(f_4\circ\alpha)-(-1)^{|x_{2}||x_{4}|}\ad^*(\beta(x_1),\\&&\alpha^{-1}\beta\alpha(x_4))\tilde{\alpha}\tilde{\beta}^{-1}(f_2\circ\beta)
+(-1)^{|x_{1}|(|x_{2}|+|x_{4}|)}\ad^*(\beta(x_2),\alpha^{-1}\beta\alpha(x_4))\tilde{\alpha}\tilde{\beta}^{-1}(f_1\circ\beta)\big)\\
&=&\theta(\beta(x_1),\beta(x_2),\alpha(x_3)\!)(\alpha(x_4)\!)\!-\!(-1)^{|f_{3}|(|x_{1}|+|x_{2}|)}f_3\alpha([\beta(x_1),\beta(x_2),\alpha(x_4)])\\
&&+(-1)^{|x_{2}||x_{3}|+|f_{2}|(|x_{1}|+|x_{3}|)}f_2\alpha([\beta(x_1),\beta(x_3),\alpha(x_4)])\\
&&-(-1)^{(|x_{1}|+|f_{1}|)(|x_{2}|+|x_{3}|)}f_1\alpha([\beta(x_2),\beta(x_3),\alpha(x_4)])\!+\!f_4\alpha([\beta(x_1),\beta(x_2),\alpha(x_3)])\\
&&+(-1)^{|x_{3}|(|x_{1}|+|x_{2}|+|x_{4}|)}(-1)^{|x_{3}|(|x_{1}|+|x_{2}|)}\theta(\beta(x_1),\beta(x_2),\alpha(x_4)\!)\!(\alpha(x_3)\!)\!\\
&&-(-1)^{|f_{4}|(|x_{1}|+|x_{2}|)}f_4\alpha([\beta(x_1),\beta(x_2),\alpha(x_3)])\\
&&+(-1)^{|x_{3}|(|x_{2}|+|x_{4}|)+|f_{2}|(|x_{1}|+|x_{3}|)}f_2\alpha([\beta(x_1),\beta(x_4),\alpha(x_3)])\\
&&-(-1)^{(|x_{1}|+|f_{1}|)(|x_{2}|+|x_{3}|)+|x_{3}||x_{4}|}f_1\alpha([\beta(x_2),\beta(x_4),\alpha(x_3)])\\
&&+(-1)^{|f_{3}|(|x_{1}|+|x_{2}|)}f_3\alpha([\beta(x_1),\beta(x_2),\alpha(x_4)])\\
&=&\theta(\beta(x_1),\beta(x_2),\alpha(x_3))(\alpha(x_4))+(-1)^{|x_{3}|x_{4}||}\theta(\beta(x_1),\beta(x_2),\alpha(x_4))(\alpha(x_3)),
\end{array}$\\

which implies
\begin{eqnarray*}
&&q_{\mathfrak{g}}\big([(\beta+\tilde{\beta})(x_1+f_1),(\beta+\tilde{\beta})(x_2+f_2), (\alpha+\tilde{\alpha})(x_3+f_3)]_\theta, (\alpha+\tilde{\alpha})(x_4+f_4)\big)\\
&+&\!\!\!\!(-1)^{|x_{3}|(|x_{1}|+|x_{2}|)}q_{\mathfrak{g}}\big((\alpha+\tilde{\alpha})(x_3+f_3),[(\beta+\tilde{\beta})(x_1+f_1), (\beta+\tilde{\beta})(x_2+f_2), (\alpha+\tilde{\alpha})(x_4+f_4)]_\theta\big)=0
\end{eqnarray*}
if and only if
$\theta(\beta(x_1),\beta(x_2),\alpha(x_3))(\alpha(x_4))+(-1)^{|x_{3}|x_{4}||}\theta(\beta(x_1),\beta(x_2),\alpha(x_4))(\alpha(x_3))
=0$.

Hence the lemma follows.
\end{proof}

Now, we shall call the quadratic $3$-Bihom-Lie superalgebra $(\mathfrak{g}\oplus \mathfrak{g}^{*},q_\mathfrak{g},\alpha+\tilde{\alpha},\beta+\tilde{\beta})$ the $T^*_\theta$-extension of $\mathfrak{g}$ (by $\theta$) and denote by $T_\theta^*(\mathfrak{g})$.
\begin{lem}\label{lemma3.1}
Let $(\mathfrak{g},q_\mathfrak{g},\alpha,\beta)$ be a $2n$-dimensional quadratic $3$-Bihom-Lie superalgebra over a field $\mathbb{K}$ $(ch\mathbb{K}\neq2)$, $\alpha$ be surjective and $I$ be an isotropic $n$-dimensional subspace of $\mathfrak{g}$. If $I$ is a Bihom-ideal of $(\mathfrak{g},[\cdot,\cdot,\cdot],\alpha,\beta)$, then $[\beta(I),\beta(\mathfrak{g}),\alpha(\mathfrak{g})]=0$.
\end{lem}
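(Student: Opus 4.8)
The plan is to combine three ingredients: that $I$ is a \emph{Lagrangian} (maximally isotropic) subspace, that the Bihom-ideal property keeps $[\beta(I),\beta(\mathfrak g),\alpha(\mathfrak g)]$ inside $I$, and that the $\alpha\beta$-invariance of $q_{\mathfrak g}$ lets one compute the pairing of such a bracket with an arbitrary vector as a pairing of two elements of $I$. First I would note that $I=I^{\perp}$: since $q_{\mathfrak g}$ is nondegenerate and $\dim_{\mathbb K}\mathfrak g=2n$ we have $\dim_{\mathbb K}I^{\perp}=2n-\dim_{\mathbb K}I=n=\dim_{\mathbb K}I$, and $I\subseteq I^{\perp}$ by isotropy, so $I=I^{\perp}$ and $q_{\mathfrak g}$ vanishes on $I\times I$. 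I would also record that, $\mathfrak g$ being finite-dimensional, the surjection $\alpha$ is bijective, hence an automorphism of $(\mathfrak g,[\cdot,\cdot,\cdot],\alpha,\beta)$, so $\alpha(I)=I$ and $\alpha(\mathfrak g)=\mathfrak g$; together with $\beta(I)\subseteq I$, $[I,\mathfrak g,\mathfrak g]\subseteq I$ and the $3$-Bihom-super-skewsymmetry (which brings an $I$-argument into the first slot up to a sign), this already gives $[\beta(I),\beta(\mathfrak g),\alpha(\mathfrak g)]\subseteq I=I^{\perp}$.

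It remains to show this subspace is zero. Fix $y\in I$ and $u,v,w\in\mathfrak g$ and set $w_0=[\beta(y),\beta(u),\alpha(v)]$. Since $\alpha$ is onto, every vector of $\mathfrak g$ is of the form $\alpha(w)$, so by nondegeneracy of $q_{\mathfrak g}$ it suffices to show $q_{\mathfrak g}(w_0,\alpha(w))=0$. I would use the $3$-Bihom-super-skewsymmetry to position $y$ suitably and then the $\alpha\beta$-invariance
$$q_{\mathfrak g}\big([\beta(x_1),\beta(x_2),\alpha(x_3)],\alpha(x_4)\big)=q_{\mathfrak g}\big(\beta(x_1),[\beta(x_2),\alpha(x_3),\alpha(x_4)]\big)$$
to transfer the outer bracket onto the other slot of $q_{\mathfrak g}$, feeding in $\alpha$-preimages (available since $\alpha$ is surjective); applying skewsymmetry and invariance once more, the relations $\alpha(I)=I$, $\beta(I)\subseteq I$ and $[\beta(I),\mathfrak g,\mathfrak g]\subseteq I$ push the resulting inner bracket back into $I$, so that $q_{\mathfrak g}(w_0,\alpha(w))$ is exhibited as a value of $q_{\mathfrak g}$ on two elements of $I=I^{\perp}$, hence is $0$ by the first step. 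As $w$ is arbitrary and $\alpha$ onto, nondegeneracy forces $w_0=0$, i.e. $[\beta(I),\beta(\mathfrak g),\alpha(\mathfrak g)]=0$.

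The hard part is this last reduction: one must juggle the two twisting maps $\alpha$ and $\beta$ and correctly carry the Koszul signs coming from the $\mathbb Z_2$-grading through each application of $3$-Bihom-super-skewsymmetry, supersymmetry of $q_{\mathfrak g}$, and $\alpha\beta$-invariance, and one must check that every passage to an $\alpha$- or $\beta$-preimage is legitimate --- this is precisely where bijectivity of $\alpha$ and the stability of $I$ under $\alpha$ and $\beta$ enter (the hypothesis $\mathrm{ch}\,\mathbb K\neq2$ being available in case the vanishing must instead be extracted from a relation of the form $2\,q_{\mathfrak g}(\cdots)=0$). Conceptually, though, nothing beyond ``Lagrangian Bihom-ideal $+$ invariant form $+$ nondegeneracy'' is needed.
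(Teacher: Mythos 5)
Your reduction breaks down exactly at the step you flag as ``the hard part,'' and it cannot be repaired: with only \emph{one} bracket argument coming from $I$, no combination of $3$-Bihom-super-skewsymmetry and $\alpha\beta$-invariance will ever place \emph{both} entries of the final pairing inside $I$. Concretely, starting from $q_{\mathfrak g}\big([\beta(y),\beta(u),\alpha(v)],\alpha(w)\big)$ with $y\in I$ and $u,v,w\in\mathfrak g$ arbitrary, the invariance identity gives $q_{\mathfrak g}\big(\beta(y),[\beta(u),\alpha(v),\alpha(w)]\big)$, where the inner bracket has no $I$-argument at all; if you instead first use skewsymmetry to write the bracket as $\pm[\beta(u),\beta(v),\alpha(y)]$ and then apply invariance, you get $q_{\mathfrak g}\big(\beta(u),[\beta(v),\alpha(y),\alpha(w)]\big)$, where the inner bracket can (at best, and only after worrying that the ideal condition is stated for the first slot and that only $\alpha$ is assumed surjective) be pushed into $I$, but the other entry $\beta(u)$ is a general element, not in $I^{\perp}=I$. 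In every variant one slot of $q_{\mathfrak g}$ stays outside $I$, so $q_{\mathfrak g}(I,I)=0$ cannot be invoked, and nondegeneracy gives you nothing.

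In fact the claim with $\alpha(\mathfrak g)$ in the third slot is too strong; the lemma as printed appears to be a misstatement, since the paper's own argument proves, and its later application quotes, only $[\beta(I),\beta(\mathfrak g),\alpha(I)]=0$. The paper's proof is a one-line invariance computation, $q_{\mathfrak g}\big([\beta(I),\beta(\mathfrak g),\alpha(I^{\perp})],\alpha(\mathfrak g)\big)=\pm\, q_{\mathfrak g}\big(\alpha(I^{\perp}),[\beta(I),\beta(\mathfrak g),\alpha(\mathfrak g)]\big)\subseteq q_{\mathfrak g}\big(I^{\perp},[I,\mathfrak g,\mathfrak g]\big)\subseteq q_{\mathfrak g}(I^{\perp},I)=0$, followed by nondegeneracy and $\alpha(\mathfrak g)=\mathfrak g$: it is precisely the \emph{second} $I$-entry $\alpha(I^{\perp})=\alpha(I)$ that lets both arguments of $q_{\mathfrak g}$ land in $I=I^{\perp}$ after the swap. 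A sanity check shows the stronger statement you set out to prove is false in general: take $\alpha=\beta=\mathrm{id}$ and $\mathfrak g=B\oplus B^{*}$ the $T^{*}_{0}$-extension (trivial cocycle, coadjoint action) of a non-abelian $3$-Lie (super)algebra $B$; then $I=B^{*}$ is an $n$-dimensional isotropic ideal, yet $[B^{*},B,B]=\mathrm{ad}^{*}(B,B)(B^{*})\neq0$. So the right move was to correct the target to $[\beta(I),\beta(\mathfrak g),\alpha(I)]=0$ (third slot restricted to $I$), after which your framework (Lagrangian $I$, ideal property, invariance, nondegeneracy, surjectivity of $\alpha$) closes the proof immediately and in essentially the paper's way.
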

\begin{proof}
Since dim$I$+dim$I^{\bot}=n+\dim I^{\bot}=2n$ and $I\subseteq I^{\bot}$, we have $I=I^{\bot}$.
If $I$ is a Bihom-ideal of $(\mathfrak{g},[\cdot,\cdot],\alpha,\beta)$, then \begin{eqnarray*}
q_\mathfrak{g}([\beta(I),\beta(\mathfrak{g}),\alpha(I^{\bot})],\alpha(\mathfrak{g}))&=&\pm q_\mathfrak{g}(\alpha(I^{\bot}),[\beta(I),\beta(\mathfrak{g}),\alpha(\mathfrak{g})])\\
&\subseteq&q_\mathfrak{g}(\alpha(I^{\bot}),[I,\beta(\mathfrak{g}),\alpha(\mathfrak{g})])\\
&\subseteq&q_\mathfrak{g}(I^{\bot},I)=0,
\end{eqnarray*}
which implies $[\beta(I),\beta(\mathfrak{g}),\alpha(I)]=[\beta(I),\beta(\mathfrak{g}),\alpha(I^{\bot})]\subseteq \alpha(\mathfrak{g})^{\bot}=\mathfrak{g}^{\bot}=0$.
\end{proof}

\begin{thm}
Let $(\mathfrak{g},q_\mathfrak{g},\alpha,\beta)$ be a quadratic regular $3$-Bihom-Lie superalgebra of dimensional $2n$ over a field $\mathbb{K}$ $(ch\mathbb{K}\neq2)$. Then $(\mathfrak{g},q_\mathfrak{g},\alpha,\beta)$ is isometric to a $T^{*}_\theta$-extension $(T_{\theta}^{*}(B),q_{B},\alpha^{'},\beta^{'})$ if and only if $(\mathfrak{g},[\cdot,\cdot,\cdot],\alpha,\beta)$ contains an isotropic Bihom-ideal $I$ of dimension $n$. In particular, $B\cong \mathfrak{g}/I$.
\end{thm}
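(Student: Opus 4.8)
The plan is to prove the two implications separately; the forward implication is essentially formal, while the converse requires reconstructing $\mathfrak g$ explicitly from the ideal.

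For the forward direction, assume there is an isometry $\psi\colon\mathfrak g\to T_\theta^*(B)=B\oplus B^*$, i.e.\ an isomorphism of $3$-Bihom-Lie superalgebras with $q_\mathfrak g(x,y)=q_B(\psi(x),\psi(y))$. I would check that $\widetilde I:=\{0\}\oplus B^*$ is an isotropic Bihom-ideal of $T_\theta^*(B)$ with $\dim\widetilde I=\dim B=n$: it is isotropic since $q_B(0+f,0+g)=f(0)+(-1)^{|f||g|}g(0)=0$; it is stable under $\alpha'+\widetilde{\alpha'}$ and $\beta'+\widetilde{\beta'}$ because $B^*$ is $\widetilde{\alpha'},\widetilde{\beta'}$-invariant; and it is an ideal because in $[u+x,v+y,w+z]_\theta$ the $B$-component equals $[u,v,w]_B$, which vanishes once any argument lies in $B^*$, while $\theta$ is trilinear and every $\ad^*$-term takes values in $B^*$. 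Then $I:=\psi^{-1}(\widetilde I)$ is an isotropic Bihom-ideal of $\mathfrak g$ of dimension $n$, and $\mathfrak g/I\cong(B\oplus B^*)/B^*\cong B$.

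For the converse, let $I\subseteq\mathfrak g$ be a $\mathbb Z_2$-graded isotropic Bihom-ideal with $\dim I=n$. Nondegeneracy of $q_\mathfrak g$ gives $\dim I+\dim I^\perp=2n$, so $I\subseteq I^\perp$ forces $I=I^\perp$, i.e.\ $I$ is Lagrangian; by Lemma \ref{lemma3.1} and regularity ($\beta(I)=I$, $\alpha(\mathfrak g)=\beta(\mathfrak g)=\mathfrak g$), together with $3$-Bihom-super-skewsymmetry, every bracket having at least two entries in $I$ vanishes. Since a nondegenerate even supersymmetric form admitting a Lagrangian is split in each parity ($\mathrm{ch}\,\mathbb K\neq2$, the odd part carrying an alternating form), I can choose a graded Lagrangian complement $B_0$ with $\mathfrak g=B_0\oplus I$, both summands isotropic. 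Put $B:=\mathfrak g/I$ with its quotient $3$-Bihom-Lie superalgebra structure and induced automorphisms $\alpha',\beta'$; then $p|_{B_0}\colon B_0\to B$ is a graded isomorphism and $q_\mathfrak g$ restricts to a perfect pairing $B_0\times I\to\mathbb K$, giving a graded isomorphism $\gamma\colon I\to B^*$, $\gamma(i)(p(b)):=q_\mathfrak g(i,b)$ for $b\in B_0$. Because $[I,I,\mathfrak g]=0$, the adjoint action of $\mathfrak g$ on the ideal $I$ factors through $B$, and $\alpha\beta$-invariance together with $f$-symmetry of $\alpha,\beta$ identify the resulting $B$-module $I$, up to the prescribed $\alpha,\beta$-twists, with the coadjoint module $(B^*,\ad^*)$; in particular $\ad^*$ is a representation of $B$. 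I then define $\theta(p(b_1),p(b_2),p(b_3)):=\gamma\bigl(\pi_I[b_1,b_2,b_3]\bigr)$ for $b_i\in B_0$, where $\pi_I$ is the projection onto $I$ along $B_0$; this is well defined, its $3$-cocycle conditions follow from the $3$-Bihom-super-Jacobi identity in $\mathfrak g$ and $\alpha\beta$-invariance, and the symmetry identity of Lemma \ref{lemma3.2} follows from supersymmetry and $\alpha\beta$-invariance of $q_\mathfrak g$, so that $T_\theta^*(B)$ is a genuine quadratic $3$-Bihom-Lie superalgebra.

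Finally I would verify that $\phi\colon\mathfrak g\to T_\theta^*(B)$, $\phi(b+i):=p(b)+\gamma(i)$ for $b\in B_0,\ i\in I$, does the job: it is a graded linear isomorphism intertwining $\alpha,\beta$ with $\alpha'+\widetilde{\alpha'},\beta'+\widetilde{\beta'}$ by construction; it is an isometry by a short computation using that $B_0,I$ are isotropic and the definition of $\gamma$; and it is an algebra homomorphism, matching $[\,\cdot,\cdot,\cdot\,]$ with $[\,\cdot,\cdot,\cdot\,]_\theta$ entry by entry. This yields that $\mathfrak g$ is isometric to $T_\theta^*(B)$ with $B\cong\mathfrak g/I$. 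I expect the homomorphism check to be the main obstacle: expanding $\phi([x,y,z])=[\phi(x),\phi(y),\phi(z)]_\theta$ with $x=b_1+i_1$ etc.\ splits into several sign-sensitive identities — that $\pi_I$ of the $\mathfrak g$-bracket reproduces $\theta$ on the $B_0$-parts, and that the action of $B_0$-representatives on $I$ matches $\ad^*$ after inserting the $\alpha\beta^{-1}$-twists appearing in $[\,\cdot,\cdot,\cdot\,]_\theta$ — each of which rests on $\alpha\beta$-invariance of $q_\mathfrak g$ and on $\alpha,\beta$ being $f$-symmetric automorphisms; checking the long fourth cocycle condition for $\theta$ is of comparable difficulty.
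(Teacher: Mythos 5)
Your proposal is correct and follows essentially the same route as the paper's proof: the forward direction identifies (the preimage of) $B^{*}$ as the $n$-dimensional isotropic Bihom-ideal, and the converse chooses an isotropic complement $B_{0}$, identifies $I$ with $B^{*}=(\mathfrak{g}/I)^{*}$ via the pairing $q_{\mathfrak{g}}$, defines $\theta$ as the $I$-component of the bracket of $B_{0}$-representatives, and checks that $x+i\mapsto \bar{x}+q_{\mathfrak{g}}^{*}(i)$ is an isometric isomorphism onto $T^{*}_{\theta}(\mathfrak{g}/I)$. The computations you defer (the $\mathrm{ad}^{*}$-compatibility of $q_{\mathfrak{g}}^{*}$, the cocycle and symmetry conditions for $\theta$, and the homomorphism check) are exactly the ones carried out in the paper.
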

\begin{proof}
($\Longrightarrow$) Suppose $\phi:B\oplus B^*\rightarrow \mathfrak{g}$ is isometric, we have $\phi(B^*)$ is a $n$-dimensional isotropic Bihom-ideal of $\mathfrak{g}$. In fact, since $\phi$ is isometric, ${\rm dim}B\oplus B^*={\rm dim}\mathfrak{g}=2n$, which implies ${\rm dim}B^*={\rm dim}\phi(B^*)=n$. And $0=q_B(B^*,B^*)=q_\mathfrak{g}(\phi(B^*),\phi(B^*))$, we have $\phi(B^*)\subseteq\phi(B^*)^{\bot}$. By $[\phi(B^*),\mathfrak{g},\mathfrak{g}]=[\phi(B^*),\phi(B\oplus B^*),\phi(B\oplus B^*)]=\phi([B^*,B\oplus B^*,B\oplus B^*]_\theta)\subseteq\phi(B^*)$, $\phi(B^*)$ is a Bihom-ideal of $\mathfrak{g}$. Furthermore, $B\cong B\oplus B^*/B^*\cong \mathfrak{g}/\phi(B^*)$.

($\Longleftarrow$) Suppose that $I$ is a $n$-dimensional isotropic $3$- Bihom-ideal of $\mathfrak{g}$. By Lemma \ref{lemma3.1},  $[\beta(I),\beta(\mathfrak{g}),\alpha(I)]=0$. Let $B=\mathfrak{g}/I$ and $p: \mathfrak{g} \rightarrow B$ be the canonical projection. We can choose an isotropic complement subspace $B_{0}$ to $I$ in $\mathfrak{g}$, i.e. $\mathfrak{g}=B_{0}\dotplus I$ and $B_{0}\subseteq B_{0}^{\bot}$. Then $B_{0}^{\bot}=B_{0}$ since dim$B_0=n$.

Denote by $p_{0}$ (resp. $p_{1}$) the projection $\mathfrak{g}=B_{0}\dotplus I \rightarrow B_{0}$ (resp. $\mathfrak{g}=B_{0}\dotplus I\rightarrow I$) and let $q_{\mathfrak{g}}^{*}:I \rightarrow B^{*}$ is a linear map, where $q_{\mathfrak{g}}^{*}(i)(\bar{x}):= q_{\mathfrak{g}}(i,x),~\forall\, i\in I, \bar{x}\in B=\mathfrak{g}/I$.
 We claim that $q_{\mathfrak{g}}^{*}$ is a vector space isomorphism. In fact, if $\bar{x}=\bar{y}$, then $x-y\in I$, hence $q_{\mathfrak{g}}(i,x-y)\in q_{\mathfrak{g}}(I,I)=0$ and
 so $q_{\mathfrak{g}}(i,x)=q_{\mathfrak{g}}(i,y)$, which implies $q_{\mathfrak{g}}^{*}$ is well-defined and it is easy to see that $q_{\mathfrak{g}}^{*}$ is linear. If
 $q_{\mathfrak{g}}^{*}(i)=q_{\mathfrak{g}}^{*}(j)$, then $q_{\mathfrak{g}}^{*}(i)(\bar{x})=q_{\mathfrak{g}}^{*}(j)(\bar{x}), ~\forall\, x\in \mathfrak{g}$, i.e. $q_{\mathfrak{g}}(i,x)=q_{\mathfrak{g}}(j,x)$,
 which implies $i-j\in \mathfrak{g}^\bot=0$, hence $q_{\mathfrak{g}}^{*}$ is injective. Note that $\dim I=\dim B^*=n$, then $q_{\mathfrak{g}}^{*}$ is surjective.

In addition, $q_{\mathfrak{g}}^{*}$ has the following property, $\forall\, x,y,z\in \mathfrak{g}, i\in I$,
\begin{eqnarray*}
q_{\mathfrak{g}}^{*}([\beta(x),\beta(y),\alpha(i)])(\bar{\alpha}(\bar{z}))
&=&q_{\mathfrak{g}}([\beta(x),\beta(y),\alpha(i)],\alpha(z))\\
&=&(-1)^{|i|(|x|+|y|)}q_{\mathfrak{g}}(\alpha(i),[\beta(x),\beta(y),\alpha(z)])\\
&=&(-1)^{|i|(|x|+|y|)}q_{\mathfrak{g}}^{*}(\alpha(i))(\overline{[\beta(x),\beta(y),\alpha(z)]})\\
&=&(-1)^{|i|(|x|+|y|)}q_{\mathfrak{g}}^{*}(\alpha(i))([\overline{\beta(x)},\overline{\beta(y)},\overline{\alpha(z)}])\\
&=&(-1)^{|i|(|x|+|y|)}q_{\mathfrak{g}}^{*}(\alpha(i))\ad(\overline{\beta(x)},\overline{\beta(y)})(\overline{\alpha(z)})\\
&=&\ad^*(\overline{\beta(x)},\overline{\beta(y)})q_{\mathfrak{g}}^{*}(\alpha(i))(\overline{\alpha(z)}).
\end{eqnarray*}
A similar computation shows that
$$q_{\mathfrak{g}}^{*}([\beta(x),\beta(i),\alpha(y)])=-(-1)^{|y||i|}\ad^*(\overline{\beta(x)},\overline{\beta(y)})q_{\mathfrak{g}}^{*}(\alpha(i)),$$
$$q_{\mathfrak{g}}^{*}([\beta(i),\beta(x),\alpha(y)])=(-1)^{(|x|+|y|)|i|}\ad^*(\overline{\beta(x)},\overline{\beta(y)})q_{\mathfrak{g}}^{*}(\alpha(i)).$$
Define a $3$-linear map
\begin{eqnarray*}
\theta:~~~B\times B\times B&\longrightarrow&B^{*}\\
(\bar{b_1},\bar{b_2},\bar{b_3})&\longmapsto&q_{\mathfrak{g}}^{*}(p_{1}([b_1,b_2,b_3])),
\end{eqnarray*}
where $b_1,b_2,b_3\in B_{0}.$ Then $\theta$ is well-defined since $p|_{B_0}$ is a vector space isomorphism.

Now define the bracket $[\cdot,\cdot,\cdot]_\theta$ on $B\oplus B^*$ by Proposition \ref{propk}, we have $B\oplus B^*$
is a algebra. Let $\varphi:\mathfrak{g} \rightarrow B\oplus B^{*}$ be a linear map defined by $\varphi(x+i)=\bar{x}+q_{\mathfrak{g}}^{*}(i),~ \forall\, x+i\in B_0\dotplus I=\mathfrak{g}. $ Since $p|_{B_0}$ and $q_{\mathfrak{g}}^{*}$ are vector space isomorphisms, $\varphi$ is also a vector space isomorphism. Note that $\varphi\alpha(x+i)=\varphi(\alpha(x)+\alpha(i))=\overline{\alpha(x)}+q_{\mathfrak{g}}^{*}(\alpha(i))=\overline{\alpha(x)}+q_{\mathfrak{g}}^{*}(i)\bar{\alpha}
 =(\bar{\alpha}+\tilde{\bar{\alpha}})(\bar{x}+q_{\mathfrak{g}}^{*}(i))=(\bar{\alpha}+\tilde{\bar{\alpha}})\varphi(x+i)$, i.e. $\varphi\alpha=(\bar{\alpha}+\tilde{\bar{\alpha}})\varphi$. By the same way, $\varphi\beta=(\bar{\beta}+\tilde{\bar{\beta}})\varphi$. Furthermore, $\forall\, x,y,z\in \mathfrak{g}$, $i,j,k\in I$,

$\begin{array}{lllll}
&&\varphi([\beta(x+i),\beta(y+j),\alpha(z+k)])\\
&=&\varphi([\beta(x)+\beta(i),\beta(y)+\beta(j),\alpha(z)+\alpha(k)])\\
&=&\varphi([\beta(x),\beta(y),\alpha(z)]+[\beta(x),\beta(y),\alpha(k)]+[\beta(x),\beta(j),\alpha(z)]+[\beta(x),\beta(j),\alpha(k)]\\
&&+[\beta(i),\beta(y),\alpha(z)]
+[\beta(i),\beta(y),\alpha(k)]+[\beta(i),\beta(j),\alpha(z)]+[\beta(i),\beta(j),\alpha(k)])\\
&=&\varphi([\beta(x),\beta(y),\alpha(z)]+[\beta(x),\beta(y),\alpha(k)]+[\beta(x),\beta(j),\alpha(z)]+[\beta(i),\beta(y),\alpha(z)])\\
&=&\varphi(p_{0}([\beta(x),\beta(y),\alpha(z)])\!+\!p_{1}([\beta(x),\beta(y),\alpha(z)])\!+\![\beta(x),\beta(y),\alpha(k)]\!+\![\beta(x),\beta(j),\alpha(z)]\\
&&+[\beta(i),\beta(y),\alpha(z)])\\
&=&\overline{[\beta(x),\beta(y),\alpha(z)]}+q^*_\mathfrak{g}\big(p_{1}([\beta(x),\beta(y),\alpha(z)])+[\beta(x),\beta(y),\alpha(k)]+[\beta(x),\beta(j),\alpha(z)]\\
&&+[\beta(i),\beta(y),\alpha(z)]\big)
\end{array}$
$\begin{array}{lllll}
&=&\overline{[\beta(x),\beta(y),\alpha(z)]}+\theta(\overline{\beta(x)},\overline{\beta(y)},\overline{\alpha(z)})+\ad^*(\overline{\beta(x)},\overline{\beta(y)})q_{\mathfrak{g}}^{*}(\alpha(k))\\
&&-(-1)^{|j||z|}\ad^*(\overline{\beta(x)},\overline{\beta(z)})q_{\mathfrak{g}}^{*}(\alpha(j))+(-1)^{|i|(|y|+|z|)}\ad^*(\overline{\beta(y)},\overline{\beta(z)})q_{\mathfrak{g}}^{*}(\alpha(i))\\
&=&[\overline{\beta(x)}+q_{\mathfrak{g}}^{*}(\beta(i)),\overline{\beta(y)}+q_{\mathfrak{g}}^{*}(\beta(j)),\overline{\alpha(z)}+q_{\mathfrak{g}}^{*}(\alpha(k))]_\theta\\
&=&[\varphi(\beta(x)+\beta(i)),\varphi(\beta(y)+\beta(j)),\varphi(\alpha(z)+\alpha(k))]_\theta.
\end{array}$
Then $\varphi$ is an isomorphism of superalgebras, and $(B\oplus B^{*},[\cdot,\cdot,\cdot]_\theta, \bar{\alpha}+\tilde{\bar{\alpha}},\bar{\beta}+\tilde{\bar{\beta}})$ is a $3$-Bihom-Lie superalgebra.
Furthermore, we have
{\setlength\arraycolsep{2pt}
\begin{eqnarray*}
q_{B}(\varphi(x+i),\varphi(y+j))&=&q_{B}(\bar{x}+q_{\mathfrak{g}}^{*}(i),\bar{y}+q_{\mathfrak{g}}^{*}(j))\\
&=&q_{\mathfrak{g}}^{*}(i)(\bar{y})+q_{\mathfrak{g}}^{*}(j)(\bar{x})\\
&=&q_{\mathfrak{g}}(i,y)+q_{\mathfrak{g}}(j,x)\\
&=&q_{\mathfrak{g}}(x+i,y+j),
\end{eqnarray*}}
then $\varphi$ is isometric. And $\forall\, x,y,z,w\in \mathfrak{g}$, the relation
\begin{eqnarray*}
&&q_{B}([(\bar{\beta}+\tilde{\bar{\beta}})(\varphi(x)),(\bar{\beta}+\tilde{\bar{\beta}})(\varphi(y)),(\bar{\alpha}+\tilde{\bar{\alpha}})(\varphi(z))]_\theta,(\bar{\alpha}+\tilde{\bar{\alpha}})(\varphi(w)))\\
&=&q_{B}([\varphi(\beta(x)),\varphi(\beta(y)),\varphi(\alpha(z))]_\theta,\varphi(\alpha(w)))=q_B(\varphi([\beta(x),\beta(y),\alpha(z)]),\varphi(\alpha(w)))\\
&=&q_{\mathfrak{g}}([\beta(x),\beta(y),\alpha(z)],\alpha(w))=-(-1)^{|z|(|x|+|y|)}q_{\mathfrak{g}}(\alpha(z),[\beta(x),\beta(y),\alpha(w)])\\
&=&-(-1)^{|z|(|x|+|y|)}q_{B}(\varphi(\alpha(z)),[\varphi(\beta(x)),\varphi(\beta(y)),\varphi(\alpha(w))]_\theta)\\
&=&-(-1)^{|z|(|x|+|y|)}q_{B}((\bar{\beta}+\tilde{\bar{\beta}})(\varphi(z)),[(\bar{\beta}+\tilde{\bar{\beta}})(\varphi(x)),(\bar{\beta}+\tilde{\bar{\beta}})(\varphi(y)),(\bar{\alpha}+\tilde{\bar{\alpha}})(\varphi(w))]_\theta),
\end{eqnarray*}
 which implies that $q_B$ is $\alpha\beta$-invariant. So $(B\oplus B^{*}, q_B,\bar{\beta}+\tilde{\bar{\beta}},\bar{\alpha}+\tilde{\bar{\alpha}})$ is a quadratic $3$-Bihom-Lie superalgebra.
Thus, the $T_\theta^*$-extension $(B\oplus B^{*}, q_B,\bar{\beta}+\tilde{\bar{\beta}},\bar{\alpha}+\tilde{\bar{\alpha}})$ of $B$ is isometric to $(\mathfrak{g},q_{\mathfrak{g}},\alpha,\beta)$.
\end{proof}




\end{document}